\newtheorem{thm}{Theorem}[section]
\newtheorem{lem}[thm]{Lemma}
\newtheorem{prop}[thm]{Proposition}
\newtheorem{cor}[thm]{Corollary}
\numberwithin{equation}{section}
\newcommand{\Aut}{\operatorname{Aut}}
\title{Distinguishing chromatic number of Hamiltonian circulant graphs}
\author{Michael D. Barrus, Jean Guillaume, Benjamin Lantz}
\begin{document}

\maketitle

\begin{abstract}
The distinguishing chromatic number of a graph $G$ is the smallest number of colors needed to properly color the vertices of $G$ so that the trivial automorphism is the only symmetry of $G$ that preserves the coloring. We investigate the distinguishing chromatic number for Hamiltonian circulant graphs with maximum degree at most 4.
\end{abstract}

\section{Introduction and definitions} \label{sec: intro}
The \emph{distinguishing chromatic number} of a graph $G$, introduced by Collins and Trenk~\cite{CollinsTrenk06} and denoted by $\chi_D(G)$, is the minimum number of colors needed for a proper coloring of the vertices of $G$ such that the only automorphism of $G$ that preserves the coloring is the trivial automorphism. Any such coloring using $r$ colors is called an \emph{$r$-distinguishing} coloring, or simply a \emph{distinguishing coloring} if the value of $r$ is unimportant.

The distinguishing chromatic number was introduced as a proper-coloring analogue of the \emph{distinguishing number} of a graph defined by Albertson and Collins~\cite{AlbertsonCollins96}, which measures the difficulty of breaking symmetries in graphs by coloring vertices (i.e., providing a distinguishing coloring), though without requiring a proper coloring. In this paper, all distinguishing colorings will be assumed to be proper colorings.

In~\cite{CollinsTrenk06}, Collins and Trenk observed that both the ordinary chromatic number $\chi(G)$ and the distinguishing number $D(G)$ serve as lower bounds for $\chi_D(G)$. They proved that $\chi_D(G) = |V(G)|$ if and only if $G$ is a complete multipartite graph, and they also determined $\chi_D(G)$ for various classes of graphs $G$. In particular, they showed the following.

\begin{thm}[\cite{CollinsTrenk06}] \label{thm: cycles}
For $n \geq 3$, the distinguishing chromatic number of $C_n$ is given by
\[
\chi_D(C_n) = \begin{cases}
3 & \text{ if }n \in \{3,5\} \text{ or if }n \geq 7;\\
4 & \text{ if }n \in \{4,6\}.
\end{cases}
\]
\end{thm}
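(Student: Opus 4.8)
The plan is to first pin down the lower bound $\chi_D(C_n)\ge 3$ for all $n\ge 3$, and then to supply colorings that meet the claimed values. When $n$ is odd, $\chi_D(C_n)\ge\chi(C_n)=3$, so nothing is needed. When $n$ is even, every proper $2$-coloring of $C_n$ is one of the two alternating colorings determined by the bipartition into even- and odd-indexed vertices, and each of these is invariant under the nontrivial rotation $i\mapsto i+2$; hence $\chi_D(C_n)\ge 3$ here too. With this reduction, it remains to (i) construct a proper $3$-distinguishing coloring when $n\in\{3,5\}$ or $n\ge 7$, and (ii) prove $\chi_D(C_4)=\chi_D(C_6)=4$.

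For (i) with $n$ odd, I would color the vertices $0,1,\dots,n-1$ by assigning color $3$ to vertex $n-1$ and, for $0\le i\le n-2$, color $1$ to $i$ if $i$ is even and color $2$ if $i$ is odd; since $n-2$ is odd this is proper. Color $3$ is used once, so any color-preserving automorphism fixes $n-1$, and the only nontrivial automorphism fixing $n-1$ is the reflection through it, which sends $i\mapsto n-2-i\pmod n$ and therefore reverses the parity of every vertex in $\{0,\dots,n-2\}$, which breaks the alternation of colors $1$ and $2$. For (i) with $n$ even and $n\ge 8$, I would instead give color $3$ to the two vertices $0$ and $3$ and color all other vertices by parity as above; this is proper because $0$ and $3$ are nonadjacent. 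Any color-preserving automorphism must carry $\{0,3\}$ to itself, so it fixes both $0$ and $3$ or swaps them; for $n\ge 8$ the only rotation doing either is the identity, no reflection fixes both $0$ and $3$ (they are not antipodal), and the unique reflection swapping $0$ and $3$, namely $i\mapsto 3-i\pmod n$, reverses parity and so does not preserve the background. Choosing the two special vertices at distance exactly $3$ is what lets one construction serve every even $n\ge 8$ at once; a more symmetric defect can be stabilized by a reflection for small $n$.

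For (ii), the case $n=4$ is immediate: $C_4=K_{2,2}$ is complete multipartite, so $\chi_D(C_4)=|V(C_4)|=4$ by the theorem of Collins and Trenk~\cite{CollinsTrenk06}. For $n=6$, coloring the vertices $0,\dots,5$ by $1,2,1,2,3,4$ is proper and uses colors $3$ and $4$ exactly once, at two vertices that are not antipodal; since no nontrivial automorphism of $C_6$ fixes two non-antipodal vertices, this coloring is $4$-distinguishing, so $\chi_D(C_6)\le 4$. For the matching lower bound, one checks that no proper coloring of $C_6$ with three colors is distinguishing. A proper coloring using at most two colors is alternating and is handled as above, so suppose all three colors appear; as each color class is an independent set of size at most $3$, the class sizes are $(3,2,1)$ or $(2,2,2)$. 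If the sizes are $(3,2,1)$, the size-$3$ class is one side of the bipartition, the singleton vertex $v$ lies on the other side, and the reflection fixing $v$ and its antipode preserves all three classes. If the sizes are $(2,2,2)$, then — since the distance-$2$ graph of $C_6$ consists of two triangles and has no perfect matching — some color class is an antipodal pair $\{j,j+3\}$, and then either the rotation $i\mapsto i+3$ or the reflection through $j$ and $j+3$ preserves the coloring.

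The main obstacle is the lower bound $\chi_D(C_6)\ge 4$: it is the one place where the argument must inspect the possible color-class structures rather than invoke a single principle. Confirming that the distance-$3$ defect is genuinely asymmetric for every even $n\ge 8$ also takes a moment's care, but the remaining checks — properness, and the counting of fixed points and orbits of dihedral elements — are routine.
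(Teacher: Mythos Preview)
The paper does not actually prove this theorem; it is quoted from Collins and Trenk~\cite{CollinsTrenk06} without proof, so there is no argument in the paper to compare against. Your proof is correct and complete: the lower bound $\chi_D(C_n)\ge 3$ is handled cleanly, the odd-$n$ and even-$n\ge 8$ constructions are proper and genuinely rigid under the dihedral group (your check that the reflection $i\mapsto 3-i$ reverses parity is the key point for the even case), and the case analysis for $C_6$ by color-class sizes is sound---in particular, the observation that the distance-$2$ graph of $C_6$ is two disjoint triangles and hence has no perfect matching is a nice way to force an antipodal class in the $(2,2,2)$ situation.

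It is worth noting that the strategy you use---introduce a small asymmetric ``defect'' into an otherwise periodic proper coloring and then verify that no surviving dihedral symmetry can respect it---is precisely the technique the paper deploys repeatedly in Sections~\ref{sec: trivalent} through~\ref{sec: k^2 = pm 1} for the more elaborate circulants $C_n(1,k)$, so your proof is entirely in the spirit of the surrounding material even though the paper itself supplies nothing here.
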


The graph $C_6$ is one example where $\chi_D$ can be strictly greater than both the chromatic number and the distinguishing number (both equal 2 for $C_6$).

In this paper we consider circulant graphs, i.e., those undirected graphs with vertices $v_0,\dots,v_{n-1}$ where edges join any two vertices having indices with a difference (in either order) modulo $n$ lying in a given set of positive numbers. If this set of differences is denoted by $D$, then we denote such a graph by $C_n(D)$, and if $D=\{d_1,\dots,d_s\}$, we write the graph as $C_n(d_1,\dots,d_s)$.

For example, the cycle graph $C_n$ is equivalent to the circulant graph $C_n(1)$. 
Note that the notation allows for multiple representations for a single graph, since differences may be computed in the opposite order and are reduced modulo $n$. For example, the cycle $C_n$ is also equivalent to 
$C_n(n-1)$, and we may replace any element $k$ in the set of allowed differences by $n-k$ without changing the graph. Unless otherwise specified, we will assume here that for an $n$-vertex circulant graph, each difference belongs to $\{1,\dots,\lfloor{n/2\rfloor}\}$.

We will extend Theorem~\ref{thm: cycles} by determining the distinguishing chromatic number for various classes of Hamiltonian circulant graphs with maximum degree at most 4. As we will see, in most cases these graphs are similar enough to cycles that the distinguishing chromatic number is 3, and in no infinite family does the number ever rise higher than 5. In particular, we show that for any tetravalent graph $C_n(1, k),$ where $k \neq n/2, n/2 -1,$ and $(n, k) \neq (10, 3),$ the distinguishing chromatic number is at most one more than the chromatic number. As a prelude, here is a summary of our main results. 

\begin{thm} \label{thm:summary}
    Let $k,n$ be positive integers such that $2 \leq k \leq \lfloor n/2 \rfloor$. The distinguishing chromatic number of $C_n(1,k)$ is given by \[\chi_D(C_n(1,k)) = \begin{cases}
    n & \text{ if $(n, k) = (4, 2), (5, 2), (6, 2), (6, 3), (8, 3);$}\\
    5 & \text{ if $(n, k) = (10, 3);$}\\
    4 & \text{ if $(n, k) = (15, 4), (13, 5);$}\\
    3 & \text{ if $k =n/2$ and $n \geq 8;$}\\
    5 & \text{ if $k =n/2 -1$ and $n \geq 10;$ }\\
    4 & \text{ if $k=2$ or $k= (n-1)/2,$ and $n \geq 7;$}\\
    3 & otherwise.
    \end{cases}\]
\end{thm}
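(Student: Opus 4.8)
The proof is a case analysis following the seven branches of the statement, resting on three tools: the general lower bounds $\chi_D(G)\ge\chi(G)$ and $\chi_D(G)\ge D(G)$; the Collins--Trenk theorem~\cite{CollinsTrenk06} that $\chi_D(G)=|V(G)|$ if and only if $G$ is complete multipartite; and, for the infinite families, explicit distinguishing colorings together with matching structural lower bounds. Two reductions help organize things. First, multiplying differences by the unit $(n+1)/2$ shows $C_n(1,2)\cong C_n(1,(n-1)/2)$ for odd $n$, so those two descriptions refer to the same graphs. Second, $\Aut(C_n(1,k))\supseteq D_n$ always, and the only members of the family with a strictly larger automorphism group are the complete multipartite ones and those admitting the extra multiplier $v_i\mapsto v_{ki}$, which occurs exactly when $k^2\equiv\pm1\pmod n$; identifying these flags precisely which symmetries a distinguishing coloring must destroy.

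I would begin with the finite part. Listing the complete multipartite members --- $C_4(1,2)=K_4$, $C_5(1,2)=K_5$, $C_6(1,2)=K_{2,2,2}$, $C_6(1,3)=K_{3,3}$, $C_8(1,3)=K_{4,4}$ --- gives, by Collins--Trenk, exactly the pairs with $\chi_D=n$. For the sporadic pairs $(10,3)$, $(15,4)$, $(13,5)$, each graph carries an extra multiplier automorphism (these are among the $(n,k)$ with $k^2\equiv\pm1\pmod n$); I would argue directly that no proper coloring with one fewer color than claimed can break the resulting enlarged automorphism group --- the proper colorings with so few colors turn out to be too rigid, so some multiplier or reflection always survives --- and then display an optimal distinguishing coloring.

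For the infinite families I would prove upper and lower bounds separately. All upper bounds come from one template: color $v_0,\dots,v_{n-1}$ along the Hamiltonian cycle of difference-$1$ edges by a short almost-periodic pattern, then perturb a bounded number of vertices so that the color classes (their sizes and relative positions) admit no nontrivial automorphism; the difference-$k$ edges merely restrict which patterns are admissible, and checking properness and asymmetry splits into a handful of subcases by the residue of $n$ modulo a small integer and the parity of $k$. This produces a $3$-distinguishing coloring generically (which in particular shows $\chi(C_n(1,k))\le3$ there) and for $k=n/2$, a $4$-distinguishing coloring for $k=2$, and a $5$-distinguishing coloring for $k=n/2-1$. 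For the lower bounds: generically and for $k=n/2$, either the graph is non-bipartite and $\chi_D\ge\chi\ge3$, or it is bipartite and its essentially unique proper $2$-coloring is fixed by the rotation $v_i\mapsto v_{i+2}$, so $\chi_D\ge3$ either way; for $k=2$ the graph is $C_n^2$, in which every three consecutive vertices form a triangle, forcing any proper $3$-coloring to be the period-$3$ pattern, which exists only when $3\mid n$ and is then fixed by $v_i\mapsto v_{i+3}$, so $\chi_D(C_n^2)\ge4$ (matching $\chi=4$ when $3\nmid n$).

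The crux, and the step I expect to be the main obstacle, is the lower bound $\chi_D\bigl(C_n(1,n/2-1)\bigr)\ge5$ for even $n\ge10$, i.e.\ showing that \emph{every} proper $4$-coloring admits a nontrivial color-preserving automorphism. Here $v_i$ is adjacent to $v_{i\pm1}$ and to $v_{i+n/2\pm1}$, so the graph is laced together by the overlapping $4$-cycles $v_iv_{i+1}v_{i+n/2+1}v_{i+n/2}$; I would combine the description of $\Aut$ (equal to $D_n$ when $n/2$ is odd, enlarged by the multiplier $v_i\mapsto v_{(n/2-1)i}$ when $n/2$ is even, since then $(n/2-1)^2\equiv1\pmod n$) with a careful enumeration of how a $4$-coloring can restrict to these $4$-cycles, aiming for a pigeonhole argument that forces a repeated colored configuration carried onto itself by a suitable translation or reflection. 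Making this work uniformly across both parities of $n/2$ --- where the symmetry available to be broken differs --- is the delicate point; the rigidity arguments for the sporadic pairs serve as warm-ups, and the easy companion facts that $4$ colors distinguish while $3$ do not then pin the value at exactly $5$.
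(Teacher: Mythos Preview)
Your overall strategy aligns with the paper's: handle the finite sporadic cases via Collins--Trenk and direct analysis, then prove matching upper and lower bounds for the infinite families by explicit colorings and structural arguments. However, there is a genuine gap in your treatment of $k=n/2-1$, precisely the case you flag as the crux.

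You assert that $\Aut(C_n(1,n/2-1))$ equals $D_n$ when $n/2$ is odd and is $D_n$ extended by a single multiplier when $n/2$ is even. This is false. The structural fact you have overlooked is that $v_i$ and $v_{i+n/2}$ are \emph{twins}: both have neighborhood $\{v_{i-1},v_{i+1},v_{i+n/2-1},v_{i+n/2+1}\}$. Hence for every $i$ the transposition swapping $v_i$ with $v_{i+n/2}$ and fixing all other vertices is an automorphism, so $|\Aut(C_n(1,n/2-1))|=2^{n/2}\cdot|D_{n/2}|$, far larger than $2n$ or $4n$. (The graph is the wreath graph $W(n/2,2)$; this is also why it falls under branch~(2), not branch~(1), of Theorem~\ref{thm: PW automorphisms}.)

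This omission undermines your lower-bound plan: a pigeonhole argument aimed only at dihedral symmetries and one multiplier cannot rule out the $2^{n/2}$ twin-swaps, and it is exactly these that force the bound up to $5$. The paper's argument exploits the twin structure directly. Since twins must receive distinct colors in any distinguishing coloring, each pair $\{v_i,v_{i+n/2}\}$ carries an unordered $2$-subset of colors, and adjacency of consecutive pairs forces these $2$-subsets to be disjoint. A proper $c$-coloring therefore corresponds to a closed walk of length $n/2$ in the Kneser graph $KG_{c,2}$. For $c=4$, $KG_{4,2}$ is a perfect matching on six vertices, so no proper $4$-coloring exists when $n/2$ is odd, and when $n/2$ is even the only possibility (up to renaming) is periodic and preserved by $v_i\mapsto v_{i+2}$. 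Thus $\chi_D\ge 5$, and the upper bound follows from an explicit walk in $KG_{5,2}$ (the Petersen graph) in which several color-pairs occur exactly once.

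With this correction in place, the remainder of your outline matches the paper's approach in spirit, though the paper develops the generic $3$-coloring constructions in substantially more case-by-case detail (via block patterns depending on the parities of $n$, $k$, and the residue in $n=qk+r$) than your single-template sketch indicates.
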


By otherwise, we mean all tetravalent circulant graphs $C_n(1, k)$ such that $k \neq 2, (n-1)/2,$ and $n/2 -1,$ and $(n, k) \neq (10, 3), (15, 4), (13, 5), (5, 2),$ and $(8, 3).$ The circulant graphs $C_n(1,k)$ such that $(n, k)= (4, 2), (5, 2), (6, 3), (8, 3)$ are complete graphs or complete bipartite graphs. Thus, results hold by~\cite{CollinsTrenk06}. As for graph $C_6(1, 2),$ the following arguments can be used to show that the distinguishing number is 6: vertices $v_iv_{i+2}v_{i-2}$ form an induced triangle and antipodal pairs $v_iv_{i+3}, v_{i+2}v_{i+5}, v_{i-2}v_{i-5}$ must have different labels. The remaining results in Theorem~\ref{thm:summary} are shown in Theorems~\ref{thm: Mobius}, ~\ref{thm: even n and k= n/2-1},~\ref{thm: k= 2},~\ref{thm: n even, k even},~\ref{thm:both odd},~\ref{thm:n odd and k even}, and Propositions~\ref{thm: n=10, k=3},~\ref{thm: nondihedral}.

The following sections are organized as follows. In Section~\ref{sec: isos}, we recall facts about isomorphisms and proper colorings of circulant graphs. In Section~\ref{sec: trivalent}, we determine $\chi_D$ for the trivalent Hamiltonian circulant graphs, also known as the M\"obius ladders. In Section~\ref{sec: automorphism groups}, we discuss the automorphism of the tetravalent graphs $C_n(1,k).$ In Section~\ref{sec: k = n/2-1}, we give an optimal distinguishing proper coloring of $C_n(1, n/2 -1).$ In Sections~\ref{sec: chi plus 1},~\ref{sec: dihedral symmetries}, and~\ref{sec: k^2 = pm 1}, we prove that the distinguishing chromatic number of tetravalent graphs $C_n(1, k),$  where $k \neq n/2, n/2 -1,$ and $(n, k) \neq (10, 3),$ is at most 1 more than the ordinary chromatic number.

Throughout this paper, we will use $V(G)$ and $E(G)$ to denote the vertex and edge sets of a graph $G$.

\section{Isomorphisms and colorings of circulant graphs} \label{sec: isos}
The results of this paper will deal principally with circulant graphs having the form $C_n(1,k)$ for various $n,k$; the inclusion of 1 as one of the differences forces the graph to be Hamiltonian. Though not investigated in this paper, observe that circular graphs with different differences sets can also be Hamiltonian, as shown by the example $C_6(2,3)$, which is isomorphic to the triangular prism and is not isomorphic to  $C_6(1,k)$ for any $k$. 

\begin{figure} 
\centering
    \begin{tikzpicture}
        \node at (90:3) {$C_{7}(1,2)$};
        \node (0) [draw,shape=circle, fill=black, scale=.4] at (90:2) {};
        \node at (90:2.3) {$v_0$};
        \node (1) [draw,shape=circle, fill=black, scale=.4] at (39:2) {};
        \node at (39:2.3) {$v_1$};
        \node (2) [draw,shape=circle, fill=black, scale=.4] at (347:2) {};
        \node at (347:2.3) {$v_2$};
        \node (3) [draw,shape=circle, fill=black, scale=.4] at (296:2) {};
        \node at (296:2.3) {$v_3$};
        \node (4) [draw,shape=circle, fill=black, scale=.4] at (244:2) {};
        \node at (244:2.3) {$v_4$};
        \node (5) [draw,shape=circle, fill=black, scale=.4] at (192:2) {};
        \node at (192:2.3) {$v_5$};
        \node (6) [draw,shape=circle, fill=black, scale=.4] at (141:2) {};
        \node at (141:2.3) {$v_6$};
        \draw (0)--(1)--(2)--(3)--(4)--(5)--(6)--(0)--(2)--(4)--(6)--(1)--(3)--(5)--(0);
    \end{tikzpicture}
    \begin{tikzpicture}
        \node at (90:3) {$C_{7}(1,3)$};
        \node (0) [draw,shape=circle, fill=black, scale=.4] at (90:2) {};
        \node at (90:2.3) {$v_0$};
        \node (3) [draw,shape=circle, fill=black, scale=.4] at (39:2) {};
        \node at (39:2.3) {$v_3$};
        \node (6) [draw,shape=circle, fill=black, scale=.4] at (347:2) {};
        \node at (347:2.3) {$v_6$};
        \node (2) [draw,shape=circle, fill=black, scale=.4] at (296:2) {};
        \node at (296:2.3) {$v_2$};
        \node (5) [draw,shape=circle, fill=black, scale=.4] at (244:2) {};
        \node at (244:2.3) {$v_5$};
        \node (1) [draw,shape=circle, fill=black, scale=.4] at (192:2) {};
        \node at (192:2.3) {$v_1$};
        \node (4) [draw,shape=circle, fill=black, scale=.4] at (141:2) {};
        \node at (141:2.3) {$v_4$};
        \draw (0)--(1)--(2)--(3)--(4)--(5)--(6)--(0)--(3)--(6)--(2)--(5)--(1)--(4)--(0);
    \end{tikzpicture}
    \caption{Isomorphic graphs $C_7(1,2)$ and $C_7(1,3)$.}
    \label{fig: isos}
\end{figure}
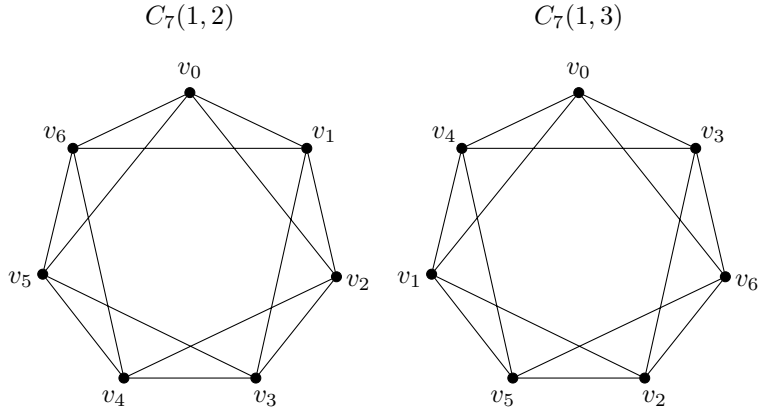

Recall from Section 1 that isomorphic circulant graphs can have multiple representations using distinct sets of differences. Besides the examples given there, observe that the graph $C_5$ is isomorphic to both  $C_5(1)$ and to $C_5(2)$, and Figure~\ref{fig: isos} shows that $C_{7}(1,2)$ is isomorphic to $C_{7}(1,3)$. We may simplify cases in what follows, and extend our results about graphs $C_n(1,k)$ to isomorphic trivalent or tetravalent circulant graphs not expressed in this way by recalling a result of \'Ad\'am~\cite{Adam67}.

\begin{thm}[\cite{Adam67}] \label{thm: iso}
If $\gcd(n,p)=1$, then $C_n(a_1, \cdots, a_t) \cong C_n(pa_1, \cdots, pa_t)$, where multiplication is performed modulo $n$.
\end{thm}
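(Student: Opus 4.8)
The plan is to write down an explicit isomorphism given by multiplying vertex indices by $p$. Put $G = C_n(a_1,\dots,a_t)$ on vertex set $\{v_0,\dots,v_{n-1}\}$ with subscripts read modulo $n$, so that $v_i \sim v_j$ precisely when $i - j \equiv \pm a_\ell \pmod{n}$ for some $\ell$, and similarly put $H = C_n(pa_1,\dots,pa_t)$ on vertex set $\{u_0,\dots,u_{n-1}\}$ with $u_i \sim u_j$ precisely when $i - j \equiv \pm pa_\ell \pmod{n}$ for some $\ell$; since $\gcd(n,p)=1$ and each $a_\ell \not\equiv 0 \pmod n$, none of the residues $pa_\ell$ is $0$, so $H$ is a genuine circulant graph. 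I would then define $\varphi \colon V(G) \to V(H)$ by $\varphi(v_i) = u_{pi \bmod n}$.

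First I would check that $\varphi$ is well defined and bijective: if $i \equiv i' \pmod n$ then $pi \equiv pi' \pmod n$, so the rule does not depend on the chosen representative, and because $\gcd(n,p)=1$ there is a multiplicative inverse $q = p^{-1}$ in $\mathbb{Z}_n$, whence $u_m \mapsto v_{qm \bmod n}$ is a two-sided inverse of $\varphi$. Next I would verify that $\varphi$ preserves and reflects adjacency. If $v_i \sim v_j$ in $G$, then $i - j \equiv \pm a_\ell \pmod n$ for some $\ell$; multiplying through by $p$ gives $pi - pj \equiv \pm pa_\ell \pmod n$, which is exactly the condition for $u_{pi} \sim u_{pj}$ in $H$. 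Conversely, if $u_{pi} \sim u_{pj}$ in $H$, then $p(i-j) \equiv \pm pa_\ell \pmod n$ for some $\ell$, and multiplying by $q$ yields $i - j \equiv \pm a_\ell \pmod n$, so $v_i \sim v_j$ in $G$. Hence $\varphi$ is an isomorphism and $G \cong H$.

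There is no real obstacle in this argument; the only thing to watch is the bookkeeping forced by the conventions of Section~\ref{sec: intro}. After forming the residues $pa_\ell$ one may need to reduce modulo $n$ and replace a difference $d$ by $n - d$ to land back in $\{1,\dots,\lfloor n/2 \rfloor\}$, and one should note that distinct differences remain distinct after multiplication by the unit $p$, since $pa_\ell \equiv \pm pa_m \pmod n$ would force $a_\ell \equiv \pm a_m \pmod n$. None of this affects the isomorphism, because adjacency in $C_n(a_1,\dots,a_t)$ depends only on the symmetric subset $\{\pm a_1,\dots,\pm a_t\}$ of residues modulo $n$, and multiplication by the unit $p$ carries this subset bijectively onto $\{\pm pa_1,\dots,\pm pa_t\}$. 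In short, the entire content of the theorem is that multiplication by a unit of $\mathbb{Z}_n$ transports a circulant graph isomorphically onto the circulant graph whose connection set is the image of the original connection set.
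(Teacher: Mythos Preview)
The paper does not prove this theorem; it is quoted as a result of \'Ad\'am and stated without proof. Your argument is the standard one and is correct: multiplication by a unit $p$ of $\mathbb{Z}_n$ is a bijection on vertex indices that carries the connection set $\{\pm a_1,\dots,\pm a_t\}$ bijectively onto $\{\pm pa_1,\dots,\pm pa_t\}$, so adjacency is preserved and reflected. Your remarks about normalizing each $pa_\ell$ back into $\{1,\dots,\lfloor n/2\rfloor\}$ are appropriate given the paper's conventions and do not affect the argument.
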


We specialize this result to the graphs of the form $C_n(1,k)$.

\begin{cor}
If either $a$ or $b$ is relatively prime to $n$, then $C_n(a,b) = C_n(1,k)$ for some $k \in \{1,\dots,\lfloor{n/2}\rfloor\}$.
\end{cor}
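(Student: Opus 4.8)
The plan is to deduce the corollary directly from \'Ad\'am's theorem (Theorem~\ref{thm: iso}) by an appropriate choice of multiplier. Suppose without loss of generality that $b$ is relatively prime to $n$ (the case where $a$ is the unit is symmetric, after first swapping the roles of $a$ and $b$, which does not change the graph since the set of differences is unordered). Since $\gcd(n,b)=1$, the element $b$ is invertible modulo $n$; let $p$ be its inverse, so that $pb \equiv 1 \pmod n$. Note that $\gcd(n,p)=1$ as well, since $p$ is a unit modulo $n$. Applying Theorem~\ref{thm: iso} with this $p$ gives
\[
C_n(a,b) \cong C_n(pa, pb) = C_n(pa, 1),
\]
where all products are reduced modulo $n$.

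The remaining step is purely notational: rewriting $C_n(pa,1)$ in the normalized form $C_n(1,k)$ with $k \in \{1,\dots,\lfloor n/2\rfloor\}$. Here I would invoke the conventions recalled in Section~\ref{sec: intro}: the set of differences is unordered, so $C_n(pa,1) = C_n(1,pa)$; each difference is taken modulo $n$ and may be replaced by its negative, so if $pa \bmod n$ exceeds $\lfloor n/2\rfloor$ we replace it by $n - (pa \bmod n)$, which lies in $\{1,\dots,\lfloor n/2\rfloor\}$. Setting $k$ to be the resulting value yields $C_n(a,b) = C_n(1,k)$ as claimed. (One should also observe $pa \not\equiv 0 \pmod n$, so that the graph is genuinely tetravalent or, in the degenerate case $pa \equiv 1$, trivalent — in either case $k$ is a well-defined element of $\{1,\dots,\lfloor n/2\rfloor\}$; if $C_n(a,b)$ had a repeated or zero difference this would just be a smaller circulant, and the statement is vacuous or trivial.)

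I do not anticipate a genuine obstacle here: the only subtlety is bookkeeping about the normalization conventions and making sure the multiplier $p$ is itself a unit so that Theorem~\ref{thm: iso} applies. I would keep the proof to three or four sentences, essentially "let $p = b^{-1} \bmod n$, apply Theorem~\ref{thm: iso}, and renormalize."
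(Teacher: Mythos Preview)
Your proof is correct and follows essentially the same approach as the paper: choose the multiplicative inverse of the coprime difference as the multiplier $p$ in \'Ad\'am's theorem, then normalize the remaining difference to lie in $\{1,\dots,\lfloor n/2\rfloor\}$. The only cosmetic difference is that the paper takes $a$ (rather than $b$) to be the unit, and it omits your parenthetical remarks about degeneracies.
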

\begin{proof}
Suppose without loss of generality that $\gcd(a,n)=1$. An elementary result of number theory shows that there exists $p$ in $\{1,\dots,n-1\}$ such that $ap \equiv 1 \pmod{n}$ and $p$ is relatively prime to $n$. Hence $C_n(a,b) \cong C_n(pa,pb) \cong C_n(1,k)$, where $k$ is either $pb$ or $n-pb$ modulo $n$, whichever belongs to $\{1,\dots,\lfloor{n/2}\rfloor\}$.
\end{proof}

\begin{cor} \label{cor: iso} If $n=k\ell\pm 1$, then $C_n(1,k) \cong C_n(1,\ell)$.
\end{cor}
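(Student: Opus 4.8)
The plan is to realize the isomorphism as an explicit multiplier map, using \'Ad\'am's theorem (Theorem~\ref{thm: iso}) together with the freedom to replace a difference $d$ by $n-d$. Suppose first that $n = k\ell - 1$, so that $k\ell \equiv 1 \pmod n$. Then $\gcd(k,n) = 1$ (any common divisor of $k$ and $n$ would divide $k\ell - n = 1$), so we may take $p = \ell$ as a multiplier: since $k\ell \equiv 1$, $\ell$ is the inverse of $k$ modulo $n$, and in particular $\gcd(\ell, n) = 1$ as well. Applying Theorem~\ref{thm: iso} with this $p = \ell$ gives
\[
C_n(1, k) \cong C_n(\ell \cdot 1, \ell \cdot k) = C_n(\ell, k\ell \bmod n) = C_n(\ell, 1),
\]
since $k\ell \equiv 1 \pmod n$. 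As $C_n(\ell, 1) = C_n(1, \ell)$, this settles the case $n = k\ell - 1$.

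For the case $n = k\ell + 1$, we have $k\ell \equiv -1 \pmod n$, so $k(n-\ell) \equiv 1 \pmod n$; again $\gcd(k,n) = 1$ and now $\gcd(n-\ell, n) = 1$. Taking $p = n - \ell$ in Theorem~\ref{thm: iso} yields
\[
C_n(1, k) \cong C_n\bigl((n-\ell)\cdot 1,\ (n-\ell)k\bigr) = C_n(n-\ell,\ 1),
\]
using $(n-\ell)k \equiv 1 \pmod n$. Finally, replacing the difference $n - \ell$ by $n - (n-\ell) = \ell$ — which, as noted in Section~\ref{sec: intro}, does not change the graph — gives $C_n(n-\ell, 1) = C_n(\ell, 1) = C_n(1, \ell)$, as desired.

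There is essentially no hard step here; the only points requiring a word of care are (i) verifying that $\gcd(k, n) = 1$ so that the hypothesis of Theorem~\ref{thm: iso} is genuinely met (this is immediate from $n = k\ell \pm 1$), and (ii) keeping track of the reduction of differences modulo $n$ and of the $d \leftrightarrow n-d$ symmetry so that the resulting difference set really is $\{1, \ell\}$ (or its canonical form in $\{1, \dots, \lfloor n/2 \rfloor\}$). One could merge the two cases by simply choosing $p$ to be the inverse of $k$ modulo $n$ — which exists since $\gcd(k,n)=1$ — and observing that $p \equiv \ell$ or $p \equiv -\ell \pmod n$ according to the sign, but splitting into the two cases makes the bookkeeping most transparent.
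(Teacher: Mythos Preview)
Your proof is correct and follows essentially the same approach as the paper: apply \'Ad\'am's theorem with a multiplier that sends $k$ to $\pm 1$ modulo $n$, then normalize the resulting difference set. The paper handles both signs at once by always taking $p = n-\ell$ (yielding $C_n(n-\ell,\pm 1)$), whereas you split into cases with $p=\ell$ and $p=n-\ell$; this is a purely cosmetic difference, and you even note the uniform version at the end.
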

\begin{proof}
If $n=k\ell\pm 1$, then $\ell$ is relatively prime to $n$, as is $n-\ell$. Letting $p=n-\ell$ in Theorem~\ref{thm: iso} shows that $C_n(1,k) \cong C_n(n-\ell, \pm 1) \cong C_n(1,\ell)$.
\end{proof}

We recall now a few results about the chromatic number $\chi(G)$ of circulant graphs $G$. The following result was conjectured by Collins, Fisher, and Hutchinson (see~\cite{CollinsEtAl98, Fisher98} as cited in~\cite{YehZhu03}) and proved by Yeh and Zhu~\cite{YehZhu03}; see also~\cite{GobelNeutel00}, \cite{Heuberger03}, and~\cite{NicolosoPietropaoli07}.

\begin{thm} \label{thm: chromatic num}
    Let $k,n$ be positive integers such that $2 \leq k \leq \lfloor n/2 \rfloor$. The chromatic number of $C_n(1,k)$ is given by \[\chi(C_n(1,k)) = \begin{cases}
    2 & \text{ if $k$ is odd and $n$ is even;}\\
    4 & \text{ if $k = 2$ or $k = (n-1)/2$, and $n \neq 5$ and $3 \nmid n$;}\\
    4 & \text{ if $k=5$ and $n=13$;}\\
    5 & \text{ if $k=2$ and $n=5$;}\\
    3 & otherwise.
    \end{cases}\]
\end{thm}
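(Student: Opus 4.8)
The plan is to treat the lower and upper bounds separately, and within each to organize by the value being claimed.

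\emph{Lower bounds.} Since $C_n(1,k)$ has an edge, $\chi\ge 2$, with equality exactly when the graph is bipartite; the map $v_i\mapsto(i\bmod 2)$ is a proper $2$-coloring precisely when $n$ is even and $k$ is odd, and otherwise $C_n(1,k)$ contains an odd cycle (the spanning cycle $v_0v_1\cdots v_{n-1}v_0$ when $n$ is odd, and the $(k{+}1)$-cycle $v_0v_1\cdots v_kv_0$, which closes because $v_0v_k$ is an edge and $k+1\le n$, when $n$ and $k$ are both even). This settles the $\chi=2$ row and reduces the rest to separating $\chi=3$ from $\chi\ge 4$. For $k=2$ the graph is the square of $C_n$, so every independent set is a set of indices pairwise at cyclic distance $\ge 3$; hence the independence number is $\lfloor n/3\rfloor$ and $\chi\ge\lceil n/\lfloor n/3\rfloor\rceil$, which is $5$ for $n=5$ (indeed $C_5(1,2)=K_5$) and $4$ whenever $3\nmid n$ with $n\ge 7$ (and $C_4(1,2)=K_4$). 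The case $k=(n-1)/2$ reduces to $k=2$, since $n=2k+1$ gives $C_n(1,k)\cong C_n(1,2)$ by Corollary~\ref{cor: iso}. Finally, for $(n,k)=(13,5)$ a short finite check gives $\alpha(C_{13}(1,5))=4<13/3$, hence $\chi\ge 4$.

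\emph{Upper bounds in the exceptional rows.} When $n$ is even and $k$ odd, the parity coloring gives $\chi\le 2$. For $C_n(1,2)$ — and hence, via the isomorphism above, for $C_n(1,(n-1)/2)$ — a near-periodic pattern of period $4$ together with a short adjustment at the wrap-around gives $\chi\le 4$. For $(n,k)=(5,2)$ the graph is $K_5$, so $\chi=5$, and for $(n,k)=(13,5)$ one exhibits an explicit $4$-coloring (color classes of sizes $4,3,3,3$). What remains is to produce a proper $3$-coloring in all other cases, which is the heart of the argument.

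\emph{The $3$-coloring construction (main step).} Using \'Ad\'am's theorem (Theorem~\ref{thm: iso}) and its corollaries, first replace $k$ by a convenient equivalent difference $\ell$ with $C_n(1,k)\cong C_n(1,\ell)$ — for instance $C_n(1,k)\cong C_n(1,\ell)$ whenever $k\ell\equiv\pm 1\pmod n$ or $\ell\equiv\pm k\pmod n$ — so as to control $k\bmod 3$ and $\gcd(n,k)$. Then build the coloring from short periodic patterns and a bounded number of local corrections: if $3\mid n$ and $3\nmid k$, the periodic pattern $0,1,2,0,1,2,\dots$ is proper for both edge types; if $3\mid n$ and $3\mid k$, a suitably shifted block pattern (each length-$3$ block $j$ colored $j,j{+}1,j{+}2\pmod 3$, as for $C_9(1,3)$) or a short periodic pattern such as $0,1,2,1,\dots$ works after a further application of Theorem~\ref{thm: iso} to make the two ``seam'' conditions hold; and if $3\nmid n$, one concatenates period-$3$ blocks $0,1,2$ with one or two length-$2$ correction blocks ($0,1$ or $0,2$), placed so that the total length is $n$, every cyclic distance-$1$ seam is proper, and no two equally colored vertices lie at distance $k$.

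\emph{Main obstacle.} The genuine difficulty is this last construction: arranging the corrections — equivalently, carrying out the case analysis on $n\bmod 3$, $k\bmod 3$, and $\gcd(n,k)$ — so that a single family of colorings covers every ``otherwise'' case, and verifying that it breaks down exactly for $k\in\{2,(n-1)/2\}$ with $3\nmid n$, for $(13,5)$, and for $(5,2)$, and nowhere else. The non-$3$-colorability of $C_{13}(1,5)$, which comes from the independence-number count above rather than from a $K_4$ or odd-wheel subgraph, is the one genuinely sporadic point.
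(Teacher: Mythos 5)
A preliminary remark: the paper itself does not prove this statement — it is quoted from Yeh and Zhu (with related proofs in the cited work of G\"obel--Neutel, Heuberger, and Nicoloso--Pietropaoli) — so your attempt must stand on its own. The peripheral parts of your plan are sound: bipartiteness exactly when $n$ is even and $k$ is odd (with odd cycles otherwise); the reduction of $k=(n-1)/2$ to $k=2$ via Corollary~\ref{cor: iso}; the lower bound $\chi \ge \lceil n/\lfloor n/3\rfloor\rceil = 4$ for $C_n(1,2)$ when $3\nmid n$, since independent sets in the square of a cycle have size at most $\lfloor n/3\rfloor$; and the sporadic values at $(5,2)$ and $(13,5)$ (though $\alpha(C_{13}(1,5))=4$ is asserted, not verified — that finite check is exactly what forces $\chi\ge 4$ there, so it cannot be waved at).

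The genuine gap is the step you yourself flag as the main obstacle: exhibiting a proper $3$-coloring in every ``otherwise'' case, and showing the construction fails only on the listed exceptions. As written this is a description of an intended construction, not a proof. You never verify that period-$3$ blocks $0,1,2$ together with one or two length-$2$ correction blocks can always be placed so that no two vertices at distance $k$ get the same color; that constraint couples the positions of the corrections to the residues of $n$ and $k$ modulo $3$ and to $\gcd(n,k)$, and it is precisely where the exceptional families $k\in\{2,(n-1)/2\}$ with $3\nmid n$ and the sporadic $(13,5)$ emerge. Likewise, the preliminary normalization ``use Theorem~\ref{thm: iso} to control $k\bmod 3$ and $\gcd(n,k)$'' is not justified: the only available multipliers are units modulo $n$, and you do not show that a convenient equivalent difference $\ell$ can always be reached. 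Establishing $3$-colorability for all remaining $(n,k)$ is the entire content of the theorem — in the literature it takes a substantial case analysis (Yeh--Zhu; Heuberger) — so the proposal is an outline whose central step is missing rather than a proof.
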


\section{Trivalent circulant graphs} \label{sec: trivalent}
When $n$ is even, the circulant graph $C_n(1,n/2)$ is a trivalent graph also known as the \emph{M\"obius Ladder} due to a drawing as a M\"obius band of 4-cycles; see Figure~\ref{fig: Mobius} which draws $C_8(1,4)$ in two ways. These are the unique trivalent circulant graphs $C_n(1,k)$.

\begin{figure} 
    \centering
    \begin{tikzpicture}
    \node (0) [draw,shape=circle, fill=black, scale=.4] at (90:2) {};
    \node at (90:2.3) {$v_0$};
    \node (1) [draw,shape=circle, fill=black, scale=.4] at (45:2) {};
    \node at (45:2.3) {$v_1$};
    \node (2) [draw,shape=circle, fill=black, scale=.4] at (0:2) {};
    \node at (0:2.3) {$v_2$};
    \node (3) [draw,shape=circle, fill=black, scale=.4] at (315:2) {};
    \node at (315:2.3) {$v_3$};
    \node (4) [draw,shape=circle, fill=black, scale=.4] at (270:2) {};
    \node at (270:2.3) {$v_4$};
    \node (5) [draw,shape=circle, fill=black, scale=.4] at (225:2) {};
    \node at (225:2.3) {$v_5$};
    \node (6) [draw,shape=circle, fill=black, scale=.4] at (180:2) {};
    \node at (180:2.3) {$v_6$};
    \node (7) [draw,shape=circle, fill=black, scale=.4] at (135:2) {};
    \node at (135:2.3) {$v_7$};
    \draw (0)--(1)--(2)--(3)--(4)--(5)--(6)--(7)--(0)--(4);
    \draw (1)--(5);
    \draw (2)--(6);
    \draw (3)--(7);
    \end{tikzpicture}
    \begin{tikzpicture}
    \node (0) [draw,shape=circle, fill=black, scale=.4] at (90:2) {};
    \node at (90:2.3) {$v_0$};
    \node (4) [draw,shape=circle, fill=black, scale=.4] at (90:1) {};
    \node at (90:0.7) {$v_4$};
    \node (5) [draw,shape=circle, fill=black, scale=.4] at (0:2) {};
    \node at (0:2.3) {$v_5$};
    \node (1) [draw,shape=circle, fill=black, scale=.4] at (0:1) {};
    \node at (0:0.7) {$v_1$};
    \node (6) [draw,shape=circle, fill=black, scale=.4] at (270:2) {};
    \node at (270:2.3) {$v_6$};
    \node (2) [draw,shape=circle, fill=black, scale=.4] at (270:1) {};
    \node at (270:0.7) {$v_2$};
    \node (7) [draw,shape=circle, fill=black, scale=.4] at (180:2) {};
    \node at (180:2.3) {$v_7$};
    \node (3) [draw,shape=circle, fill=black, scale=.4] at (180:1) {};
    \node at (180:0.7) {$v_3$};
    \draw (0)--(1)--(2)--(3)--(4)--(5)--(6)--(7)--(0)--(4);
    \draw (1)--(5);
    \draw (2)--(6);
    \draw (3)--(7);
    \end{tikzpicture}
    \caption{The graph $C_8(1,4)$ drawn as a M\"obius ladder.}
    \label{fig: Mobius}
\end{figure}
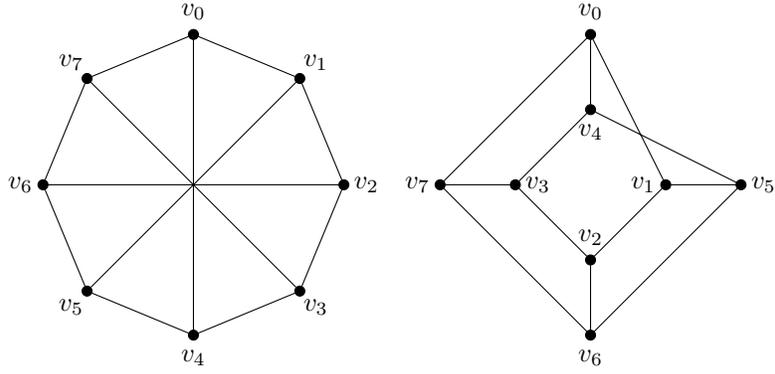

\begin{thm} \label{thm: Mobius}
For even integers $n \geq 4$, \[\chi_D(C_n(1,n/2)) = \begin{cases}
4 & \text{ if }n=4;\\
6 & \text{ if }n=6;\\
3 & \text{ if }n \geq 8.
\end{cases}\]
\end{thm}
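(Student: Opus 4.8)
The plan is to dispose of the small cases $n=4$ and $n=6$ by direct inspection, then treat the main range $n\ge 8$ by exhibiting an explicit $3$-distinguishing proper coloring (the lower bound $\chi_D\ge 3$ being immediate since $C_n(1,n/2)$ contains odd cycles, e.g. the triangle $v_0v_1v_{n/2+1}$ when... more carefully, $\chi(C_n(1,n/2))=3$ for $n\ge 6$ not a multiple of $4$ and we simply note $\chi_D\ge \chi\ge 3$; when $4\mid n$ the graph is bipartite, so there we must separately argue that $2$ colors cannot distinguish, which is easy because a proper $2$-coloring of a connected bipartite graph is unique up to swapping the two colors, and the color-swap is realized by an automorphism — for instance the rotation $v_i\mapsto v_{i+1}$, which exchanges the two parts). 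For $n=4$, $C_4(1,2)=K_4$, so $\chi_D=4=n$ by the Collins–Trenk characterization of complete multipartite graphs. For $n=6$, $C_6(1,3)=K_{3,3}$, which is complete bipartite, so again $\chi_D=6=n$ by the same theorem.

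For the substantive part, $n\ge 8$ even, I would first pin down the automorphism group of the Möbius ladder $M_n=C_n(1,n/2)$: for $n\ge 8$ it is the dihedral group of order $2n$ acting on the Hamilton cycle $v_0v_1\cdots v_{n-1}v_0$ (the "rung" edges $v_iv_{i+n/2}$ are then determined), with the one sporadic exception $n=8$ where $M_8=C_8(1,4)=Q_3$ has a larger group; I should check whether the coloring I construct still kills the extra symmetries of $Q_3$, or handle $n=8$ by a small ad hoc coloring. The core construction: color almost all vertices with color $1$ and color $2$ alternately along the cycle, and plant a short asymmetric "beacon" pattern of color-$3$ vertices near $v_0$ that breaks all rotations and reflections. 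Concretely, since a dihedral automorphism is determined by the image of an ordered edge, it suffices to choose the coloring so that (a) it is proper, and (b) the set of color classes, read cyclically, has no nontrivial cyclic or reflective symmetry. A clean way: use the pattern on indices $0,1,2,3$ being colors $3,1,3,2$ (or similar) and then strict $1,2$-alternation from $v_4$ onward, adjusting parity at the wrap-around using a single extra color-$3$ vertex if $n\equiv 2\pmod 4$ forces a conflict. One must check propriety of the chord edges $v_iv_{i+n/2}$: a chord joins $v_i$ and $v_{i+n/2}$, and for $n\ge 8$ the two endpoints are far enough apart that with the alternating scheme they receive different colors (when $n/2$ is odd) or we must place beacons so that no chord has both ends equal — this parity bookkeeping is where care is needed.

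The main obstacle I anticipate is the interaction between properness of the rung edges and the parity of $n/2$. When $n/2$ is even (i.e. $4\mid n$), the graph is bipartite and the naive $1,2$-alternation along the cycle makes every rung monochromatic, so the beacon vertices must be placed to fix every rung, not just to break symmetry; this is a genuine constraint and the construction must be designed around it (for instance, on the two "sides" $v_0,\dots,v_{n/2-1}$ and $v_{n/2},\dots,v_{n-1}$ one should use complementary alternating patterns, which makes rungs proper automatically, and then overlay a small asymmetric decoration). When $n/2$ is odd, rungs are automatically proper under alternation, but then the cycle-alternation conflicts with itself at the wrap (since $n/2$ odd with $n$ even means $n\equiv 2\pmod 4$), so exactly one "defect" is needed and it can double as part of the beacon. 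So I would split $n\ge 8$ into the two subcases $4\mid n$ and $n\equiv 2\pmod4$, give the explicit coloring in each, verify properness (cycle edges by alternation, rung edges by the side-complementarity or the odd-shift argument), and verify rigidity by the "image of an oriented edge" principle together with the observation that the unique color-$3$ pattern occurs in exactly one position and orientation. Finally I would double-check $n=8$ separately against $\Aut(Q_3)$, and conclude $\chi_D(M_n)=3$ for all even $n\ge 8$.
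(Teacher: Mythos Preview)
Your overall strategy is the paper's: dispose of $n=4,6$ via the Collins--Trenk complete-multipartite theorem, then for $n\ge 8$ use $\Aut(M_n)\cong D_n$, split by the residue of $n$ modulo $4$, and exhibit an explicit proper $3$-coloring whose two (or few) color-$3$ vertices are individually pinned by their neighborhoods. However, your parity bookkeeping is inverted throughout. The M\"obius ladder $M_n=C_n(1,n/2)$ is bipartite exactly when $n\equiv 2\pmod 4$ (color by index parity: a rung $v_iv_{i+n/2}$ is proper iff $n/2$ is odd), and non-bipartite when $4\mid n$ (the cycle $v_0v_1\cdots v_{n/2}v_0$ has odd length $n/2+1$). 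You assert the opposite in several places, and your claim that ``cycle-alternation conflicts with itself at the wrap'' when $n\equiv 2\pmod 4$ is also wrong---$n$ is even, so the Hamilton cycle closes up under alternation. Once you swap the two cases, your sketch becomes exactly the paper's argument: for $n\equiv 2\pmod 4$ take the bipartition and recolor two well-chosen nonadjacent vertices with $3$; for $4\mid n$ use complementary alternating patterns on the two halves with color $3$ planted at the seam.

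A second slip: $C_8(1,4)$ is \emph{not} $Q_3$. The cube is bipartite, while $M_8$ contains the $5$-cycle $v_0v_1v_2v_3v_4v_0$, so they are not isomorphic; in particular $\Aut(M_8)\cong D_8$ already and no ad hoc treatment of $n=8$ is needed.
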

\begin{proof}
When $n$ is 4 or 6, the graph $C_n(1,n/2)$ is isomorphic to $K_4$ or $K_{3,3}$, respectively. By the result of Collins and Trenk~\cite{CollinsTrenk06}, distinguishing colorings of these complete multipartite graphs require as many colors as there are vertices in the graph.

For even $n \geq 8$, it has been shown (see~\cite{MortezaMirafzal}, for instance) that the M\"obius ladder with $n$ vertices has the dihedral group of order $2n$ as its automorphism group. We exhibit a distinguishing proper coloring using 3 colors based on whether or not $n$ is a multiple of 4.

If $n=4t$, where $t>1$, then the M\"obius ladder is not bipartite and hence a proper coloring will require at least 3 colors. Assign colors to the vertices as shown here (assigning the colors shown to the vertices in order of their subscripts):
\begin{center}
\begin{tabular}{c|c|c|c|c|c|c|c|c|c|}

Colors of $v_0,\dots,v_{n/2-1}$   & 1 & 2 & 1 & 2 & 1 & 2 & $\cdots$ & 1 & 2 \\
\hline
Colors of $v_{n/2},\dots,v_{n-1}$ & 3 & 1 & 2 & 1 & 2 & 1 & $\cdots$ & 2 & 3 
\end{tabular}
\end{center}
Observe that each vertex is adjacent to vertices whose colors precede and follow its color in the table, as well as to the vertex whose color appears in the same position on the opposite row. Thus it is easy to verify that this is a proper coloring. (In fact, this coloring arises via a greedy coloring of the vertices $v_0,\dots,v_{n-1}$ in order.) Note that a coloring-preserving automorphism must permute the vertices with color 3, and one of these two vertices has two neighbors with color 1 while the other has only one neighbor with color 1. Hence the vertices with color 3 must be fixed under any color-preserving automorphism. The only dihedral symmetry fixing two non-antipodal vertices is the identity symmetry, so this coloring is a distinguishing coloring.

If $n=4t+2$, where $t>1$, then $C_n(1,n/2)$ is a bipartite graph. The unique partition of its vertices into two color classes allows for many dihedral symmetries, so a distinguishing proper coloring must use at least 3 colors. We obtain one by changing to color 3 the colors of a pair of nonadjacent vertices formerly colored 1 and 2 in a proper 2-coloring. One example is shown here:
\begin{center}
\begin{tabular}{c|c|c|c|c|c|c|}

Colors of $v_0,\dots,v_{n/2-1}$   & 3 & 2 & 1 & 2 & $\cdots$ & 1 \\
\hline
Colors of $v_{n/2},\dots,v_{n-1}$ & 2 & 1 & 3 & 1 & $\cdots$ & 2 
\end{tabular}
\end{center}
As before, we observe that a color-preserving automorphism must permute the vertices of color 3. Since one is adjacent only to vertices of color 1, while the other is adjacent only to vertices of color 2, the automorphism fixes these vertices. As before, the identity is thus the only color-preserving automorphism, and this is a distinguishing coloring.
\end{proof}

\section{Automorphism groups} \label{sec: automorphism groups}
For the remainder of the paper we will deal with the tetravalent graphs $C_n(1,k)$ where $1<k<n/2$. Because distinguishing colorings on graphs ``break'' nontrivial symmetries, this section will review some facts about automorphism groups of circulant graphs.

The following theorem of Poto\u{c}nik and Wilson~\cite{PotocnikWilson20} will be key to our organization. A graph is \emph{edge-transitive} if any edge may be carried to any other edge by some automorphism, and \emph{dart-transitive} if any edge may be mapped to any other edge, with the endpoint images specified, by some automorphism.

\begin{thm}[\cite{PotocnikWilson20}] \label{thm: PW automorphisms}
If $G$ is a tetravalent edge-transitive circulant graph with $n$ vertices, then it is dart-transitive and either:
\begin{enumerate}
    \item[\textup{(1)}] $G$ is isomorphic to $C_n(1,k)$ for some $a$ such that $k^2  \equiv \pm 1 \pmod{n}$, or 
    \item[\textup{(2)}] $n$ is even, 
    and $G$ is isomorphic to $C_{2m}(1,m+1)$ where $m=n/2$.
\end{enumerate}
\end{thm}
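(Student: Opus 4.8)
The plan is to deduce the result from two structural facts about circulant graphs: that for them edge-transitivity already forces dart-transitivity, and that dart-transitive circulants are either ``normal'' or assembled from smaller circulants by a (deleted) lexicographic construction. Write $G=\operatorname{Cay}(\mathbb{Z}_n,S)$ with $S=-S$; since $G$ is tetravalent, $|S|=4$, and then necessarily $S=\{\pm a,\pm b\}$ with $a\not\equiv\pm b\pmod{n}$ and $\gcd(a,b,n)=1$ (the last condition being connectedness). Put $A=\Aut(G)$ and let $R\cong\mathbb{Z}_n$ be its subgroup of translations. The edges of $G$ fall into two types according to whether they realize the difference $\pm a$ or $\pm b$, and $R$ preserves this partition; hence, if $G$ is edge-transitive, some $\sigma\in A$ carries an $a$-edge to a $b$-edge, and after composing with translations we may take $\sigma$ to fix the vertex $0$, so that $\sigma$ permutes $N(0)=S$ with $\sigma(a)\in\{\pm b\}$.

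\textbf{Step 1: $G$ is dart-transitive.} A connected graph that is vertex- and edge-transitive but not dart-transitive has even valency and is \emph{half-arc-transitive}, so I would first show that a tetravalent circulant cannot be half-arc-transitive. The natural tool is Maru\v{s}i\v{c}'s alternating-cycle decomposition for tetravalent half-arc-transitive graphs: such a graph admits an $A$-invariant orientation of its edges, and on a circulant this orientation must be compatible with the transitive $\mathbb{Z}_n$-action, which makes $S$ symmetric enough with respect to an orientation-reversing affine map to produce an orientation-reversing automorphism --- a contradiction. (Alternatively one may simply invoke the known fact that an edge-transitive Cayley graph on a cyclic group is dart-transitive.) This is one of the two places where real work is hidden.

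\textbf{Step 2: specialize the classification of dart-transitive circulants.} By the classification of connected dart-transitive circulant graphs (Kov\'acs; C.\,H.~Li), $G$ is $K_n$, a \emph{normal} circulant (one with $R\trianglelefteq A$), a lexicographic product $Y[\overline{K_d}]$ with $d\ge 2$ and $Y$ a smaller connected dart-transitive circulant, or a deleted lexicographic product $Y[\overline{K_d}]-dY$ with $d\ge 3$. Now impose $\operatorname{val}(G)=4$. If $G=K_n$ then $n=5$ and $G=C_5(1,2)$, with $2^2\equiv-1\pmod{5}$: conclusion (1). If $G$ is a normal circulant then $A=R\rtimes M$ with $M=\{m\in\mathbb{Z}_n^{\ast}:mS=S\}$ transitive on $S$; choosing $m\in M$ with $ma\in\{\pm b\}$ gives $\gcd(b,n)=\gcd(ma,n)=\gcd(a,n)$, which together with $\gcd(a,b,n)=1$ forces $a$ and $b$ to be units, so by \'Ad\'am's theorem we may normalize $a=1$, whence $b=\pm m$ and (using also $mb\in\{\pm a\}$) $m^2\equiv\pm 1\pmod{n}$; thus $G\cong C_n(1,k)$ with $k^2\equiv\pm 1\pmod{n}$: conclusion (1). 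If $G=Y[\overline{K_d}]$ then $d\cdot\operatorname{val}(Y)=4$ leaves only $Y=C_m$, $d=2$ (or $Y=K_2$, $d=4$), and a one-line check --- the differences of $C_{2m}(1,m+1)$ are exactly the full preimage of $\{\pm 1\}\subseteq\mathbb{Z}_m$ under reduction modulo $m$ --- identifies $C_m[\overline{K_2}]=C_{2m}(1,m+1)$: conclusion (2) (the subcase $Y=K_2$, $d=4$ is $K_{4,4}=C_8(1,5)$, the instance $m=4$). Finally, if $G=Y[\overline{K_d}]-dY$ with $d\ge 3$, then $(d-1)\operatorname{val}(Y)=4$; the subcase $Y=K_2$, $d=5$ is the crown graph on $10$ vertices, which equals $C_{10}(1,3)$ with $3^2\equiv-1\pmod{10}$ --- already covered by conclusion (1) --- and one checks the remaining subcase $Y=C_m$, $d=3$ contributes no circulant of the form $C_n(1,k)$ beyond those already listed.

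\textbf{The main obstacle.} Once the two structural inputs are granted, the valency-$4$ bookkeeping in Step 2 is routine; the genuine difficulties are (a) Step 1, the elimination of half-arc-transitivity for circulants, and (b) replacing the Kov\'acs--Li citation in Step 2 by a self-contained argument. For (b) the honest route is a direct analysis of the $A$-invariant partitions refining the $R$-orbits: one shows that if $A$ fails to normalize $R$ then, after removing the differences that lie in some proper subgroup, $S$ becomes a union of cosets of a nontrivial proper $H\le\mathbb{Z}_n$; with $|S|=4$ this pins $H$ down --- essentially $|H|=2$ and $S=\{\pm 1\}\cup(m+\{\pm 1\})$ with $m=n/2$, so that $G\cong C_{2m}(1,m+1)$. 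Carrying out this block-system analysis, while tracking the \'Ad\'am-type isomorphisms needed to normalize a unit generator to $1$, is essentially a re-proof, specialized to valency $4$, of the relevant part of the Kov\'acs--Li structure theorem, and this is where the effort concentrates.
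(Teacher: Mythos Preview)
The paper does not prove this theorem at all: it is quoted verbatim as a result of Poto\u{c}nik and Wilson~\cite{PotocnikWilson20} and used as a black box throughout Section~\ref{sec: automorphism groups} and later. So there is no ``paper's own proof'' to compare against; any argument you supply is necessarily going beyond what the paper does.

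That said, your outline is a reasonable reconstruction of how such a result is obtained in the literature. Reducing to dart-transitivity (Step~1) and then invoking the Kov\'acs/Li structure theorem for arc-transitive circulants (Step~2), followed by the valency-$4$ bookkeeping, is exactly the standard route; your treatment of the normal case and of the lexicographic product $C_m[\overline{K_2}]\cong C_{2m}(1,m+1)$ is correct. Two places where your sketch is still genuinely incomplete: in Step~1 you defer to ``the known fact'' that edge-transitive Cayley graphs on cyclic groups are dart-transitive without indicating how to prove it, and in Step~2 the deleted lexicographic case $C_m[\overline{K_3}]-3C_m$ is dismissed with ``one checks,'' but you have not actually checked which (if any) of these graphs are circulants of the form $C_n(1,k)$ and, when they are, that they already fall under conclusion~(1). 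Since the Kov\'acs--Li theorem is itself a substantial result, you should be clear that what you have written is a derivation from that theorem rather than a self-contained proof.
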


As we will see later as we color the graphs $C_n(1,k)$, the converse to Theorem~\ref{thm: PW automorphisms} is true. We will provide an optimal distinguishing proper coloring for the graphs in (2) in Section~\ref{sec: k = n/2-1}, noting that $C_{2m}(1,m+1)$ is isomorphic to $C_n(1,n/2-1)$, and say no more about these graphs here. We will discuss coloring the graphs in (1) in Section~\ref{sec: k^2 = pm 1} after a few comments about these graphs at the end of this section.

What about graphs $C_n(1,k)$ that are not edge-transitive? As we will see, in that case $\Aut(C_n(1,k)$ is isomorphic to a dihedral group of order $2n$.

Let $E_1$ denote the set of edges of $C_n(1,k)$ joining vertices having indices differing by 1 modulo $n$, and similarly let $E_k$ denote the set of edges of $C_n(1,k)$ joining vertices  whose indices' difference is $k$ modulo $n$. Observe that an automorphism $\phi:V(C_n(1,k)) \to V(C_n(1,k))$ induces a permutation on the edge set of $C_n(1,k)$ that ``carries'' edge $v_iv_j$ to edge $\phi(v_i)\phi(v_j)$ for any $i,j$.

\begin{thm} \label{thm: E_1 and E_k permutations}
If $k \neq n/2-1$ and $(n,k) \neq (10,3)$, then any automorphism of $C_n(1,k)$ either carries every edge in $E_1$ to an edge in $E_1$ or carries every edge in $E_1$ to an edge in $E_k$.
\end{thm}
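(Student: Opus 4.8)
The plan is to study the \emph{cycle structure} of the subgraphs $(V, E_1)$ and $(V, E_k)$ and of mixed short cycles, and to show that an automorphism is forced to respect these structures. Note first that $(V, E_1)$ is a Hamiltonian cycle $v_0 v_1 \cdots v_{n-1} v_0$ of length $n$, while $(V, E_k)$ is a disjoint union of $\gcd(n,k)$ cycles each of length $n/\gcd(n,k)$. If $\gcd(n,k) > 1$, these two spanning subgraphs are non-isomorphic: $E_1$ is connected and $E_k$ is not. Since an automorphism $\phi$ carries each $E_i$ to a $2$-regular spanning subgraph whose edges all come from $\{E_1, E_k\}$, and since the images $\phi(E_1), \phi(E_k)$ partition $E(C_n(1,k))$ into two $2$-regular spanning subgraphs, a counting/connectivity argument pins down $\phi(E_1) \in \{E_1, E_k\}$ in the case $\gcd(n,k)>1$. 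So I would dispose of that case first.

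The harder case is $\gcd(n,k)=1$, where both $(V,E_1)$ and $(V,E_k)$ are single $n$-cycles, so connectivity alone does not separate them. Here the plan is to use \emph{girth} and the count of short cycles. The relevant short cycles in $C_n(1,k)$ are: triangles (possible only when $k=2$ or $3k \equiv 0$ or $2k\equiv \pm 1$, etc.), $4$-cycles (of type $1{+}1{+}1{+}1$ using four consecutive $E_1$ edges — always present when $n\ge 8$ via no such $4$-cycle unless we also count $1{+}k{+}1{+}k$ squares when $2k \equiv \pm 2$), and the ``ladder'' $4$-cycles $v_i v_{i+1} v_{i+1+k} v_{i+k} v_i$ using two $E_1$ edges and two $E_k$ edges, which exist for \emph{every} $i$ regardless of $n,k$. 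I would argue that for most $(n,k)$ the $4$-cycles of $C_n(1,k)$ are \emph{exactly} these $n$ ladder squares, each using two $E_1$-edges and two $E_k$-edges in alternation, and that no two distinct $E_1$-edges lie in a common $4$-cycle with each other unless $n$ is small. An automorphism permutes $4$-cycles, hence permutes ladder squares; in each ladder square the two $E_1$-edges are "opposite" and the two $E_k$-edges are "opposite," and the two pairs are distinguished by further local structure (e.g. whether the two edges of a pair lie in a second common $4$-cycle, which happens for the $E_1$-pair when $n\ge 8$ via the adjacent ladder square sharing an $E_k$-edge, but can fail for the $E_k$-pair). Tracking this "opposite-pair" relation around the graph forces $\phi$ to send the $E_1$-edge class of one ladder square consistently, and then to propagate: adjacent ladder squares share exactly one edge, and the shared edge's class (as determined by $\phi$) must be consistent, so $\phi(E_1)$ is globally one of $E_1, E_k$.

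The main obstacle — and the reason for the hypotheses $k \neq n/2-1$ and $(n,k)\neq(10,3)$ — is controlling \emph{exceptional extra $4$-cycles} (or triangles). When $k = n/2 - 1$ one gets $2k \equiv -2 \pmod n$, producing additional $4$-cycles $v_i v_{i+1} v_{i+1-k}\cdots$ that mix the edge classes in a way that lets an automorphism swap $E_1$-edges for $E_k$-edges \emph{within} a single square asymmetrically, breaking the argument (indeed that graph is dart-transitive of a different flavor). The case $(n,k)=(10,3)$ is a genuine sporadic coincidence ($3^2 \equiv -1 \pmod{10}$, and also $k=n/2-1$-adjacent arithmetic) where the short-cycle census is anomalously rich. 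So the real work is a careful case analysis: enumerate all solutions of $a k \equiv b \pmod n$ with $|a|,|b|$ small that create short cycles, show each such solution either (i) is covered by the excluded families, or (ii) still leaves the $E_1$/$E_k$ dichotomy intact because the extra cycles are symmetric between the two classes. I expect that step — the exhaustive short-cycle bookkeeping and verifying the local asymmetry invariant survives — to be the crux; everything after it (propagating local consistency to a global statement via the connectedness of the "ladder-square adjacency" structure) is routine.
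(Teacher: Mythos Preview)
Your overall strategy---exploit the $4$-cycle structure to pin down $\phi(E_1)$---matches the paper's, but two concrete issues need flagging, and the paper's execution is simpler than yours.

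First, the $\gcd(n,k)>1$ shortcut is a genuine gap. Knowing that $(V,E_1)$ is a single $n$-cycle while $(V,E_k)$ is disconnected only tells you that $\phi(E_1)$ is \emph{some} Hamiltonian cycle of $C_n(1,k)$; there are in general many Hamiltonian cycles mixing $E_1$- and $E_k$-edges, and nothing in your ``counting/connectivity argument'' rules those out. The paper makes no such case split; its $4$-cycle argument is uniform in $\gcd(n,k)$.

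Second, you underestimate $k=3$. For \emph{every} $n\ge 7$, two incident $E_1$-edges share a $4$-cycle when $k=3$ (e.g.\ $v_0v_1v_2v_{n-1}$, since $v_2v_{n-1}\in E_3$), so the local propagation you sketch breaks down---yet only $(n,k)=(10,3)$ is excluded in the hypothesis, not all of $k=3$. The paper treats $k=3$ by a different invariant: for $n>12$ each $E_1$-edge lies in exactly five $4$-cycles while each $E_3$-edge lies in exactly three, so no automorphism can mix the classes; the remaining small $n$ are checked directly. Your option~(ii), ``extra cycles are symmetric between the two classes,'' points in the wrong direction here: it is precisely an \emph{asymmetric} $4$-cycle count that rescues $k=3$.

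Where the plan is vaguer than necessary, the paper's local criterion is crisp. For $k\notin\{3,\,n/2-1\}$, the edges incident to a fixed $e\in E_1$ that share a $4$-cycle with $e$ are \emph{exactly} the four $E_k$-edges at its endpoints, never the two incident $E_1$-edges; this is a one-line common-neighbor computation (a common neighbor of $v_0$ and $v_2$ forces $k\in\{3,\,n/2-1\}$). Since $\phi$ preserves this relation, if $\phi(e)\in E_1$ then the $E_1$-neighbors of $e$ map to the $E_1$-neighbors of $\phi(e)$, and induction along the Hamiltonian cycle $E_1$ finishes. Your ``opposite-pair'' mechanism inside a ladder square is aiming at the same conclusion but is both more elaborate and, as written, has the adjacency backwards: consecutive ladder squares share an $E_k$-edge, not an $E_1$-edge, so your parenthetical about which opposite pair lies in a second common $4$-cycle is inverted.
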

\begin{proof}
The result can be verified directly for $n < 7$, so assume that $n \geq 7$.

Consider an edge of $E_1$ in $C_n(1,k)$; by symmetry we may assume that it is $v_0v_1$. Note that this edge belongs to the two 4-cycles $v_0 v_{1} v_{k+1} v_{k}$ and $v_0 v_{1} v_{1-k} v_{-k}$; these are distinct 4-cycles unless $k=n/2$, in which case they coincide. Note that for each edge in $E_k$ incident with either $v_0$ or $v_{1}$, there is a 4-cycle containing that edge and $v_0 v_{1}$. We claim now that no 4-cycle contains both $v_0 v_{1}$ and an incident edge from $E_1$. Indeed, for a 4-cycle to contain the path $v_0 v_{1} v_{2}$, both $v_0$ and $v_{2}$ must have a common neighbor $v_j$, where $j \in \{-k,-1,k\} \cap \{2-k,3,k+2\}$. Setting each element of the first set equal to each element of the second set and recalling that $n \geq 7$ and that $1 < k < n/2$, we see that the existence of a common neighbor $v_j$ implies that either $k = n/2 - 1$, a contradiction to our hypothesis, or $k=3$. A similar argument and conclusion holds if $C_n(1,k)$ has a 4-cycle containing the path $v_{-1} v_0 v_{1}$.

Let us assume for now that $k \neq 3$. Since the property of inclusion of a pair of edges in some induced 4-cycle is preserved by an automorphism, it follows that if an edge from $E_1$ is carried by an automorphism to an edge from $E_1$, the same must be true for the images of its incident edges from $E_1$. Working inductively outward from the first edge, we see that each edge of $E_1$ is then carried to an edge in $E_1$, and edges from $E_k$ are forced to be carried to edges in $E_k$. Conversely, if an edge from $E_1$ is carried by an automorphism of $C_n(1,k)$ to an edge in $E_k$, then no edge from $E_1$ can be carried to an edge from $E_1$, which forces all edges from $E_1$ to be carried to edges from $E_k$ and vice versa.

If instead $k=3$, observe directly that in each of $C_n(1,3)$ for $7 \leq n \leq 12$ except for $n \in \{8,10\}$, every automorphism of $C_n(1,3)$ either carries all edges in $E_1$ to edges in $E_1$ or carries all edges in $E_1$ to $E_3$, as claimed. Assume now that $k=3$ and $n > 12$. Note that the size of $n$ implies that each edge in $E_1$ belongs to exactly five 4-cycles; for $v_0v_1$ these cycles are $v_0v_1v_{n-2}v_{n-3}$, $v_0v_1v_{n-2}v_{n-1}$, $v_0v_1v_{2}v_{n-1}$, $v_0v_1v_{2}v_{3}$, and $v_0v_1v_{4}v_{3}$. In contrast each edge in $E_3$ belongs to exactly three 4-cycles; for $v_0v_3$ these are $v_0v_3v_2v_{n-1}, v_0v_3v_2v_1, v_0v_3v_4v_1$. Since the inclusion of an edge in a 4-cycle is preserved under the image of a graph automorphism $\phi$, any such map $\phi$ induces a permutation of the edges in $E_1$ and a permutation of the edges of $E_k$.
\end{proof}

We arrive at our result; let $D_n$ be the dihedral group of order $2n$, the symmetry group of a regular $n$-gon.

\begin{cor} \label{cor: dihedral group}
If the graph $C_n(1,k)$, where $1 < k < n/2$, satisfies neither $k^2 \equiv \pm 1 \pmod{n}$ nor $k=n/2-1$, then $\Aut(C_n(1,k)) \cong D_{n}$.
\end{cor}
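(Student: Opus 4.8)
The plan is to derive Corollary~\ref{cor: dihedral group} as a fairly direct consequence of Theorem~\ref{thm: E_1 and E_k permutations}, together with the hypothesis that $C_n(1,k)$ is \emph{not} edge-transitive (which is what the condition $k^2 \not\equiv \pm 1 \pmod n$, combined with $k \neq n/2-1$, buys us via the contrapositive of Theorem~\ref{thm: PW automorphisms}). Small cases $n < 7$ can be checked directly, so assume $n \geq 7$. Since $1 < k < n/2$, the graph is genuinely tetravalent, and by Theorem~\ref{thm: PW automorphisms} it is not edge-transitive; in particular no automorphism carries an edge of $E_1$ to an edge of $E_k$. Hence by Theorem~\ref{thm: E_1 and E_k permutations}, every automorphism $\phi$ of $C_n(1,k)$ carries $E_1$ onto $E_1$ (and therefore also $E_k$ onto $E_k$).

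Next I would exploit the fact that the edges of $E_1$ form a Hamiltonian cycle $v_0 v_1 \cdots v_{n-1} v_0$, call it $H$. Since $\phi$ permutes the edges of $H$ among themselves and $\phi$ is a graph automorphism (hence preserves incidence), $\phi$ restricts to an automorphism of the cycle $H$. The automorphism group of $C_n$ is exactly $D_n$, so $\phi$ acts on the indices as $i \mapsto i + c \pmod n$ for some fixed $c$ (a rotation) or as $i \mapsto c - i \pmod n$ for some fixed $c$ (a reflection). This shows $\Aut(C_n(1,k))$ embeds as a subgroup of $D_n$ via the action on vertex indices.

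Conversely, I would verify that every such map is actually an automorphism of the whole graph $C_n(1,k)$, not merely of $H$: a rotation $i \mapsto i+c$ clearly preserves both difference classes $1$ and $k$ modulo $n$, and a reflection $i \mapsto c - i$ sends a pair of indices differing by $d$ to a pair differing by $-d \equiv d$, so it also preserves $E_1$ and $E_k$. Thus all $2n$ maps in $D_n$ (acting on indices) are automorphisms, and combined with the embedding of the previous paragraph we conclude $\Aut(C_n(1,k)) \cong D_n$. One should also note these $2n$ index maps are pairwise distinct as permutations of $V$ (true whenever $n \geq 3$), so the order is exactly $2n$.

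The main obstacle — really the only nontrivial point — is establishing that an automorphism $\phi$ stabilizing $E_1$ must act on $V$ as an element of $D_n$ in the index coordinates; i.e., that restricting $\phi$ to the cycle $H$ determines $\phi$ on all of $V$. This follows because $V(H) = V(C_n(1,k))$, so the restriction is not a loss of information: $\phi$ as a function on vertices is literally the same function, and it is an automorphism of $H \cong C_n$, whose automorphism group is $D_n$ acting in the standard way on the cyclic order of vertices around $H$. Everything else is a routine check. I would also remark that this argument, read together with the earlier sections, confirms the promised converse to Theorem~\ref{thm: PW automorphisms}: the graphs falling under case~(1) or~(2) there are exactly the edge-transitive ones among the $C_n(1,k)$, while all others have automorphism group precisely $D_n$.
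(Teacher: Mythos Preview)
Your proposal is correct and follows essentially the same approach as the paper: show $D_n$ sits inside $\Aut(C_n(1,k))$ via the obvious rotations and reflections, and use Theorem~\ref{thm: PW automorphisms} to rule out any automorphism mixing $E_1$ with $E_k$, so that every automorphism restricts to an automorphism of the Hamiltonian cycle $E_1$ and hence lies in $D_n$. The paper's proof is terser---it asserts rather than verifies that the $E_1$-preserving automorphisms form exactly the dihedral subgroup---while you spell this out via $\Aut(C_n)\cong D_n$; but the logic is the same.

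One small expository point: your sentence ``it is not edge-transitive; in particular no automorphism carries an edge of $E_1$ to an edge of $E_k$'' hides a step. Non-edge-transitivity alone does not immediately forbid \emph{some} automorphism from mapping an $E_1$-edge to an $E_k$-edge; you need the (easy) fact that rotations already act transitively on $E_1$ and on $E_k$ separately, so that any single $E_1\to E_k$ map would combine with rotations to give full edge-transitivity. This is exactly how the paper phrases it, and you effectively have all the ingredients---just move your observation that rotations are automorphisms ahead of this line, or cite Theorem~\ref{thm: E_1 and E_k permutations} here rather than in the following sentence.
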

\begin{proof}
The dihedral group $D_n$ is always isomorphic to the subgroup of $\operatorname{Aut}(C_n(1,k)$) consisting of automorphisms that carry $E_1$ to $E_1$ and $E_k$ to $E_k$. These symmetries act transitively on $E_1$ and $E_k$, so if $\Aut(C_n(1,k))$ were to contain any automorphism carrying an edge from $E_1$ to $E_k$, or vice versa, then the compositions of this automorphism with suitable dihedral symmetries would yield automorphisms causing $\Aut(C_n(1,k))$ to be edge-transitive and hence implying that $C_n(1,k)$ satisfies conclusion (1) or (2) in Theorem~\ref{thm: PW automorphisms}, a contradiction to our hypothesis.
\end{proof}

We conclude this section by describing the automorphism groups of the edge-transitive graphs in (1) in Theorem~\ref{thm: PW automorphisms}, those graphs $C_n(1,k)$ for which $k^2 \equiv \pm 1 \pmod{n}$. These groups are more elaborate than dihedral groups, but not by much.

\begin{thm} \label{thm: Aut when k^2 equiv pm 1}
If $k$ and $n$ are integers satisfying $1 < k < n/2$ and $k^2 \equiv \pm 1 \pmod{n}$, then $|\Aut(C_n(1,k))|=4n$, and for any two edges $v_a v_b, v_s v_t$ in the graph, there is a unique automorphism sending $v_a$ to $v_s$ and $v_b$ to $v_t$.
\end{thm}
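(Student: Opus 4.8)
The plan is to establish two things: a lower bound $|\Aut(C_n(1,k))| \geq 4n$ by exhibiting enough automorphisms explicitly, and the matching upper bound together with the uniqueness claim, which really amounts to showing that a dart-fixing automorphism must be the identity. Since Theorem~\ref{thm: PW automorphisms} already tells us that such a graph is dart-transitive, the group acts transitively on the $4n$ darts (ordered edges) of the tetravalent graph; hence $|\Aut(C_n(1,k))| = 4n \cdot |\mathrm{stab}(\text{a dart})|$, and the whole statement collapses to: \emph{the stabilizer of a dart is trivial}. This is the cleanest route, so I would state it as a lemma at the start of the proof and reduce everything to it.

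First I would set up coordinates. Fix the dart $(v_0, v_1)$ along an edge of $E_1$, and suppose $\phi$ is an automorphism with $\phi(v_0) = v_0$, $\phi(v_1) = v_1$. The key structural input is the local picture near $v_0v_1$ developed in the proof of Theorem~\ref{thm: E_1 and E_k permutations}: when $k \neq 3$ (and $k \neq n/2-1$, which holds here unless $(n,k)$ is a small exception handled separately, and one checks $k^2 \equiv \pm 1$ forces $n$ large), the edge $v_0v_1$ lies in exactly two $4$-cycles, namely $v_0v_1v_{k+1}v_k$ and $v_0v_1v_{1-k}v_{-k}$, and no $4$-cycle contains $v_0v_1$ together with an incident $E_1$-edge. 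So $\phi$ must permute $\{v_0v_1v_{k+1}v_k,\ v_0v_1v_{1-k}v_{-k}\}$; since $\phi$ fixes the ordered edge $(v_0,v_1)$, it cannot swap a $4$-cycle attached at $v_0$ with one attached at $v_1$ in a way that moves $v_0$ or $v_1$, so in fact each of these two $4$-cycles is fixed setwise, forcing $\phi(v_k) = v_k$ (the unique common neighbor of $v_1$ and $v_{k+1}$ other than... — one must check that the two $4$-cycles cannot be interchanged, which follows because they meet only in the edge $v_0v_1$ and the fixed endpoints distinguish them) and similarly $\phi(v_{-k}) = v_{-k}$, $\phi(v_{k+1}) = v_{k+1}$, $\phi(v_{1-k}) = v_{1-k}$. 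Thus the whole closed neighborhood $\{v_{-k}, v_{-1}, v_0, v_1, v_k, v_{k+1}\}$ and beyond is pinned down step by step. Then I would run the same "inductive outward" argument as in Theorem~\ref{thm: E_1 and E_k permutations}: knowing $\phi$ fixes two consecutive $E_1$-vertices $v_i, v_{i+1}$ and their attached $4$-cycles, one deduces $\phi$ fixes $v_{i+2}$, and by induction $\phi$ fixes every $v_i$, so $\phi$ is the identity. The case $k = 3$ needs the separate $4$-cycle count already recorded in Theorem~\ref{thm: E_1 and E_k permutations} (five $4$-cycles through an $E_1$-edge, three through an $E_3$-edge, for $n > 12$), plus direct verification for the finitely many small $n$ with $3^2 \equiv \pm 1 \pmod n$ — but $9 \equiv \pm 1$ only for $n \in \{8,10,5,4,2\}$, so in fact $k=3$ never arises under the hypotheses except in already-excluded tiny cases, which simplifies matters considerably; I would note this.

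For the lower bound $|\Aut(C_n(1,k))| \geq 4n$, I would exhibit the automorphisms directly rather than invoking dart-transitivity abstractly (though that suffices once we know the action is faithful and transitive on $4n$ darts). The dihedral group $D_n$ of rotations $v_i \mapsto v_{i+1}$ and reflection $v_i \mapsto v_{-i}$ always sits inside $\Aut(C_n(1,k))$, giving $2n$ automorphisms preserving $E_1$. The condition $k^2 \equiv \pm 1 \pmod n$ is exactly what is needed for the "multiplication by $k$" map $v_i \mapsto v_{ki}$ to be a graph automorphism: it sends differences $1 \mapsto k$ and $k \mapsto k^2 \equiv \pm 1$, hence swaps (the roles of) $E_1$ and $E_k$ as edge classes, so it is not in $D_n$; composing it with the $2n$ dihedral maps yields $2n$ further automorphisms. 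One checks these $4n$ maps are distinct (the dihedral ones fix $E_1$ setwise, the others do not), giving $|\Aut(C_n(1,k))| \geq 4n$. Combining with the dart-stabilizer being trivial and dart-transitivity gives equality and the uniqueness statement simultaneously.

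The main obstacle I anticipate is the bookkeeping in the "each $4$-cycle is fixed, not swapped" step: I must argue carefully that a dart-fixing $\phi$ cannot interchange the two $4$-cycles through $v_0v_1$, and cannot act nontrivially within a fixed $4$-cycle in a way that moves $v_k$ or $v_{k+1}$. This is where the precise list $j \in \{-k,-1,k\} \cap \{2-k,3,k+2\}$ of potential "shortcut" common neighbors from Theorem~\ref{thm: E_1 and E_k permutations} must be reused to guarantee that the two endpoints $v_0$ and $v_1$ of the fixed dart, together with their distinct degree/adjacency roles inside each $4$-cycle, rigidify the cycle. Once that local rigidity is in hand, the inductive propagation and the global conclusion are routine, paralleling arguments already made in the excerpt.
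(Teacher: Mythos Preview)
Your orbit--stabilizer framework is sound and leads to the same endpoint as the paper, but the route you take for the dart-stabilizer has a genuine gap at precisely the spot you flagged as the ``main obstacle.''

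The two $4$-cycles $v_0v_1v_{k+1}v_k$ and $v_0v_1v_{1-k}v_{-k}$ are \emph{not} distinguished by the fixed dart. In each cycle, $v_0$ has exactly one $E_k$-neighbor and $v_1$ has exactly one $E_k$-neighbor; the local picture is completely symmetric under $v_k \leftrightarrow v_{-k}$, $v_{k+1} \leftrightarrow v_{1-k}$. The common-neighbor list $\{-k,-1,k\}\cap\{2-k,3,k+2\}$ you cite from the proof of Theorem~\ref{thm: E_1 and E_k permutations} concerns $4$-cycles containing the \emph{path} $v_0v_1v_2$, not the two $4$-cycles through the single edge $v_0v_1$, so it does not address the swap. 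Nothing purely local does, and your proposal does not supply an alternative.

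The paper sidesteps this entirely by invoking Theorem~\ref{thm: E_1 and E_k permutations} at the level of its \emph{conclusion} rather than its internal $4$-cycle machinery: any automorphism carries $E_1$ to $E_1$ or $E_1$ to $E_k$ globally. If $\phi$ fixes the dart $(v_0,v_1)$ then it fixes an $E_1$-edge, so $\phi(E_1)=E_1$; hence $\phi(v_2)\in\{v_0,v_2\}\setminus\{\phi(v_0)\}=\{v_2\}$, and by induction $\phi$ is the identity on all of $V$. No $4$-cycle bookkeeping is needed at all. On the existence side the paper packages your dihedral-plus-multiplication-by-$k$ maps into the single formula $\phi_{st}(v_i)=v_{s+(t-s)i}$ and verifies it directly from $k^2\equiv\pm 1$; these are the same $4n$ maps you describe, so there you and the paper agree. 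One minor point: Theorem~\ref{thm: PW automorphisms} runs from edge-transitivity to the list (1)--(2), not the reverse, so you cannot cite it for dart-transitivity at the outset --- but since you also construct the $4n$ automorphisms explicitly, this circularity is harmless.
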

\begin{proof}
By symmetry it suffices to show that for any edge $v_s v_t$ in the graph, there is a unique automorphism sending $v_0$ to $v_s$ and $v_1$ to $v_t$. We claim that this map  is $\phi_{st}$ on $\{v_0,\dots,v_{n-1}\}$ given by $\phi_{st}(v_i) = v_{s+(t-s)i}$ (with all operations performed modulo $n$). To see that is indeed an automorphism, note that the pair $v_x,v_y$ of vertices in $C_n(1,k)$ is adjacent if and only if $x-y$ is congruent modulo $n$ to either $\pm 1$ or $\pm k$. Now the difference in the indices of $\phi_{st}(v_x)$ and $\phi_{st}(v_y)$ is \begin{equation} \label{eq: automorphism proof} s+(t-s)x - s - (t-s)y = (t-s)(x-y).\end{equation} Since $v_s v_t$ is an edge in $C_n(1,k)$, we know that $t-s \in \{\pm 1, \pm k\}$. Recalling that $k^2 \equiv \pm 1 \pmod{n}$, it is straightforward to check that $x-y \in \{\pm 1,\pm k\}$ if and only if $(t-s)(x-y) \in \{\pm 1, \pm k\}$, so $\phi_{st}$ is an element of $\Aut(C_n(1,k))$.

To finish our proof, we show the uniqueness of the automorphism respectively mapping $v_0$ and $v_1$ to $v_s$ and $v_t$. Let $\rho$ be any automorphism of $C_n(1,k)$ mapping vertex $v_0$ to $v_s$ and $v_1$ to $v_t$. Taking $i \in \{1,k\}$ to be the index such that $v_sv_t \in E_i$, Theorem~\ref{thm: E_1 and E_k permutations} implies that $\rho$ carries all edges in $E_1$ to edges in $E_i$, so since $v_2$ is adjacent to $v_1$ along an edge from $E_1$, its image $\rho(v_2)$ is adjacent to $v_t$ along an edge from $E_i$. Since $\rho(v_2) \neq \rho(v_0)$, and $v_t$ only has two neighbors along edges from $E_i$, $\rho(v_2)$ is uniquely determined; we have $\rho(v_2) = \phi_{st}(v_2)$. Continuing inductively through all the vertices $v_2,\dots,v_{n-1}$, we conclude that $\rho = \phi_{st}$, as desired.
\end{proof}

\section{The graphs $C_n(1,n/2-1)$} \label{sec: k = n/2-1}
In this section we give an optimal distinguishing proper coloring of the edge-transitive graphs described in (2) in Theorem~\ref{thm: PW automorphisms}. 
In these graphs $n$ is even, and each vertex $v_i$ is adjacent to the same neighbors to which its \emph{antipodal vertex} $v_{i + n/2}$ is. It follows that $C_n(1,n/2-1)$ is isomorphic to the \emph{wreath graph} $W(n/2,2)$; in general, the wreath graph $W(a,b)$ has $ab$ vertices, partitioned into $a$ independent sets $I_1,\dots,I_a$, each of size $b$, with the vertices in each set $I_i$ being adjacent to all vertices in $I_{i-1}$ and $I_{i+1}$ (with operations in subscripts performed modulo $a$). The graph $C_{12}(1,5)$ and its interpretation as $W(6,2)$ are illustrated in Figure~\ref{fig: C12(1,5)}.

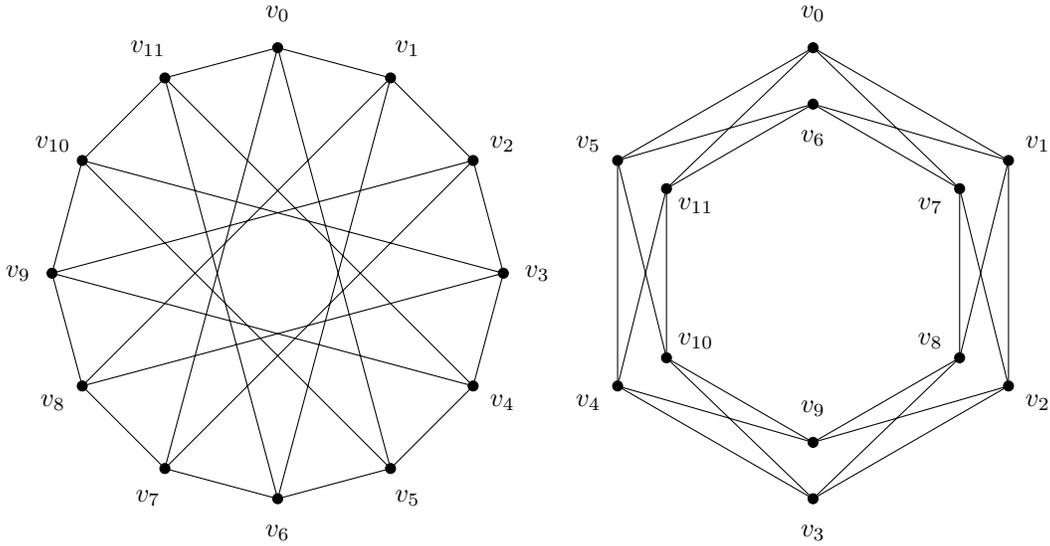
\begin{figure} 
\centering
\begin{tikzpicture}[scale=1.5]
     \node  at (90:2.3) {$v_0$};
     \node  at (60:2.3) {$v_1$};
     \node   at (30:2.3) {$v_2$};
     \node   at (0:2.3) {$v_3$};
     \node   at (330:2.3) {$v_4$};
     \node   at (300:2.3) {$v_5$};
     \node   at (270:2.3) {$v_6$};
     \node   at (240:2.3) {$v_7$};
     \node   at (210:2.3) {$v_8$};
     \node   at (180:2.3) {$v_9$};
     \node   at (150:2.3) {$v_{10}$};
     \node  at (120:2.3) {$v_{11}$};
     \node (0) [draw,shape=circle, fill=black, scale=.4] at (90:2) {};
     \node (1) [draw,shape=circle, fill=black, scale=.4] at (60:2) {};
     \node (2) [draw,shape=circle, fill=black, scale=.4] at (30:2) {};
     \node (3) [draw,shape=circle, fill=black, scale=.4] at (0:2) {};
     \node (11) [draw,shape=circle, fill=black, scale=.4] at (120:2) {};
     \node  (10) [draw,shape=circle, fill=black, scale=.4] at (150:2) {};
     \node (9) [draw,shape=circle, fill=black, scale=.4] at (180:2) {};
     \node (8) [draw,shape=circle, fill=black, scale=.4] at (210:2) {};
     \node (7) [draw,shape=circle, fill=black, scale=.4] at (240:2) {};
     \node (6) [draw,shape=circle, fill=black, scale=.4] at (270:2) {};
     \node (5) [draw,shape=circle, fill=black, scale=.4] at (300:2) {};
     \node (4)  [draw,shape=circle, fill=black, scale=.4] at (330:2) {};
     \draw (0)--(1)--(2)--(3)--(4)--(5)--(6)--(7)--(8)--(9)--(10)--(11)--(0);
     \draw (0)--(5)--(10)--(3)--(8)--(1)--(6)--(11)--(4)--(9)--(2)--(7)--(0);
\end{tikzpicture}
    \begin{tikzpicture}[scale=1.5]
     \node  at (90:2.3) {$v_0$};
     \node  at (30:2.3) {$v_1$};
     \node   at (330:2.3) {$v_2$};
     \node   at (270:2.3) {$v_3$};
     \node   at (210:2.3) {$v_4$};
     \node   at (150:2.3) {$v_5$};
     \node   at (90:1.2) {$v_6$};
     \node   at (30:1.2) {$v_7$};
     \node   at (330:1.2) {$v_8$};
     \node   at (270:1.2) {$v_9$};
     \node   at (210:1.2) {$v_{10}$};
     \node  at (150:1.2) {$v_{11}$};
     \node (0) [draw,shape=circle, fill=black, scale=.4] at (90:2) {};
     \node (1) [draw,shape=circle, fill=black, scale=.4] at (30:2) {};
     \node (2) [draw,shape=circle, fill=black, scale=.4] at (330:2) {};
     \node (3) [draw,shape=circle, fill=black, scale=.4] at (270:2) {};
     \node (11) [draw,shape=circle, fill=black, scale=.4] at (150:1.5) {};
     \node  (10) [draw,shape=circle, fill=black, scale=.4] at (210:1.5) {};
     \node (9) [draw,shape=circle, fill=black, scale=.4] at (270:1.5) {};
     \node (8) [draw,shape=circle, fill=black, scale=.4] at (330:1.5) {};
     \node (7) [draw,shape=circle, fill=black, scale=.4] at (30:1.5) {};
     \node (6) [draw,shape=circle, fill=black, scale=.4] at (90:1.5) {};
     \node (5) [draw,shape=circle, fill=black, scale=.4] at (150:2) {};
     \node (4)  [draw,shape=circle, fill=black, scale=.4] at (210:2) {};
     \draw (0)--(1)--(2)--(3)--(4)--(5)--(6)--(7)--(8)--(9)--(10)--(11)--(0);
     \draw (0)--(5)--(10)--(3)--(8)--(1)--(6)--(11)--(4)--(9)--(2)--(7)--(0);  
    \end{tikzpicture}
    \caption{Two drawings showing $C_{12}(1,5) \cong W(6,2)$.}
    \label{fig: C12(1,5)}
\end{figure}

The automorphism group of $C_n(1,k)$, when $k=n/2-1$, contains ``dihedral'' symmetries interpreted as acting on the graph when drawn as on the left in Figure~\ref{fig: C12(1,5)}. In addition, however, there are $n/2$ symmetries that interchange two vertices having the same neighborhood and fix all other vertices. (Vertices with the same neighborhood are called \emph{twins}.) Hence $\Aut(C_n(1,k))$ has many more than $2n$ automorphisms. Intuitively, this imposes more restrictions on distinguishing colorings (in particular, vertices having the same neighborhoods must receive distinct colors). Hence we may expect to need more colors than we do with $C_n$ to break all symmetries of $C_n(1,k)$ with a proper coloring, and indeed this is the case.

\begin{thm}\label{thm: even n and k= n/2-1}
For even integers $n \geq 8$, \[\chi_D(C_n(1,n/2-1)) = \begin{cases}
6 & \text{ if }n = 6;\\
8 & \text{ if }n = 8;\\
5 & \text{ if }n \geq 10.
\end{cases}\]
\end{thm}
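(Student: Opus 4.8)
Noting first that the statement contains a typographical slip—the case $n=6$ cannot occur under the hypothesis $n \ge 8$, and $C_6(1,2)$ has already been handled in the introduction—I would silently restrict attention to the two genuine cases $n=8$ and $n\ge 10$, treating the $n=6$ line as inherited from the earlier discussion. The plan is to exploit the identification $C_n(1,n/2-1)\cong W(n/2,2)$ throughout, so that the graph is viewed as $m=n/2$ independent ``twin-pairs'' $P_0,\dots,P_{m-1}$ arranged in a cycle, with every vertex of $P_i$ adjacent to every vertex of $P_{i-1}\cup P_{i+1}$. The automorphism group is the wreath product $S_2 \wr D_m$: each twin-pair may be internally swapped (giving the $2^m$ twin-swaps) and the pairs may be permuted by any dihedral symmetry of the $m$-cycle, for a total of $2^m \cdot 2m$ automorphisms.

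For $n=8$ we have $m=4$, and $C_8(1,3)\cong W(4,2)\cong K_{4,4}$ minus a perfect matching—actually $W(4,2)$ is the complete bipartite graph $K_{4,4}$ with the two ``diagonal'' pairs... more carefully, $W(4,2)$ is the cocktail-party-like graph that is in fact $K_{4,4}$ itself is wrong; rather $W(4,2)$ has each of the four pairs adjacent to its two neighboring pairs, so it is the complete bipartite graph $K_{4,4}$ with parts $P_0\cup P_2$ and $P_1\cup P_3$. Since $K_{4,4}$ is complete bipartite, the Collins–Trenk theorem that $\chi_D(G)=|V(G)|$ exactly for complete multipartite graphs gives $\chi_D=8$ immediately; I would just cite this. (If in fact $W(4,2)$ is not literally $K_{4,4}$, one argues directly: twins force the two vertices of each pair to get distinct colors, and any two non-twin vertices in the same part are interchangeable by a dihedral rotation unless their colors differ, forcing all eight colors distinct.)

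The substantive case is $n\ge 10$, i.e.\ $m\ge 5$. For the lower bound $\chi_D\ge 5$: twins force each pair $P_i$ to receive two distinct colors, so in a proper coloring with colors $\{1,\dots,r\}$ each pair gets an unordered $2$-element ``color set'' $S_i$, and properness means $S_i\cap S_{i+1}=\emptyset$. With $r=3$ there is only one $2$-set disjoint from a given $2$-set, forcing the $S_i$ to alternate, which requires $m$ even and yields a highly symmetric pattern; with $r=4$ I would show by a short combinatorial argument on the sequence $(S_0,\dots,S_{m-1})$ of $2$-subsets of a $4$-set (consecutive ones disjoint, i.e.\ complementary) that it must be $2$-periodic, again alternating between a $2$-set and its complement, and that no choice of how to orient the pairs internally can kill both a reflection and a twin-swap simultaneously; this forces $r\ge 5$. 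I expect the $r=4$ elimination to be the main obstacle, since one must rule out all ways of breaking the considerable residual symmetry, and the cleanest route is probably: show the color-set sequence is forced to be periodic with period $2$, deduce a nontrivial rotation by $2$ (or a reflection) that permutes pairs while respecting color sets, then check that for any internal orientation of the twins this rotation can be ``corrected'' by twin-swaps into a genuine color-preserving automorphism.

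For the upper bound $\chi_D\le 5$ when $m\ge 5$, I would exhibit an explicit $5$-coloring and verify it is distinguishing. A natural template: use colors $1,2$ on ``most'' pairs in an alternating fashion and introduce colors $3,4,5$ on a few designated pairs near index $0$ to pin down the dihedral action, choosing the pattern asymmetrically enough that no reflection survives, and assigning the two colors within each pair so that no twin-swap survives either (a twin-swap on $P_i$ is killed as soon as the two neighbors' color multisets distinguish the two vertices of $P_i$, which in a generic pattern they do). Concretely one can set $S_0=\{3,4\}$, $S_1=\{1,5\}$, $S_2=\{2,3\}$ and $S_i=\{1,2\}$ or $\{1,3\}$ alternately for $i\ge 3$ in a way compatible with properness at the wrap-around, then argue: (i) the unique pair whose color set meets $\{4,5\}$ or the unique two such pairs pin the rotation and orientation of the cycle, killing all of $D_m$ except the identity; (ii) within each pair the ordered pair of colors, read against the (now rigid) neighboring pairs, is asymmetric, killing all twin-swaps. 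The verification is routine once the pattern is fixed; the only care needed is the small-case boundary $m=5,6,7$ where the ``few special pairs'' nearly fill the cycle, which I would check by hand or by a slightly adjusted pattern.
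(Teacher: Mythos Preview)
Your approach is essentially the paper's: exploit the wreath structure $C_n(1,n/2-1)\cong W(m,2)$ with $m=n/2$, encode a proper coloring by the cyclic sequence $(S_0,\dots,S_{m-1})$ of twin color-sets (consecutive sets disjoint), rule out $r\le 4$ by showing the $S_i$ must alternate between a $2$-set and its complement, and for the upper bound exhibit a sequence in which enough color-sets occur uniquely to kill the dihedral action on the pairs. Two points deserve correction.

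First, your concrete $5$-coloring is not proper: $S_2=\{2,3\}$ meets both $\{1,2\}$ and $\{1,3\}$, so neither candidate for $S_3$ works. The paper's fix is clean: a proper coloring is precisely a closed walk in the Kneser graph $KG_{5,2}$ (the Petersen graph), and one takes the walks
\[
12,\ 34,\ 15,\ \underline{23,\ 45}\quad(m\text{ odd}),\qquad 12,\ 34,\ 25,\ 14,\ \underline{23,\ 45}\quad(m\text{ even}),
\]
repeating the underlined segment to reach length $m$. Each non-underlined color-set then occurs exactly once, so the corresponding twin-pairs are fixed by any color-preserving automorphism; since these include at least three consecutive pairs, the induced $\sigma\in D_m$ is the identity, uniformly for all $m\ge 5$ with no separate small-case check.

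Second, your step~(ii) is unnecessary. Color-preserving automorphisms are in bijection with those $\sigma\in D_m$ satisfying $S_{\sigma(i)}=S_i$ for all $i$: once $\sigma$ is forced to be the identity, the only remaining automorphisms are products of twin-swaps, and a twin-swap on $P_i$ exchanges two vertices carrying \emph{distinct} colors, so none is color-preserving. Incidentally, your lower-bound discussion at $r=4$ is actually more careful than the paper's on exactly this mechanism: you correctly observe that the period-$2$ rotation of pairs must be composed with suitable twin-swaps to become a color-preserving automorphism, whereas the paper asserts (not quite correctly for every $4$-coloring) that $v_i\mapsto v_{i+2}$ itself preserves colors.
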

\begin{proof}
It is easy to see that $C_6(1,2)$ is isomorphic to $K_{2,2,2}$ and that $C_8(1,3)$ is isomorphic to $K_{4,4}$, which are both complete multipartite graphs. This implies (see Section~\ref{sec: intro}) that $\chi_D(C_6(1,2)) = 6$ and $\chi_D(C_8(1,3)) = 8$. Suppose henceforth that $n \geq 10$. As mentioned above, a distinguishing coloring must assign different colors to twins, and for $n \geq 10$ the vertex set of $C_n(1,n/2-1)$ is partitioned into $n/2$ such twin pairs. Since our coloring is to be proper, it cannot use the same color on two vertices from ``consecutive'' pairs $\{v_i,v_{i+n/2}\}$ and $\{v_{i+1},v_{i+1+n/2}\}$. Such a coloring must then use at least 4 colors, but 4 colors are not enough for a proper coloring if $n/2$ is odd, and if $n/2$ is even, the 4-coloring of $C_n(1,n/2-1)$ (which is unique up to permutations of the colors) admits the color-preserving automorphism defined by $v_i \mapsto v_{i+2}$ for all $i$. 

Hence at least 5 colors are necessary for a distinguishing coloring. Note now that the pairs of colors assigned to pairs of twins naturally correspond to vertices in the Kneser graph $KG_{c,2}$, where $c$ is the number of colors used in the coloring. (Recall that the Kneser graph $KG_{p,q}$ is the graph whose vertices are the $q$-element subsets of a set of $p$ elements, with edges joining vertices corresponding to disjoint subsets.) Moving from one pair of 
colored twins to the consecutively following pair and noting the colors used corresponds to moving along edges in $KG_{c,2}$, and the overall coloring of $C_n(1,n/2-1)$ corresponds to a closed walk in $KG_{c,2}$. For a proper coloring, any such closed walk will do, and a useful walk in $KG_{5,2}$ (i.e., the Petersen graph) consists of the pairs \[
\begin{cases}
12,34,15,\underline{23,45} & \text{if $n/2$ is odd,}\\
12,34,25,14,\underline{23,45} & \text{if $n/2$ is even;}
\end{cases}\]
here the underlined pairs are repeated as necessary to produce $n/2$ pairs of colors. Note that in each corresponding coloring of $C_n(1,n/2-1)$, there is at most one pair of twins receiving a color pairs from $\{12,34,15,25,14\}$. Any automorphism maps a pair of twins to a pair of twins, and each vertex in $C_n(1,n/2-1)$ belongs to a unique pair of twins when $n/2 \geq 5$. Color-preserving automorphisms likewise preserve the colors on pairs of twins, so the vertices in the twin pairs whose colors were just listed must be fixed any such automorphism. By inductively moving to neighboring twin pairs along the wreath, we see that every other vertex must be fixed, so the only color-preserving automorphism is the identity, as desired.
\end{proof}

\section{A general upper bound} \label{sec: chi plus 1}
Having determined the symmetries of $C_n(1,k)$ when $k \neq n/2 -1$ in Section~\ref{sec: automorphism groups}, for the remaining sections we turn to distinguishing colorings. In this section we show that in many cases, the distinguishing chromatic number of $C_n(1,k)$ is at most 1 more than the ordinary chromatic number. This will allow us to exactly determine the distinguishing chromatic number whenever $C_n(1,k)$ is bipartite.

Our first result is useful in ``breaking'' symmetries in edge-transitive graphs $C_n(1,k)$.
\begin{lem} \label{lem: one more color}
Suppose that $b$ is an integer such that $1<b<n/2$ and $b \neq k$ and $\gcd(b,n) = 1$. If $C_n(1,k)$ is properly colored with $\chi(C_n(1,k))$ colors, and the colors on $v_0$ and $v_b$ are changed to be a new previously unused color, then this coloring is not preserved by any automorphism of $C_n(1,k)$ that carries $E_1$ to $E_k$.
\end{lem}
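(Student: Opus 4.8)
The plan is to argue by contradiction: suppose some automorphism $\phi$ carries $E_1$ to $E_k$ and preserves the modified coloring. Since only $v_0$ and $v_b$ carry the new color, $\phi$ must permute the set $\{v_0, v_b\}$. The key is to use the explicit description of such automorphisms. Because $C_n(1,k)$ admits an automorphism carrying $E_1$ to $E_k$, it is edge-transitive, so by Theorem~\ref{thm: PW automorphisms} we have $k^2 \equiv \pm 1 \pmod n$ (the case $k = n/2-1$ is excluded by hypothesis, and $(n,k)=(10,3)$ would have to be checked separately or is excluded by the standing assumptions invoked via Theorem~\ref{thm: E_1 and E_k permutations}). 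Then Theorem~\ref{thm: Aut when k^2 equiv pm 1} tells us every automorphism has the form $\phi_{st}(v_i) = v_{s+(t-s)i}$, and the condition that $\phi$ carries $E_1$ to $E_k$ forces $t - s \equiv \pm k \pmod n$; write $\phi(v_i) = v_{s + ki}$ after absorbing a sign (replacing $k$ by $-k$ if needed, which just reverses orientation).

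Next I would use the constraint $\{\phi(v_0), \phi(v_b)\} = \{v_0, v_b\}$, i.e. $\{v_s, v_{s+kb}\} = \{v_0, v_b\}$. This gives two cases. In the first, $s \equiv 0$ and $kb \equiv b \pmod n$; since $\gcd(b,n)=1$ this forces $k \equiv 1 \pmod n$, contradicting $1 < k < n/2$. In the second, $s \equiv b$ and $s + kb \equiv 0$, i.e. $b(k+1) \equiv 0 \pmod n$; again $\gcd(b,n)=1$ forces $k \equiv -1 \pmod n$, i.e. $k = n-1$, contradicting $k < n/2$. Either way we reach a contradiction, so no such color-preserving automorphism exists. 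The point where $\gcd(b,n)=1$ is used is exactly here — it lets us cancel $b$ modulo $n$.

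The main obstacle I anticipate is being careful about the sign ambiguity in "$t-s \in \{\pm 1, \pm k\}$" and about whether $(n,k)=(10,3)$ needs separate handling. Concretely, an automorphism carrying $E_1$ to $E_k$ has $t-s \equiv k$ or $t - s \equiv -k$, and I should handle both by noting that $v_i \mapsto v_{s - ki}$ is the composition of $v_i \mapsto v_{s+ki}$ with a reflection; in both subcases the relation we extract is $kb \equiv \pm b$, and $\gcd(b,n)=1$ kills both. For $(n,k) = (10,3)$: here $k^2 = 9 \equiv -1 \pmod{10}$, so this graph is in family (1) and Theorem~\ref{thm: Aut when k^2 equiv pm 1} still applies, so the argument goes through unchanged; no separate treatment is needed, though I should double-check that $b=3$ is excluded (it is, since $b \neq k$) and that some valid $b$ with $\gcd(b,10)=1$ and $1<b<5$ exists — but the lemma is stated for a given such $b$, so existence is the caller's concern. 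Finally, I would remark that this lemma applies to edge-transitive graphs precisely because those are the ones possessing an automorphism carrying $E_1$ to $E_k$; on non-edge-transitive graphs the hypothesis is vacuous by Corollary~\ref{cor: dihedral group}.
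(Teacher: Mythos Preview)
Your argument is correct and reaches exactly the same modular equation $kb \equiv \pm b \pmod{n}$ that the paper does, but you get there by a different route. The paper does not invoke the classification of automorphisms at all: it simply observes that any automorphism exchanging $E_1$ and $E_k$ must send the two $E_k$-paths between $v_0$ and $v_b$ to the two $E_1$-paths between them, and compares their lengths (which are $b,\,n-b$ along $E_1$ and $bk^{-1},\,n-bk^{-1}$ along $E_k$). Matching the multisets gives $bk \equiv \pm b$, and then $\gcd(b,n)=1$ finishes it. Your approach instead pins down the automorphism explicitly as $v_i \mapsto v_{s\pm ki}$ via Theorem~\ref{thm: Aut when k^2 equiv pm 1} and solves the two-element set equation directly.

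The paper's route is slightly more robust: it needs nothing about the structure of $\Aut(C_n(1,k))$ beyond the hypothesis that $\phi$ swaps $E_1$ and $E_k$, so it applies uniformly without worrying about whether Theorem~\ref{thm: Aut when k^2 equiv pm 1} is available. In particular, your claim that $k=n/2-1$ is ``excluded by hypothesis'' is not literally true of the lemma as stated, and your claim that Theorem~\ref{thm: Aut when k^2 equiv pm 1} applies to $(n,k)=(10,3)$ is incorrect (there $|\Aut|=240$, not $40$); the paper's path-length argument sidesteps both issues. That said, your underlying reasoning survives: any automorphism carrying the Hamiltonian cycle $E_1$ onto the cycle $E_k$ (which forces $\gcd(k,n)=1$) must have the form $v_i\mapsto v_{s\pm ki}$ just by how it acts on consecutive vertices of the cycle, so you don't actually need the full force of Theorem~\ref{thm: Aut when k^2 equiv pm 1}. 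Once that is noted, the two arguments are really the same computation expressed combinatorially versus algebraically.
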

\begin{proof}
Suppose that $c:V(C_n(1,k)) \to \{1,\dots,\chi(C_n(1,k))+1\}$ is the modified coloring, and let $\phi$ be any automorphism of $C_n(1,k)$ that exchanges $E_1$ and $E_k$. Note that there are two internally vertex-disjoint paths from $v_0$ to $v_b$ along edges of $E_1$, and similarly two such paths from $v_0$ to $v_b$ along edges of $E_k$. Since $\phi$ exchanges $E_1$ and $E_k$, if $\phi$ were to preserve the coloring, then $v_0,v_b$ would either be fixed or mapped to each other under $\phi$, and the paths between them along edges of $E_k$ would be carried to the paths between $v_0,v_b$ using edges of $E_1$. Since these paths along edges in $E_1$ have lengths $b$ and $n-b$, the paths along edges in $E_k$ must have the same lengths. Hence either $bk \equiv b \pmod{n}$ or $bk \equiv n-b \pmod{n}$. Since $\gcd(b,k) = 1$, $b$ has a multiplicative inverse modulo $n$, and these congruences yield $k \equiv 1 \pmod{n}$ or $k \equiv n-1 \pmod{n}$; both statements are contradictions.
\end{proof}

Thus any color-preserving automorphism of the modified coloring in Lemma~\ref{lem: one more color} must act as a dihedral symmetry on the edges of $E_1$. Since $b<n/2$, the only such automorphisms that either fix $v_0$ and $v_b$ or interchange them are the identity automorphism and a single reflection. This will yield a general bound; first we show that such an integer $b$ as in the hypothesis of Lemma~\ref{lem: one more color} always exists for large enough $n$.

\begin{lem}\label{lem: there is a b}
For any $n \geq 13$ and for any integer $k \in \{2,\dots,\lfloor n/2 \rfloor\}$, there exists an integer $b$ such that $1<b<n/2$ and $b \neq k$ and $\gcd(b,n) = 1$.
\end{lem}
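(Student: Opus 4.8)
The plan is to count. There are $\varphi(n)$ integers in $\{1,\dots,n\}$ coprime to $n$; since $n$ itself is not coprime to $n$ (as $n \ge 13 > 1$), all $\varphi(n)$ of these lie in $\{1,\dots,n-1\}$. From this set I want to exclude the ``bad'' values: $b=1$, $b=n-1$, and the at most one value of $b$ in $\{2,\dots,n-2\}$ equal to $k$ (note $1$ and $n-1$ are always coprime to $n$, while $k$ may or may not be). Restricting to $b \le \lfloor n/2 \rfloor$ rather than $b \le n-1$ is harmless because $b \mapsto n-b$ is a bijection on the integers coprime to $n$ that pairs each small one with a large one; so the number of admissible $b$ in $\{2,\dots,\lfloor n/2\rfloor\}\setminus\{k\}$ is at least $\tfrac12\varphi(n) - O(1)$. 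Concretely I would argue: among the integers coprime to $n$ in $\{2,\dots,n-2\}$, there are $\varphi(n)-2$ of them, split into $(\varphi(n)-2)/2$ pairs $\{b,n-b\}$ with exactly one member of each pair below $n/2$; removing the single pair that might contain $k$ (if $\gcd(k,n)=1$; otherwise remove nothing) still leaves at least $(\varphi(n)-2)/2 - 1 = (\varphi(n)-4)/2$ candidates. So it suffices to show $\varphi(n) \ge 5$ for all $n \ge 13$, which forces $(\varphi(n)-4)/2 > 0$.

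The main step is therefore the elementary lower bound $\varphi(n) \ge 5$ for $n \ge 13$. This is a standard fact: $\varphi(n) \le 4$ holds only for $n \in \{1,2,3,4,5,6,8,10,12\}$, all of which are less than $13$. I would justify it quickly by the formula $\varphi(n) = n\prod_{p \mid n}(1 - 1/p)$: if $n$ has a prime factor $p \ge 7$ then $\varphi(n) \ge \varphi(p) = p-1 \ge 6$ unless $n=p$ in which case still $\varphi(n)\ge 6$; if all prime factors of $n$ lie in $\{2,3,5\}$ and $n \ge 13$, a short finite check (e.g.\ $\varphi(16)=8$, $\varphi(15)=8$, $\varphi(18)=6$, $\varphi(20)=8$, $\varphi(24)=8$, $\varphi(25)=20$, $\varphi(27)=18$, and in general $n/\varphi(n) \le 15/4$ for such $n$) gives $\varphi(n) \ge 5$. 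Alternatively one can cite the classical classification of $n$ with $\varphi(n) \le 4$.

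The only subtlety — not really an obstacle — is bookkeeping the case distinction on whether $\gcd(k,n)=1$: if $k$ is not coprime to $n$, then $k$ is automatically not among our candidates and we only exclude the pair $\{1,n-1\}$, giving an even weaker requirement. I would write the count so that in all cases the number of valid $b$ is at least $\lceil (\varphi(n)-4)/2 \rceil \ge 1$, and conclude. I expect the whole argument to fit in a short paragraph once $\varphi(n)\ge 5$ is in hand; there is no genuine difficulty here, just the need to be careful that $b=k$ is the only ``interior'' forbidden value and that passing from $b<n$ to $b<n/2$ only costs a factor of two.
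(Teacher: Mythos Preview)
Your argument is correct. The only cosmetic wrinkle is that you state ``$\varphi(n)\ge 5$'' when what you actually need and use is $\varphi(n)\ge 6$; since $\varphi(n)$ is even for $n\ge 3$, the two are equivalent, and the classification of $n$ with $\varphi(n)\le 4$ (namely $n\in\{1,2,3,4,5,6,8,10,12\}$) settles it. Your pairing $b\leftrightarrow n-b$ cleanly handles the passage from $\{2,\dots,n-2\}$ to $\{2,\dots,\lfloor n/2\rfloor\}$ because $n/2$ is never coprime to $n$ for $n\ge 4$.

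This is a genuinely different route from the paper's proof. The paper does not count: instead it shows directly that there are at least \emph{two primes} in $\{2,\dots,\lfloor n/2\rfloor\}$ not dividing $n$, so that one of them can serve as $b$ even if $k$ equals the other. For $13\le n\le 22$ this is checked by hand; for $23\le n$ the paper first notes that at most three of $\{2,3,5,7,11\}$ can divide $n$ (else $n\ge 210$), and for the remaining large $n$ it invokes Bertrand's Postulate to produce three primes in disjoint dyadic intervals below $n/2$, at most one of which can divide $n$. Your totient-counting approach is shorter and avoids Bertrand entirely; the paper's approach, on the other hand, yields the slightly stronger conclusion that $b$ may be taken to be prime.
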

\begin{proof}
We will show the stronger statement that when $n \geq 13$, there exist two primes not dividing $n$ in $\{2,\dots,\lfloor n/2 \rfloor\}$. If $k$ were to equal one of these primes, then we could let $b$ be the other one.

It is easy to verify directly that the two primes specified exist for all $n$ satsifying $13 \leq n \leq 22$. Now suppose that $n \geq 23$. If $n$ is not relatively prime to two elements of $\{2,3,5,7,11\}$, then $n \geq 2\cdot 3 \cdot 5 \cdot 7= 210$.

Recall now the result known as Bertrand's Postulate, which states, in one formulation, that whenever $m$ is an integer greater than $3$, then there exists a prime number $p$ with $m < p < 2m$. It follows that there exist prime numbers $p_1,p_2,p_3$ such that $n/16 < p_1 < n/8$ and $n/8 < p_2 < n/4$ and $n/4 < p_3 < n/2$. If $n$ is not relatively prime to at least two of $p_1,p_2,p_3$, then $n \geq p_1p_2 > n^2/128$ and hence $n < 128$, a contradiction to our earlier bound on $n$.
\end{proof}

\begin{thm} \label{thm: non-palindrome}
Given integers $k,n$ such that $1 < k< n/2$ and a proper coloring coloring $c:V(C_n(1,k)) \to$ $\{1,\dots,\chi(C_n(1,k))\}$, let $b$ be an integer such that $1 < b < n/2$ and $b \neq k$ and $\gcd(n,b)=1$. If either 

\[c(v_1),c(v_2),\dots,c(v_{b-1}) \quad \text{or} \quad c(v_{b+1}),c(v_{b+2}),\dots,c(v_{n-1})\] 

is not a palindrome, then $\chi_D(C_{n}(1,k)) \leq 
\chi(C_n(1,k))+1$.
\end{thm}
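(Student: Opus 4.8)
The plan is to convert the given proper coloring $c$, which uses $\chi(C_n(1,k))$ colors, into a distinguishing proper coloring that uses only one more color. Let $c'$ be the coloring agreeing with $c$ everywhere except that $c'(v_0)=c'(v_b)=\chi(C_n(1,k))+1$, a color unused by $c$. First I would verify that $c'$ is proper: since $1<b<n/2$ and $b\neq k$, we have $b\notin\{1,k,n-1,n-k\}$, so $v_0$ and $v_b$ are non-adjacent, and every edge meeting $\{v_0,v_b\}$ joins the new color to an old one; thus $c'$ is proper, and $\{v_0,v_b\}$ is precisely the color class of the new color.

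Next I would take an arbitrary automorphism $\phi$ of $C_n(1,k)$ preserving $c'$ and argue that $\phi$ is the identity. By Theorem~\ref{thm: E_1 and E_k permutations} (whose hypotheses $k\neq n/2-1$ and $(n,k)\neq(10,3)$ are in force here), $\phi$ carries all of $E_1$ into $E_1$ or all of $E_1$ into $E_k$; and because $\gcd(b,n)=1$ and $c'$ is exactly the modified coloring described in Lemma~\ref{lem: one more color}, that lemma forbids the second alternative. Hence $\phi$ maps $E_1$ to $E_1$, so it restricts to an automorphism of the $n$-cycle on $v_0,\dots,v_{n-1}$ whose edge set is $E_1$; therefore $\phi$ acts on that cycle as a rotation $v_i\mapsto v_{i+a}$ or a reflection $v_i\mapsto v_{a-i}$. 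Since $\phi$ preserves $c'$, it sends the new-color class $\{v_0,v_b\}$ to itself.

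I would then enumerate the dihedral symmetries of that cycle sending $\{v_0,v_b\}$ to itself. A rotation $v_i\mapsto v_{i+a}$ doing so has $a\in\{0,b\}$: $a=0$ is the identity, and $a=b$ would force $2b\equiv 0\pmod{n}$, impossible because $0<2b<n$. A reflection $v_i\mapsto v_{a-i}$ doing so likewise has $a\in\{0,b\}$, and $a=0$ is excluded (it would send $v_b$ to $v_{-b}\notin\{v_0,v_b\}$), so the only nonidentity candidate is $r:v_i\mapsto v_{b-i}$, which swaps $v_0$ and $v_b$. The decisive point is that $r$ preserves $c'$ if and only if $c(v_i)=c(v_{b-i\bmod n})$ for all $i\notin\{0,b\}$: running $i$ over $\{1,\dots,b-1\}$ this says exactly that $c(v_1),\dots,c(v_{b-1})$ is a palindrome, and running $i$ over $\{b+1,\dots,n-1\}$ — a set carried to itself by $i\mapsto b-i\pmod{n}$, pairing $b+1$ with $n-1$, $b+2$ with $n-2$, and so on — it says exactly that $c(v_{b+1}),\dots,c(v_{n-1})$ is a palindrome. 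By hypothesis at least one of these fails, so $r$ does not preserve $c'$. Hence $\phi=\mathrm{id}$, $c'$ is a distinguishing coloring, and $\chi_D(C_n(1,k))\le\chi(C_n(1,k))+1$.

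I do not expect a real obstacle here. The one step needing care is the ``decisive point'' above: cleanly translating ``the reflection $v_i\mapsto v_{b-i}$ fixes the coloring'' into the conjunction of the two palindrome conditions, and in particular checking that $\{b+1,\dots,n-1\}$ is closed under $i\mapsto b-i\pmod{n}$ with the stated pairing, so that breaking just one of the two palindromes is exactly enough to kill the last surviving symmetry. Everything else — properness of $c'$, and ruling out $2b\equiv 0\pmod{n}$ via $0<2b<n$ — is routine bookkeeping.
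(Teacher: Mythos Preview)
Your argument is correct and follows essentially the same route as the paper: recolor $v_0$ and $v_b$ with a fresh color, invoke Lemma~\ref{lem: one more color} to rule out $E_1\leftrightarrow E_k$ automorphisms, reduce to dihedral symmetries of the Hamilton cycle, and observe that the lone surviving reflection $v_i\mapsto v_{b-i}$ is killed precisely when one of the two color sequences fails to be a palindrome. Your write-up simply makes explicit what the paper compresses into the phrase ``in light of Lemma~\ref{lem: one more color} and the discussion following it''; the only quibble is your parenthetical that the hypotheses $k\neq n/2-1$ and $(n,k)\neq(10,3)$ of Theorem~\ref{thm: E_1 and E_k permutations} ``are in force here'' --- they are not actually stated in Theorem~\ref{thm: non-palindrome}, though the paper's own proof tacitly relies on them in the same way.
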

\begin{proof}
In light of Lemma~\ref{lem: one more color} and the discussion following it, since one of  $c(v_1),c(v_2),\dots,c(v_{b-1})$ and $c(v_{b+1}),c(v_{b+2}),\dots,c(v_{n-1})$ is not a palindrome, recoloring $v_0$ and $v_b$ with a single new color yields a proper coloring where no reflection preserves the coloring; the only color-preserving automorphism of $C_n(1,k)$ is the identity. Hence $\chi_D(C_{n}(1,k)) \leq 
\chi(C_n(1,k))+1$.
\end{proof}

In certain cases, Theorem~\ref{thm: non-palindrome} quickly yields an optimal distinguishing coloring.

\begin{thm} \label{thm: n even k odd}
Given positive integers $k,n$ such that $1 < k <n/2-1$, if $n$ is even and $k$ is odd and $(n,k) \neq (10,3)$, then $\chi_D(C_{n}(1,k)) = 3$.
\end{thm}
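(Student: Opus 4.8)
The plan is to produce an explicit proper $3$-coloring of $C_n(1,k)$ that satisfies the hypotheses of Theorem~\ref{thm: non-palindrome} with a suitable choice of $b$. Since $n$ is even and $k$ is odd, Theorem~\ref{thm: chromatic num} gives $\chi(C_n(1,k)) = 2$, so a distinguishing coloring needs at least $3$ colors (a proper $2$-coloring is forced by the bipartition and is highly symmetric); thus it suffices to exhibit a $3$-distinguishing coloring. The idea is to start from the unique proper $2$-coloring, which assigns color $1$ to even-indexed vertices and color $2$ to odd-indexed vertices (this is proper since both $1$ and $k$ are odd, so every edge joins an even index to an odd index), and then perturb it by recoloring two vertices $v_0$ and $v_b$ with a new color $3$, where $b$ is chosen with $1 < b < n/2$, $b \neq k$, and $\gcd(b,n) = 1$. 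By Lemma~\ref{lem: one more color} together with the discussion after it, the only possible nontrivial color-preserving automorphism of the resulting coloring is a reflection fixing or swapping $v_0$ and $v_b$; to kill this we need the segment $c(v_1),\dots,c(v_{b-1})$ or the segment $c(v_{b+1}),\dots,c(v_{n-1})$ to fail to be a palindrome, and then Theorem~\ref{thm: non-palindrome} finishes the argument.

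First I would handle the range $n \geq 13$, where Lemma~\ref{lem: there is a b} guarantees the existence of a valid $b$. For such $b$, in the $2$-coloring the values $c(v_1),\dots,c(v_{b-1})$ alternate $2,1,2,1,\dots$. Among the two segments of interest, consider the longer one, $c(v_{b+1}),\dots,c(v_{n-1})$; it has $n-1-b$ entries, which is at least $n/2 - 1 \geq 5$. I want to argue this string is not a palindrome. In the plain $2$-coloring the string $c(v_{b+1}),\dots,c(v_{n-1})$ is an alternating string of parities, and an alternating string of length $m$ is a palindrome precisely when $m$ is odd; so if $n-1-b$ is even I already have a non-palindrome and I am done, choosing $b$ of the appropriate parity whenever Lemma~\ref{lem: there is a b} allows it. If I cannot control the parity of $b$, a cleaner fix is to perturb the $2$-coloring slightly more: recolor one additional vertex, say $v_{b+2}$, with color $3$ as well — but this threatens properness and the counting in Lemma~\ref{lem: one more color}, so instead the better route is to recolor a single well-chosen interior vertex from $1$ to $2$ or vice versa only if it stays proper, which for a bipartite graph it will not. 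The honest approach, and the one I expect the authors take, is: after recoloring $v_0$ and $v_b$ to color $3$, the string $c(v_1),\dots,c(v_{b-1})$ still alternates but now the string $c(v_{b+1}),\dots,c(v_{n-1})$ is alternating and the endpoints $c(v_{b+1})$ and $c(v_{n-1}) = c(v_{-1})$ may disagree — since the original alternation forces $c(v_{b+1})$ and $c(v_{n-1})$ to have opposite parities exactly when $n - b$ is odd, i.e., when $b$ is odd ($n$ even), and in that case the string is not a palindrome. Since $b$ can be taken odd (a prime $> 2$ not dividing $n$, which exists in the relevant range by the proof of Lemma~\ref{lem: there is a b}), this resolves all $n \geq 13$.

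The remaining obstacle, and the main one, is the small-$n$ range: the constraints $1 < k < n/2 - 1$ and $n$ even with $k$ odd force $n \geq 10$ (for $n = 8$ one would need $k = 3$ but $3 = n/2 - 1$; for $n \leq 7$ there is no valid odd $k$ in the open interval), and the excluded pair $(10,3)$ is removed by hypothesis. So the only small cases are $n \in \{10, 12\}$ with $(n,k) \neq (10,3)$ — but $n = 10$ forces $k = 3$, which is excluded, and $n = 12$ forces $k \in \{3, 5\}$ with $k < 5$, so $k = 3$; one then checks $C_{12}(1,3)$ directly, exhibiting a concrete $3$-coloring (e.g., via a short search or by writing out a specific color sequence of length $12$) and verifying by hand that no nontrivial automorphism — a dihedral symmetry of order $24$, by Corollary~\ref{cor: dihedral group}, since $3^2 = 9 \not\equiv \pm 1 \pmod{12}$ and $3 \neq n/2 - 1$ — preserves it. I expect this finite check, plus carefully nailing down the parity bookkeeping that makes the tail string a non-palindrome in the general case, to be the fiddly part; the conceptual content is entirely carried by Theorem~\ref{thm: non-palindrome} and Lemma~\ref{lem: there is a b}.
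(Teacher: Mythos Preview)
Your approach is correct and is essentially the paper's: start from the bipartite $2$-coloring, invoke Lemma~\ref{lem: there is a b} to find a suitable $b$, and apply Theorem~\ref{thm: non-palindrome}. However, you work harder than necessary in two places. First, since $n$ is even and $\gcd(b,n)=1$, any valid $b$ is \emph{automatically} odd---there is no need to appeal to the prime construction inside the proof of Lemma~\ref{lem: there is a b} to force this. With $b$ odd, the shorter string $c(v_1),\dots,c(v_{b-1})$ has $c(v_1)\neq c(v_{b-1})$ (the indices $1$ and $b-1$ have opposite parity), so it is immediately a non-palindrome; the detour through the longer string and the abandoned idea of recoloring $v_{b+2}$ are unnecessary. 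Second, for $(n,k)=(12,3)$ the paper does not do a separate brute-force check: it simply takes $b=5$ (the unique choice with $1<b<6$, $b\neq 3$, $\gcd(b,12)=1$) and the same non-palindrome argument goes through verbatim.
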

\begin{proof}
Note that $C_n(1,k)$ is bipartite, though a proper 2-coloring of $C_n(1,k)$ admits a nontrivial color-preserving rotation, so $\chi_D(C_n(1,k)) \geq 3$. To prove the corresponding upper bound, observe first that no value of $k$ satisfies the hypotheses for any even $n$ less than $10$. When $n \in \{10,12\}$, only $k=3$ satisfies the given inequalities, though we are given that $(n,k) \neq (10,3)$. 

Assume that either $(n,k)=(12,3)$ or $n \geq 13$. Using $b=5$ in the first case and Lemma~\ref{lem: there is a b} in the latter, there is an integer $b$ such that $1<b<n/2$ and $b \neq k$ and $\gcd(b,n) = 1$. Let $c$ be a proper 2-coloring of $C_n(1,k)$; here the vertices $v_i$ with even subscripts recieve one color, and the vertices with odd subscripts receive the other color. Since $n$ is even, $b$ must be odd, and hence $c(v_1),\cdots,c(v_{b-1})$ is not a palindrome, since $c(v_1) \neq c(v_{b-1})$. By Theorem~\ref{thm: non-palindrome}, there is an optimal distinguishing coloring of $C_n(1,k)$ using 3 colors.
\end{proof}

Given the exceptionality of the case $(n,k)=(10,3)$ in Theorems~\ref{thm: E_1 and E_k permutations} and~\ref{thm: n even k odd}, we determine $\chi_D(C_{10}(1,3))$ next. Here the distinguishing chromatic number is quite a bit higher than the chromatic number.

\begin{prop}\label{thm: n=10, k=3}
$\chi_D(C_{10}(1,3)) = 5$.
\end{prop}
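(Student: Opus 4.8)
The plan is to first recognize $C_{10}(1,3)$ as a highly symmetric graph. Its edges join indices differing by $1$ or $3$, both odd, so it is bipartite with parts $A=\{v_0,v_2,v_4,v_6,v_8\}$ and $B=\{v_1,v_3,v_5,v_7,v_9\}$. Computing the non-adjacencies with one endpoint in each part, one finds exactly the pairs $\{v_0,v_5\},\{v_2,v_7\},\{v_4,v_9\},\{v_6,v_1\},\{v_8,v_3\}$, which form a perfect matching. Hence $C_{10}(1,3)\cong K_{5,5}-M$, the crown graph; relabel its vertices $a_0,\dots,a_4\in A$ and $b_0,\dots,b_4\in B$ so that $a_i\not\sim b_i$ and $a_i\sim b_j$ whenever $i\ne j$. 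Since a connected bipartite graph has a unique bipartition, a short direct argument (any part-preserving automorphism $(\sigma,\tau)\in S_A\times S_B$ must have $\sigma=\tau$, and there is the part-swap $a_i\leftrightarrow b_i$) shows $\Aut(C_{10}(1,3))$ consists of the diagonal maps $a_i\mapsto a_{\sigma(i)},\,b_i\mapsto b_{\sigma(i)}$ for $\sigma\in S_5$ together with their compositions with the part-swap; thus $\Aut(C_{10}(1,3))\cong S_5\times\mathbb{Z}_2$.

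Next I would pin down the structure of an arbitrary proper coloring $c$. The key local fact is that $a_i\sim b_j$ for $i\ne j$, so a color appearing on some $a_i$ can appear in $B$ only on $b_i$, and if $c(a_i)=c(b_i)$ then that common color can appear on no other vertex. Consequently there is a set $S\subseteq\{0,\dots,4\}$ such that for $i\in S$ the pair $\{a_i,b_i\}$ carries a ``private'' color $\gamma_i$ (distinct $\gamma_i$ for distinct $i\in S$) used nowhere else, while on $S'=\{0,\dots,4\}\setminus S$ the vertices $a_i$ receive colors from a palette $C_A$ disjoint from every color used on $B$, and the $b_i$ receive colors from a palette $C_B$ disjoint from $C_A\cup\{\gamma_i:i\in S\}$. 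The number of colors used is therefore $|S|+|C_A|+|C_B|$ with these three color sets disjoint, and $|C_A|,|C_B|\ge 1$ whenever $S'\ne\emptyset$.

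Then I would translate ``distinguishing'' into this language. A color-preserving diagonal automorphism $\sigma$ must fix $S$ pointwise (each color class $\gamma_i=\{a_i,b_i\}$ is preserved) and must preserve the fibers of $i\mapsto(c(a_i),c(b_i))$ on $S'$; a color-preserving swap-type automorphism forces $S'=\emptyset$, since for $i\in S'$ it would require a color of $C_A$ to occur in $B$. Hence $c$ is distinguishing if and only if $S'\ne\emptyset$ and $i\mapsto(c(a_i),c(b_i))$ is injective on $S'$, the latter giving $5-|S|=|S'|\le|C_A|\cdot|C_B|$. It remains to minimize $|S|+|C_A|+|C_B|$ subject to $|C_A|,|C_B|\ge1$ and $5-|S|\le|C_A||C_B|$: checking $|S|=0,1,2,3,4$ shows the minimum is $5$, attained for instance at $|S|=1,\ |C_A|=|C_B|=2$. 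This gives $\chi_D(C_{10}(1,3))\ge 5$; and translating the corresponding coloring back to $v$-labels — e.g. the coloring assigning colors $5,3,1,4,1,5,2,3,2,4$ to $v_0,\dots,v_9$, which one checks is proper and satisfies the two conditions — gives $\chi_D(C_{10}(1,3))\le 5$.

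The main obstacle is getting the structural description of proper colorings and the characterization of distinguishing ones exactly right: one must carefully account for both automorphism types (diagonal and swap) and verify that the constraints $|C_A|,|C_B|\ge1$ and $5-|S|\le|C_A||C_B|$ genuinely exclude every coloring with at most $4$ colors. Once the crown-graph identification is in place, the remaining steps are a finite, mechanical verification.
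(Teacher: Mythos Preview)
Your proof is correct and takes a genuinely different route from the paper's. Both arguments begin by recognizing $C_{10}(1,3)$ as $K_{5,5}$ minus a perfect matching (the crown graph), but from there they diverge. The paper argues the lower bound via a direct pigeonhole: it observes only that whenever two vertices $v_i,v_j$ in one part share a color and their partners $v_{i+5},v_{j+5}$ also share a color, the involution $(v_i\,v_j)(v_{i+5}\,v_{j+5})$ preserves the coloring; it then assumes $\ell\le 4$, forces a color to appear three times in one part, deduces that the three partners carry three distinct colors, and quickly reaches a contradiction. You instead compute the full automorphism group $S_5\times\mathbb{Z}_2$, classify all proper colorings by the triple $(S,C_A,C_B)$, characterize distinguishing colorings precisely as those with $S'\ne\emptyset$ and $i\mapsto(c(a_i),c(b_i))$ injective on $S'$, and then solve the resulting small optimization problem. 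Your approach is more structural and would generalize cleanly to arbitrary crown graphs $K_{n,n}-M$; the paper's approach is shorter and more ad hoc, using only a small fragment of the automorphism group but getting to the answer with less setup. Your explicit 5-coloring is different from the paper's but verifies correctly.
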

\begin{proof}
The graph $C_{10}(1,3)$ is bipartite and may be obtained by deleting the edges $v_i v_{i+5}$ from the complete biparite graph having partite sets $A=\{v_0,v_2,v_4,v_6,v_8\}$ and $B=\{v_1,v_3,v_5,v_7,v_9\}$. Note that if some proper coloring of the vertices assigns the same color to both $v_i,v_j$ in $A$ and the same color (which must different from the first) to $v_{i+5},v_{j+5}$ in $B$, then the involution of $V(C_{10}(1,3))$ written in cycle notation as $(v_i v_j)(v_{i+5} v_{j+5})$ is a color-preserving automorphism of the graph, so such a coloring is not distinguishing.

Let $c:V(C_{10}(1,3)) \to \{1,\dots,\ell\}$ be a distinguishing proper coloring, and suppose by way of contradiction that $\ell < 5$. By a pigeonhole principle, some color must appear on at least three vertices of the graph. Since $c$ is a proper coloring, these three vertices must appear in the same partite set; assume that it is $A$. By the symmetries in $C_{10}(1,3)$, we may assume that these vertices are $v_0,v_2,v_4$, and the color assigned is 1. As explained above, the colors on $v_5,v_7,v_9$ must then be distinct elements of $\{2,\dots,\ell\}$, which forces $\ell = 4$. Since $c$ is a proper coloring, $v_6$ and $v_8$ must also receive color 1, but then when we consider $v_1,v_3$, we see that the pigeonhole principle forces some color from $\{2,\dots,\ell\}$ to appear at least twice on vertices in $B$, and as above we find a color-preserving involution of the vertices of $C_{10}(1,3)$, a contradiction. Thus a distinguishing proper coloring of $C_{10}(1,3)$ requires at least 5 colors, and one can verify that the following map $c:\{v_0,\dots,v_9\} \to \{1,2,3,4,5\}$ provides one.

\begin{center}
\begin{tabular}{rccccc}
$A$: & $c(v_0) = 1$, & $c(v_2) = 2$, & $c(v_4) = 2$, & $c(v_6) = 3$, & $c(v_8) = 3$, \\
$B$: & $c(v_5) = 1$, & $c(v_7) = 4$, & $c(v_9) = 5$, & $c(v_1) = 4$, & $c(v_3) = 5$.
\end{tabular}
\end{center}
\end{proof}

\section{Dihedral symmetries} \label{sec: dihedral symmetries}
In this section we restrict our attention to graphs $C_n(1,k)$ for which $\Aut(C_n(1,k))$ is the dihedral group $D_n$. We will show that often the general bound in the conclusion of Theorem~\ref{thm: non-palindrome} is not optimal, since we may find a distinguishing proper coloring using $\chi(C_n(1,k))$ colors.

When $\Aut(C_n(1,k)) \cong D_n$, every automorphism of $C_n(1,k)$ permutes the edges in $E_1$, to use the notation from Section~\ref{sec: automorphism groups}, and likewise permutes the edges of $E_k$. We may also use more intuitive language, imagining that $C_n(1,k)$ is drawn with its vertices placed, in order of their subscripts, at the vertices of a regular $n$-gon, with the edges of $E_1$ drawn as the sides and the edges of $E_k$ drawn as diagonals of this polygon. This allows us to freely speak of the elements of $\Aut(C_n(1,k)$ as rotations and reflections and to determine the forms of colorings that are preserved under these automorphisms. We do this in Section~\ref{subsec: colorings preserved} below before proceeding in later subsections by the values or parities of $n$ and $k$ (recalling that the case where $n$ is even and $k$ is odd was concluded in Section~\ref{sec: chi plus 1}).

\subsection{Colorings preserved by rotations and reflections} \label{subsec: colorings preserved}
We consider first reflections.

\begin{lem} \label{lem: no reflections}
If a reflection symmetry in $\Aut(C_n(1,k))$ is color-preserving for a given proper coloring of $C_n(1,k)$, then $n$ is even and $k$ is odd.
\end{lem}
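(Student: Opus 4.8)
The plan is to argue by contradiction: suppose a reflection $\sigma \in \Aut(C_n(1,k))$ preserves a proper coloring $c$, and derive that $n$ must be even and $k$ odd. Since $\Aut(C_n(1,k))$ acts as the dihedral symmetries on the $n$-gon whose sides are $E_1$ and whose diagonals are $E_k$, a reflection $\sigma$ either fixes two antipodal vertices (when $n$ is even), fixes one vertex and swaps the two ``opposite'' edges of $E_1$ (a ``vertex'' reflection), or fixes no vertex but swaps two antipodal edges of $E_1$ (an ``edge'' reflection, possible only when $n$ is even). I would handle these cases by examining the vertices near the axis of reflection, where $\sigma$ forces two adjacent vertices to share a color unless certain parity conditions hold.

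The key computation is local. Write the reflection in coordinates as $\sigma(v_i) = v_{a-i}$ for an appropriate fixed $a$ (taken mod $n$). First I would observe that if $a$ is such that $v_i$ and $v_{a-i}$ are adjacent for some $i$ other than by being forced equal, we get a monochromatic edge and a contradiction. The pair $\{v_i, v_{a-i}\}$ differs in index by $a - 2i$; as $i$ ranges over $\{0,\dots,n-1\}$, $a - 2i$ ranges over all residues of a fixed parity (all residues if $n$ is odd). When $n$ is odd, there is some $i$ with $a - 2i \equiv 1 \pmod n$, so $v_i$ and $v_{a-i}$ are adjacent and receive the same color under $\sigma$ — a proper-coloring contradiction. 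Hence $n$ must be even. When $n$ is even, I would similarly look for $i$ with $a - 2i \equiv k \pmod n$: such an $i$ exists precisely when $a$ and $k$ have the same parity, and then $v_i v_{a-i} \in E_k$ is monochromatic, another contradiction. Combining with the constraint on $a$ coming from whether $\sigma$ is a vertex- or edge-reflection (which fixes the parity of $a$), I conclude $k$ must be odd.

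More carefully: for a vertex-reflection we may take $a$ even (the fixed vertex is $v_{a/2}$, and then also $v_{a/2 + n/2}$ is fixed or swapped depending on $n/2$'s parity — but here I only need that $a$ is even), while for an edge-reflection $a$ is odd. In the vertex-reflection case, $a$ even forces $k$ odd to avoid an $E_k$-monochromatic edge; this is the desired conclusion. In the edge-reflection case, $a$ odd means there is $i$ with $a - 2i \equiv 1$, giving a monochromatic $E_1$-edge regardless — so edge-reflections can never preserve a proper coloring, and we are left only with the vertex-reflection case, which yields $n$ even and $k$ odd. I would also double-check the degenerate overlap when $k = n/2$, but that is excluded here since we are in the regime $1 < k < n/2$.

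The main obstacle I anticipate is bookkeeping the reflection types cleanly: making sure the parity of the ``axis offset'' $a$ is correctly pinned down for each of the two kinds of reflections of an $n$-gon, and handling the fact that when $n$ is even there are two conjugacy classes of reflections. Once that is set up, each case reduces to the elementary observation that the set $\{a - 2i \bmod n : i\}$ meets $\{1\}$ or $\{k\}$ unless a parity obstruction intervenes, so the argument is short. I would present it by first disposing of odd $n$, then of edge-reflections, and finally extracting ``$k$ odd'' from the surviving vertex-reflection case.
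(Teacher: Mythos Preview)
Your proposal is correct and takes essentially the same approach as the paper: both arguments locate an edge of $E_1$ or $E_k$ whose endpoints are swapped by the reflection, producing a monochromatic edge unless $n$ is even and $k$ is odd. The paper phrases this geometrically (the reflection axis passes through the midpoint of a $1$-edge, or else through a fixed vertex $v_a$ with $v_{a-k/2},v_{a+k/2}$ swapped when $k$ is even), while you parametrize algebraically via $\sigma(v_i)=v_{a-i}$ and solve the congruences $a-2i\equiv 1$ or $a-2i\equiv k$; the content is identical.
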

\begin{proof}
Picture a drawing of $C_n(1,k)$ as a regular polygon with chords, with the polygon vertices drawn a circle. Every reflection symmetry in $\Aut(C_n(1,k))$ has a corresponding axis of reflection that passes through the center of the circle.
If the axis of reflection passes through the midpoint of a 1-edge, then endpoints of that edge have different colors (since $C_n(1,k)$ is properly colored), and the reflection is not color-preserving. Hence the only possible color-preserving reflection symmetry is one where $n$ is even and the symmetry fixes two ``opposite'' vertices $v_a$ and $v_{a+n/2}$. Here $k$ must be odd, since otherwise the vertices $v_{a-k/2}$ and $v_{a+k/2}$ would have the same color (by the symmetry) but be adjacent, a contradiction.  
\end{proof}

Since the case when $n$ is even and $k$ is odd was handled in Section~\ref{sec: chi plus 1}, we note that for the rest of Section~\ref{sec: dihedral symmetries}, we may ignore reflections when checking for color-preserving symmetries.

The next result deals with rotations in $\Aut(C_n(1,k))$.

\begin{lem} \label{lem: rotations}
If a rotation symmetry in $\Aut(C_n(1,k))$ is color-preserving for a given proper coloring $c$ of $C_n(1,k)$, then the sequence $c(v_0),\dots,v(v_{n-1})$ is periodic with a period that is a proper divisor of $n$.
\end{lem}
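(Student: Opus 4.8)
The plan is to translate a color-preserving rotation into an explicit arithmetic statement about the index sequence and then observe that the resulting periodicity must be by a proper divisor of $n$. First I would set up notation: a rotation symmetry in $\Aut(C_n(1,k))$, when the graph is drawn with $v_0,\dots,v_{n-1}$ at the vertices of a regular $n$-gon, has the form $v_i \mapsto v_{i+r}$ for all $i$ (indices mod $n$), for some fixed $r \in \{1,\dots,n-1\}$; this uses the identification of the rotations in $D_n$ with the cyclic shifts, which is exactly the picture justified at the start of Section~\ref{sec: dihedral symmetries}. Here I must be slightly careful, since the ambient group in case (1) of Theorem~\ref{thm: PW automorphisms} is larger than $D_n$; however, Lemma~\ref{lem: no reflections} and the surrounding discussion have already reduced us to the situation where the relevant automorphisms act on the polygon, and in any case a \emph{rotation} symmetry by definition cyclically shifts the vertices along the Hamilton cycle $v_0 v_1 \cdots v_{n-1} v_0$, so it is $v_i \mapsto v_{i+r}$.

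Next I would write down what ``color-preserving'' means for this map: $c(v_{i+r}) = c(v_i)$ for all $i \in \{0,\dots,n-1\}$, with subscripts read mod $n$. This says precisely that the sequence $c(v_0), c(v_1), \dots, c(v_{n-1})$, regarded cyclically, is invariant under a shift by $r$. Iterating, $c(v_{i+tr}) = c(v_i)$ for every integer $t$, so the sequence is invariant under a shift by $d := \gcd(r,n)$ as well (since $d$ is an integer combination of $r$ and $n$, and shifting by $n$ is trivial). Thus the sequence is periodic with period $d$, and $d \mid n$. The only remaining point is that $d$ is a \emph{proper} divisor of $n$, i.e.\ $d < n$: this is where I use that the rotation is nontrivial, so $r \not\equiv 0 \pmod n$ and hence $d = \gcd(r,n) \le r < n$. (If one prefers, a nontrivial rotation by definition is not the identity, and a rotation with $d = n$ would force $n \mid r$, i.e.\ the identity.)

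I do not expect a genuine obstacle here — the statement is essentially a restatement of what a color-preserving cyclic shift means, combined with the elementary fact that the subgroup of $\mathbb{Z}/n\mathbb{Z}$ generated by $r$ is generated by $\gcd(r,n)$. The one place that deserves a careful sentence rather than a wave of the hand is the claim that the relevant automorphism really is a cyclic shift $v_i \mapsto v_{i+r}$; I would phrase this by invoking the polygon interpretation set up at the beginning of Section~\ref{sec: dihedral symmetries}, under which the rotations in the symmetry group are exactly the maps $v_i \mapsto v_{i+r}$, so that ``rotation symmetry'' carries its literal meaning. Everything after that is a two-line number-theoretic argument about shifts of a cyclic sequence, and I would present it at that level of detail.
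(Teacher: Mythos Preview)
Your proposal is correct and follows essentially the same argument as the paper: write the rotation as $v_i \mapsto v_{i+r}$, observe that color-preservation gives $c(v_i)=c(v_{i+tr})$ for all $t$, pass to $d=\gcd(r,n)$ via the elementary fact that $d$ lies in the subgroup generated by $r$ modulo $n$, and note $d<n$ because the rotation is nontrivial. The only difference is cosmetic; your discussion justifying that a ``rotation symmetry'' really is a cyclic shift is a bit more explicit than the paper's, but the mathematical content is identical.
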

\begin{proof}
Suppose that $c:V(G) \to \{1,\dots,\chi(C_n(1,k))\}$ is a proper coloring, and that $\rho$ is a a non-identity rotation in $\Aut(C_n(1,k))$ that preserves the coloring $c$. If $\rho(v_0) = v_r$, then clearly $c(v_i) = c(v_{i+tr})$ for all $t$. This shows that $c(v_0),\dots,c(v_{n-1})$ is periodic.  

In fact, an elementary result from number theory implies that $c(v_0)$ appears on all vertices $v_j$ where $j$ is a multiple of the greatest common divisor of $r$ and $n$. Let $d=\gcd(n,r)$. Now by symmetry $c(v_i) = c(v_{i+td})$ for all $i$ and $t$, showing that the period of $c(v_0),\dots,c(v_{n-1})$ divides $d$ and hence $n$. Since $d<n$, the period is a proper divisor of $n$.
\end{proof}

In light of Lemma~\ref{lem: rotations}, for the rest of Section~\ref{sec: dihedral symmetries}, in verifying that a coloring of $C_n(1,k)$ is preserved by no non-identity symmetry, we need only check that the coloring is not preserved by any rotation $v_i \mapsto v_{i+d}$ where $d$ is a proper divisor of $n$. This allows us a quick result.

\begin{cor}
If $n$ is an odd prime and $\Aut(C_n(1,k)) \cong D_n$, then $\chi_D(C_n(1,k))=\chi(C_n(1,k))$.
\end{cor}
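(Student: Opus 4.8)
The plan is to combine the two structural reductions just established---Lemma~\ref{lem: no reflections} (no color-preserving reflection exists unless $n$ is even and $k$ is odd) and Lemma~\ref{lem: rotations} (any color-preserving rotation forces the color sequence to have period a proper divisor of $n$)---with the fact that $n$ is an odd prime. First I would note that, since $n$ is odd, Lemma~\ref{lem: no reflections} immediately rules out any color-preserving reflection. Then, since $n$ is prime, its only proper divisor is $1$; so by Lemma~\ref{lem: rotations} the only rotation that could preserve a proper coloring is one forcing the sequence $c(v_0),\dots,c(v_{n-1})$ to have period $1$, i.e.\ a constant coloring. But a constant coloring is not proper (the graph has edges), so no non-identity rotation can preserve a proper coloring either.

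Putting these together: given \emph{any} proper coloring of $C_n(1,k)$ with $\chi(C_n(1,k))$ colors (one exists by definition of the chromatic number), no element of $\Aut(C_n(1,k)) \cong D_n$ other than the identity preserves it---neither a reflection (ruled out by parity) nor a non-identity rotation (ruled out by primality). Hence that coloring is already a distinguishing coloring, giving $\chi_D(C_n(1,k)) \le \chi(C_n(1,k))$. The reverse inequality $\chi_D(G) \ge \chi(G)$ holds for every graph $G$ (as noted in the introduction, following Collins and Trenk), so equality follows.

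I do not anticipate a genuine obstacle here; the corollary is a direct consequence of the two preceding lemmas together with the elementary observations that an odd prime is odd and has no nontrivial divisors, and that a proper coloring of a graph with at least one edge is nonconstant. The only point requiring the tiniest care is making explicit that a ``constant'' color sequence (period dividing $1$) cannot arise from a proper coloring of $C_n(1,k)$, which has edges for every $n \ge 3$; once that is said, the argument is complete.
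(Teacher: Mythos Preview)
Your proposal is correct and follows exactly the same approach as the paper's proof, which simply invokes Lemmas~\ref{lem: no reflections} and~\ref{lem: rotations} in a single sentence to conclude that every proper coloring is preserved only by the identity. Your version just spells out the details (oddness rules out reflections; primality forces any rotation-preserved coloring to be constant, hence improper) that the paper leaves implicit.
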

\begin{proof}
By Lemmas~\ref{lem: no reflections} and~\ref{lem: rotations}, any proper coloring of $C_n(1,k)$ is preserved only by the identity in $\Aut(C_n(1,k))$.
\end{proof}

\subsection{Case: $k=2$ or $k=(n-1)/2$}
Theorem~\ref{thm: chromatic num} shows that $\chi(C_n(1,2))$ and $\chi(C_n(1,(n-1)/2)$ are $4$ except when $n=5$ or when $n$ is a multiple of 3 (in the latter case, the chromatic number is $3$). We are able to give optimal distinguishing proper colorings of these graphs. Note first of all that by Corollary~\ref{cor: iso}, $C_n(1,(n-1)/2)$ is isomorphic to $C_n(1,2)$, so it suffices to restrict our attention to $C_n(1,2)$. We may also assume that $n \geq 7$, since distinguishing colorings have already been described in earlier sections for the cases $3 \leq n \leq 6$. 

\begin{thm}\label{thm: k= 2}
For all $n \geq 7$, $\chi_D(C_n(1,2)) = 4$.
\end{thm}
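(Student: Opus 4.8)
The plan is to establish the lower bound $\chi_D(C_n(1,2)) \geq 4$ and then exhibit an explicit distinguishing proper $4$-coloring for every $n \geq 7$. For the lower bound, recall from Theorem~\ref{thm: chromatic num} that $\chi(C_n(1,2)) = 4$ whenever $n \geq 7$ and $3 \nmid n$, so in those cases the bound is immediate. When $3 \mid n$ (and $n \geq 9$), the chromatic number drops to $3$, so I would argue separately that no proper $3$-coloring can be distinguishing: in a $3$-coloring of $C_n(1,2)$ the vertices $v_i, v_{i+1}, v_{i+2}$ form a triangle and hence use all three colors, which forces the coloring to be the periodic pattern $c(v_i)$ depending only on $i \bmod 3$ (up to a permutation of the colors). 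Such a coloring is preserved by the rotation $v_i \mapsto v_{i+3}$, which is a nontrivial automorphism since $n > 3$; hence $\chi_D(C_n(1,2)) \geq 4$ in every case.

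For the upper bound I would build a concrete $4$-coloring and invoke the machinery of Section~\ref{sec: dihedral symmetries}. Since $k = 2$ is even, Corollary~\ref{cor: dihedral group} applies once we check $k^2 = 4 \not\equiv \pm 1 \pmod n$ for $n \geq 7$ (true except possibly for small $n$ like $n = 5$, which is excluded, and we should also double-check $n=3,5$ are not in range), so $\Aut(C_n(1,2)) \cong D_n$. By Lemma~\ref{lem: no reflections}, since $k=2$ is even, no reflection can preserve a proper coloring, so we need only rule out color-preserving rotations; by Lemma~\ref{lem: rotations}, it suffices to ensure the color sequence $c(v_0), \dots, c(v_{n-1})$ is not periodic with any period that is a proper divisor of $n$. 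The construction I would use is a ``mostly periodic'' pattern with a small aperiodic defect: take the repeating block $1,2,3,4$ along the vertices (which is automatically proper, since consecutive indices and indices differing by $2$ get distinct colors within such a block, provided the block length $4$ interacts correctly with the wrap-around), and then locally perturb a bounded-size window near $v_0$ to destroy all periodicity while keeping properness. The exact perturbation will depend on $n \bmod 4$; I would handle the four residue classes (and perhaps a few of the smallest values $n = 7, 8, 9, 10, 11$ by hand) to make sure the coloring both remains proper across the seam and contains a uniquely identifiable ``irregular'' pattern of colors in one spot.

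The main obstacle will be the upper-bound construction: the constraint that each vertex is adjacent to $v_{i\pm 1}$ and $v_{i \pm 2}$ means a proper $4$-coloring essentially behaves like a proper coloring of the square of a path/cycle, where every window of three consecutive vertices must receive three distinct colors. This rigidity makes it delicate to insert an aperiodic defect without creating a conflict, and one must verify the defect genuinely kills \emph{every} rotation by a proper divisor of $n$, not just the obvious ones. I would manage this by designing the coloring so that one specific color, say $4$, appears on a recognizable configuration — for instance, making $4$ appear exactly once in a stretch where it otherwise would appear periodically, or arranging that the unique position where the pattern $1,2,3,4$ is broken produces a color triple occurring nowhere else. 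Then any color-preserving rotation must fix that configuration, hence fix a vertex, hence (being a rotation) be the identity. Once the lower bound and a valid construction for each residue class are in place, the theorem follows.
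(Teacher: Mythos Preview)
Your lower bound argument and the overall framework for the upper bound (invoke Corollary~\ref{cor: dihedral group} to get $\Aut(C_n(1,2))\cong D_n$, eliminate reflections via Lemma~\ref{lem: no reflections} since $k=2$ is even, then kill rotations) are correct and coincide exactly with the paper's approach.

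Where you diverge is in the explicit construction, and here your proposal is both more complicated than necessary and left unfinished. You propose a base pattern $1,2,3,4$ repeated around the cycle, perturbed near $v_0$, with cases according to $n\bmod 4$; you then concede this is the ``main obstacle'' and do not supply the perturbations. The paper's construction avoids this difficulty entirely by exploiting the fact that $C_n(1,2)$ is \emph{almost} $3$-colorable: it uses colors $1,2,3$ greedily on nearly all vertices and reserves color $4$ for just one vertex (when $n\equiv 0$ or $1\pmod 3$) or two vertices (when $n\equiv 2\pmod 3$, placed at $v_0$ and $v_{n-3}$). Because color $4$ then appears in only one or two positions, any color-preserving rotation must fix those positions, and the identity is the only rotation doing so. This makes the distinguishing verification a one-line observation rather than a case analysis of seam defects, and the case split is by $n\bmod 3$ (tracking the greedy $3$-coloring) rather than $n\bmod 4$. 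Your approach could in principle be completed, but the paper's ``rare marker color'' idea is the cleaner route and is what you are missing.
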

\begin{proof}
By Theorem~\ref{thm: chromatic num}, $\chi_D(C_n(1,2)) \geq \chi(C_n(1,2)) = 4$ if $n$ is not a multiple of 3. If $n$ is a multiple of 3, then the only partition of the vertices of $C_n(1,2)$ into three independent sets is given by grouping the vertices $v_i$ by the congruence class modulo 3 of their subscripts; hence any proper 3-coloring is preserved by the rotation given by $v_i \mapsto v_{i+3}$, and as before we must have $\chi_D(C_n(1,2)) \geq 4$.

By Corollary~\ref{cor: dihedral group}, $\Aut(C_n(1,2))$ is isomorphic to the dihedral group of order $2n$. By Lemma~\ref{lem: no reflections}, a proper coloring of $C_n(1,2)$ will be distinguishing if and only if no color-preserving rotation symmetry exists other than the identity. If $n$ is congruent to 0 or 1 modulo 3, we obtain a proper coloring by assigning $v_0$ the color 4 and greedily coloring $v_1,v_2,\dots,v_{n-1}$ in order with the lowest available color from $\{1,2,3\}$. If $n \equiv 2 \pmod{3}$, we color $C_n(1,2)$ by assigning color 4 to vertices $v_0$ and $v_{n-3}$ and greedily coloring the remaining vertices in order of their subscripts with colors from $\{1,2,3\}$ as before. The placement of color 4 allows for no nontrivial rotational symmetry, so these colorings establish that $\chi_D(C_n(1,2))=4$.
\end{proof}


\subsection{Case: $n$ is even and $k$ is even.}

Before presenting our result when $n$ is even and $k$ is even, we establish some conventions that will also be used in later sections. Taking $n$ and $k$ to be fixed, we first define $q$ and $r$ to be the unique integers such that $n=qk+r$, where $0 \leq r < k$. 

Our distinguishing colorings will often be constructed with \emph{blocks} of colors, that is, sequences of colors to be assigned to vertices $v_i$ with consecutive indices. We may also use \emph{block} to refer to the vertices being assigned that sequence of colors. For example, to color the vertices of $C_n(1,k)$ with the block $B = (1,2,3)$ means to alternately color consecutive vertices with 1, 2, and 3, and we may also refer to subsets of three consecutive vertices colored 1, 2, 3 (in that order) as blocks. Our next result gives a more sophisticated example of coloring with blocks.

\begin{thm} \label{thm: n even, k even}
Given integers $k, n$ such that  $2 <  k < n/2 -1,$ $n$ is even and $k$ is even and $\Aut(C_n(1,k)) \cong D_n$, then $\chi_D(C_{n}(1,k))=3.$
\end{thm}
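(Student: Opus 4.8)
The plan is to exhibit an explicit proper $3$-coloring of $C_n(1,k)$ whose color sequence $c(v_0),\dots,c(v_{n-1})$ is aperiodic in a strong enough sense that no nontrivial rotation preserves it, and then invoke Lemmas~\ref{lem: no reflections} and~\ref{lem: rotations} to conclude that the coloring is distinguishing. Since $\Aut(C_n(1,k))\cong D_n$ by hypothesis, Lemma~\ref{lem: no reflections} already rules out color-preserving reflections (as $n$ is even but $k$ is even, not odd), and Lemma~\ref{lem: rotations} reduces the remaining task to checking that no rotation $v_i\mapsto v_{i+d}$ with $d$ a proper divisor of $n$ preserves the coloring. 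Because $C_n(1,k)$ contains the Hamiltonian cycle $v_0 v_1\cdots v_{n-1}v_0$ and also the cycle structure coming from the $k$-edges, I first need $3$ colors to be enough for a \emph{proper} coloring at all: here is where I would use the hypothesis that $n$ is even and $k$ is even together with $\Aut\cong D_n$, which by Theorem~\ref{thm: chromatic num} forces $\chi(C_n(1,k))=3$ (the value $4$ cases there are $k=2$, $k=(n-1)/2$, or $(n,k)=(13,5)$, all excluded by our hypotheses, and $\chi=2$ requires $k$ odd).

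The construction proceeds with \emph{blocks}, using the notation $n=qk+r$, $0\le r<k$, set up just before the statement. A natural periodic proper coloring of $C_n(1,k)$ when $k$ is even is to repeat a block of the form $(1,2,1,2,\dots,1,2)$ of length $k$ followed by another such block shifted — more carefully, since both $v_i v_{i+1}$ and $v_i v_{i+k}$ must be bichromatic and $k$ is even, the pattern that is ``$1,2$ alternating along the Hamiltonian cycle, but flipped after every $k$ steps'' is proper: within a block of $k$ consecutive vertices the $1,2$-alternation handles the $1$-edges, and flipping parity at each block boundary makes the $k$-edges bichromatic (an edge $v_iv_{i+k}$ straddles exactly one boundary when $r\neq 0$, or its endpoints lie in corresponding positions of consecutive blocks with opposite parity). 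I then perturb this $\{1,2\}$-pattern at one or two controlled spots by introducing the color $3$ — for instance recoloring $v_0$, and if necessary one more nearby vertex — so that color $3$ appears on a set of vertices that is not invariant under any nontrivial rotation of $\mathbb{Z}_n$. Concretely, making color $3$ appear exactly once, or exactly twice on two vertices whose index-difference does not divide $n$ evenly into equal arcs, immediately kills every rotation by a proper divisor of $n$; then Lemma~\ref{lem: rotations} finishes the argument.

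The main obstacle — and the part that will require the most care — is simultaneously (a) keeping the coloring proper after inserting color $3$, which is delicate precisely at the block boundaries and near the index-$0$ wraparound where the alternating parity pattern may itself only ``close up'' correctly depending on the parity of $q$ and whether $r=0$; and (b) handling the genuinely different subcases that $r=0$ versus $r\neq 0$, and $q$ even versus $q$ odd, since the baseline alternating pattern is globally consistent on all of $\mathbb{Z}_n$ only in some of these cases and otherwise needs a ``seam'' block that is itself slightly irregular. I would organize the proof by these subcases, in each one writing the block pattern explicitly (as in the tables in the proof of Theorem~\ref{thm: Mobius}), verifying properness by the standard observation that each vertex is adjacent only to its $\pm1$- and $\pm k$-neighbors and checking those four colors differ from its own, and then pointing to the unique (or asymmetrically placed) occurrences of color $3$ to rule out all rotations by proper divisors of $n$. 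The lower bound $\chi_D\ge 3$ is immediate since $\chi(C_n(1,k))=3$, so only the upper bound needs this work, and no further symmetry analysis is needed beyond Lemmas~\ref{lem: no reflections} and~\ref{lem: rotations}.
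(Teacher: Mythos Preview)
Your overall framework (use Lemmas~\ref{lem: no reflections} and~\ref{lem: rotations} to reduce to ruling out rotations, and build the coloring from length-$k$ blocks) matches the paper's. The gap is in your baseline coloring. You assert that the pattern ``$1,2$ alternating along the Hamiltonian cycle, flipped after every $k$ steps'' is proper, but it is not: if block $j$ ends $\dots,1,2$ and block $j+1$ begins $2,1,\dots$, then the $1$-edge crossing that boundary is monochromatic. Since $k$ is even, there is simply no way to $2$-color so that both the $1$-edges and the $k$-edges are all bichromatic; indeed the $(k+1)$-cycles $v_i v_{i+1}\cdots v_{i+k} v_i$ are odd, and there are $n$ of them spread around the graph, so any $2$-coloring has on the order of $n/k\ (\ge q)$ monochromatic edges, one near each block seam. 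Consequently ``introducing the color $3$ at one or two controlled spots'' cannot repair properness when $q\ge 3$, and your later symmetry-breaking argument never gets off the ground.

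What the paper does instead is to abandon the idea of a near-$2$-coloring altogether. For $q\ge 3$ it uses three length-$k$ blocks $B_1=(1,2,3,2,3,\dots,2,3,1)$, $B_2=(2,3,1,3,1,\dots,3,1,2)$, $B_3=(3,1,2,1,2,\dots,1,2,3)$, each of which already uses all three colors; alternating $B_1,B_2$ and finishing with $B_3$ and a truncated tail handles both the $1$-edges across block boundaries and the $k$-edges simultaneously. For $q=2$ it uses three \emph{different} two-color blocks $(1,2,\dots),(3,1,\dots),(2,3,\dots)$ so that at every seam the two colors in play change. In both cases the paper then locates a vertex (or short string of vertices) whose colored neighborhood is unique, which kills all nontrivial rotations. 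If you want to salvage your approach, you would need at minimum one insertion of color $3$ \emph{at every block boundary}, and then a separate argument that the resulting pattern is aperiodic; at that point you are essentially reconstructing the paper's block design.
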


\begin{proof}

We give a proper 3-coloring of the vertices of $C_n(1,k)$ as follows. Consider the following blocks of $k$ terms, where each color is drawn from $\{1,2,3\}$. Here the bounds on $k$ and the assumption that $k$ is even ensure a consistent definition. For convenience hereafter we represent blocks (and later, portions of blocks) by enclosing them in rectangles.

\medskip

\hfil \begin{tabular}{ccccccccccc}
    $B_1$ & $=$ & ($1$, & $2$, & $3$, & $2$, & $3$, & $\cdots$, & $2$, & $3$, & $1$); \\
    $B_2$ & $=$ & ($2$, & $3$, & $1$, & $3$, & $1$, & $\cdots$, & $3$, & $1$, & $2$); \\
    $B_3$ & $=$ & ($3$, & $1$, & $2$, & $1$, & $2$, & $\cdots$, & $1$, & $2$, & $3$).
\end{tabular} \hfil

\medskip

\textsc{Case 1: $n\geq 3k$ so $q\geq 3$}

\medskip 

 Assign colors to $v_1,\dots,v_{(q-1)k}$ by alternating the use of blocks $B_1$ and $B_2$ on successive collections of $k$ consecutively-indexed vertices. Assign colors to $v_{(q-1)k+1},\dots,v_{qk}$ using the block $B_3$. For any remaining $r$ vertices $v_{qk+1},\dots,v_{n-1}, v_0$, begin by assigning color 2 to $v_{qk+1}$. Assign to $v_0$ the color $3$ if $r=2$ and the color $2$ otherwise. Then color any remaining vertices $v_{qk+2},\dots,v_{n-1}$ by alternating the colors $3$ and $1$. The final $r$ vertices' colors thus create a block $B'$ that is a shortened or partial version of the block $B_2$.

We illustrate this coloring in the the figure below, where each row indicates the colors placed on the sets of $k$ consecutively-indexed vertices in $C_n(1,k)$, beginning in the first row with the colors on $v_1,\dots, v_k$, followed in the second row with the colors on $v_{k+1},\dots,v_{2k}$, and so on. In this way the entries surrounding a vertex's color show the colors on neighboring vertices along $1$-edges (these are the immediately following and preceding numbers) and along $k$-edges (these are the vertically aligned numbers in the previous and following rows; for convenience, the initial block $B_1$ is repeated at the end of the figure). In contrast to the collection of rectangles above, though the blocks $B_1,B_2,B_3$ occupy entire rows, here they are shown as split into two rectangles each, respectively containing $r$ and $k-r$ entries. This allows us to see how the colors from the block $B'$ (which is shaded) are aligned with colors from the preceding block $B_3$ and the following block $B_1$.

\[
\begin{array}{c}
\hspace{2.7cm}\overbrace{\hspace{2.9 cm}}^{r \text{ (even)}}\hspace{.5 cm} \overbrace{\hspace{2.2 cm}}^{k-r \text{ (even)}} \\

\begin{array}{rcc}
B_1: & \framebox{1 2 3 2 3 $\cdots$ 2 3 2} & \framebox{3 2 $\cdots$ 2 3 1}\\

B_2: & \framebox{2 3 1 3 1 $\cdots$ 3 1 3} & \framebox{1 3 $\cdots$ 3 1 2}\\

B_1: & \framebox{1 2 3 2 3 $\cdots$ 2 3 2} & \framebox{3 2 $\cdots$ 2 3 1}\\

B_2: & \framebox{2 3 1 3 1 $\cdots$ 3 1 3} & \framebox{1 3 $\cdots$ 3 1 2}\\

\vdots \hspace{0.25cm} & \vdots & \vdots\\

B_3: & \framebox{3 1 2 1 2 $\cdots$ 1 2 1} & \framebox{2 1 $\cdots$ 1 2 3}\\

B', \text{ then } B_1: & \framebox{{\color{red}{2 3 1 3 1 $\cdots$ 3 1 2}}
} & 
{\framebox{1 2 $\cdots$ 2 3 2}}\\

B_1 \text{ concluded:} & \framebox{3 2 3 2 3 $\cdots$ 2 3 1}
&

\end{array}
\end{array}
\]

To see that this is a proper coloring, note that vertices with consecutive indices $i,i+1$ where $0 \leq i <n$ receive distinct colors by the patterns within the blocks and at their ends. We will also see that each vertex $v_i$ is colored differently than $v_{i-k}$ and $v_{i+k}$. This is apparent from the blocks displayed above if $v_i$ belongs to a block of vertices colored with one of $B_1,B_2,B_3$ and its neighbor $v_{i-k}$ or $v_{i+k}$ does as well, with $B'$ not appearing between the two blocks. To finish the argument, we assume that $r>0$ and consider the vertices $v_i$ for $i \in \{(q-1)k+1,\dots,n\}$, showing that none receives the same color as $v_{i+k}$. These vertices are assigned colors using the blocks $B_3$ and $B'$. In the figure above, these colors appear in the shaded rectangle and on the previous row.

Recalling that $B'$ is constructed as a shortened or truncated version of $B_2$, as we compare the first $r$ entries of $B_3$ with those of $B'$, we see that the colors on vertices $v_i,v_{i+k}$ must differ; this is apparent for colors at the beginning or middle of the blocks, and we use the fact that $r$ is even, so the final entry of $B'$ (which is 3 or 2) is sure to align with an entry of 1 in $B_3$. Likewise, as we compare the entries of $B'$ with the last $r$ entries of $B_1$, the first entry of $B'$ (which is 2) aligns with a 3 from $B_1$, since both $k$ and $r$ are even; similarly, no other color in $B'$ aligns vertically with the same color in $B_1$. Finally, comparing the final $k-r$ entries of $B_3$ with the first $k-r$ entries of $B_1$, having $k$ and $r$ be even ensures that the parities of the relevant entries in $B_3$ differ from the parities of the vertically aligned entries from $B_1$.

When $r>2,$ we shall show that if we switch $v_0$ color to 3, we obtain a proper distinguishing coloring. As presently constructed, when $r>2,$ $v_0$ is a vertex colored 2 having each of its neighbors colored 1. Thus, there is no trouble switching its color to 3. Let's go ahead and switch the color of $v_0$ to 3. This moves keeps the coloring proper while making $v_0$ the only vertex colored 3 having each of its neighbors colored 1. 
Therefore, any automorphism that preserves the coloring must fix that vertex. By Lemmma~\ref{lem: no reflections}. This leaves only the trivial automorphism, and the coloring is distinguished.

\bigskip

When $r=2,$ we will show that the vertices $v_{-2},v_{-1},v_{0}$ must be fixed by color-preserving symmetries. Hence, the coloring will be distinguishing, since only the identity rotation preserves it. We depict the coloring in this case as we did before, with $k$ numbers in each row and the block $B'$ appearing as shaded.

\medskip
\[
\begin{array}{c}
\overbrace{\hspace{.6 cm}}^{r=2}\hspace{.3 cm} \overbrace{\hspace{2.6 cm}}^{k-r \text{ (even)}} \\

\begin{array}{cc}
\framebox{1 2} & \framebox{3 2 $\cdots$ 3 2 3 1}\\

\framebox{2 3} & \framebox{1 3 $\cdots$ 1 3 1 2}\\

\framebox{1 2} & \framebox{3 2 $\cdots$ {\color{blue}{3 2 3}}  1}\\

\framebox{2 3} & \framebox{1 3 $\cdots$ 1 3 1 2}\\

\vdots & \vdots\\

\framebox{1 2} & \framebox{3 2 $\cdots$ 3 2 3 1}\\


\framebox{3 1} & \framebox{2 1 $\cdots$ 2 1 2 3}\\

\framebox{{\color{red}{2 3}}} & 
\framebox{1 2 $\cdots$ 3 2 3 2}
\\
\framebox{3 1}
 &
\end{array}
\end{array}
\]

\medskip

The vertex $v_0$ labeled 3, which is the last vertex in the shortened $B'$ block, has 3 neighbors labeled 1 and one neighbor labeled 2.  In looking at $v_{-1}$, this vertex is labeled 2 with all neighbors labeled 3, and in looking at $v_{-2}$, this is a vertex labeled 3 with one neighbor labeled 1 and the rest labeled 2. The only other vertices that have the same neighbors as $v_0$ are the second to last entries in a $B_1$ block surrounded by $B_2$ blocks, call one of these vertices $v_i$. However $v_{i-2}$ is a vertex labeled 3 with at least two 1 neighbors, or in the case where $k=4$ a vertex labeled 1.


\textsc{Case 2: $n=2k+r$ so $q=2$}

\medskip

Now suppose $n = 2k + r, $ where $ 1< r < k.$ Since $n$ is even, then $r$ is also even. Consider the blocks of colors $E_1, E_2$ of length $k$ and $M$ of length $r.$

\medskip

\hfil \begin{tabular} {c}$E_1= (1, 2, 1, 2, \cdots, 1, 2, 1, 2) $ \\ $E_2= (3, 1, 3, 1, \cdots, 3, 1, 3, 1)$ \\ $M= (2, 3, 2, 3, \cdots, 2, 3, 2, 3)$ \end{tabular} \hfil 

\medskip

Color the sets of vertices $\{v_1, v_2, \cdots, v_k\}$ and $\{v_{k+1}, v_{k+2}, \cdots, v_{2k}\}$ using the blocks of colors  $E_1$ and $E_2,$ respectively. Then use $M$ to color the remaining $r$ vertices. Below is the coloring $C$ of $C_{2k+r}(1, k)$ with a detailed representation of these blocks of colors. The light-shaded blocks in rows 3 and 4 correspond exactly to the first $k$ vertices in row 1. 

\medskip

\[
\begin{array}{c}
\overbrace{\hspace{2.2 cm}}^{r}\hspace{.1 cm} \overbrace{\hspace{2.2 cm}}^{k-r} \\

\left[\begin{array}{cc}
\framebox{1 2 $\cdots$ 1 2} & \framebox{1 2 $\cdots$ 1 2} \\

\framebox{3 1 $\cdots$ 3 1} & \framebox{3 1 $\cdots$ {\color{red}{3 1}}} \\

\framebox{{\color{red}{2 3}} $\cdots$ 2 3} & \color{gray}{\framebox{1 2 $\cdots$ 1 2}} \\ \color{gray}{\framebox{1 2 $\cdots$ 1 2}} & 
\end{array}
\right]
\end{array}
\]

\medskip

Since $r$ and $k$ are both even, we can easily see from the detailed representation above that $C$ is a proper coloring. By lemma~\ref{lem: no reflections}, it suffices to show that there is not a nontrivial rotation symmetry that preserves coloring. The string of vertices $v_{2k-1}, v_{2k}, v_{2k+1}, v_{2k+2}$ (as highlighted in the block diagram) is the only string of vertices labeled (3, 1, 2, 3) as every other pair of vertices labeled (1, 2) is preceded or followed by another pair of vertices labeled (1, 2). Hence, $C$ is distinguishing.
\medskip

\end{proof}


\subsection{Case: $n$ is odd and $k$ is odd} 

We shall treat the cases where $2k+1<n <3k$ and $ n \geq 3k$ separately. In either case, we propose a proper 3-coloring that is distinguishing. To do so, we conveniently label the vertices of $G$ consecutively as $v_1 , \cdots , v_n,$ though we shall realize later that the coloring works independently of the vertex label. Note that since both $k, n$ are odd, $v_{-k}= v_{(q-1)k+r}$ is labeled an even number. 

\begin{thm}\label{thm:both odd}
Given integers $k, n$ such that  $2 <  k < (n-1)/2,$ if both $n$ and $k$ are odd and $\Aut(C_n(1,k)) \cong D_n,$ then $\chi_D(C_{n}(1,k))=3.$
\end{thm}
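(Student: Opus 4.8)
The plan is to construct an explicit proper $3$-coloring of $C_n(1,k)$ (when $n,k$ are both odd, $2<k<(n-1)/2$, and the automorphism group is dihedral) and then show that it admits no color-preserving rotation, since by Lemma~\ref{lem: no reflections} reflections are already impossible when $k$ is odd. As in the even-$n$ case, I would work with $q,r$ defined by $n=qk+r$, $0\le r<k$, and build the coloring block-by-block. The key difference from Theorem~\ref{thm: n even, k even} is that now $n$ is odd, so $r$ has the opposite parity from $k$ (i.e., $r$ is even, since $n=qk+r$ with $n,k$ odd forces $qk$ odd hence $q$ odd and $r$ even), and the $3$-coloring of the ``main'' cyclic portion no longer closes up as cleanly; I expect the coloring to consist of alternating blocks $B_1,B_2$ of length $k$ along the first $(q-1)k$ vertices, a distinguishing block $B_3$ on the next $k$ vertices, and a truncated block $B'$ of length $r$ on the remaining vertices, mirroring the structure already used.

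First I would write down the three length-$k$ blocks (patterns from $\{1,2,3\}$ that are proper when laid consecutively and when stacked with vertical offset $k$), chosen so that each block differs from the next in every aligned position; the odd length of the blocks is what makes three colors genuinely necessary and what lets the ``seam'' between consecutive copies work. Then I would verify properness: consecutive indices get different colors (immediate from the block patterns and their junctions), and $v_i$ and $v_{i+k}$ get different colors (this is the position-by-position comparison of a block against the block $k$ later, using the parity of $r$ to control how the truncated block $B'$ aligns with $B_3$ before it and $B_1$ after it). This is exactly the kind of finite case-check done in the proof of Theorem~\ref{thm: n even, k even}, just with the parities swapped, so I would handle the subcases $n\ge 3k$ (so $q\ge 3$) and $2k+1<n<3k$ (so $q=2$) separately, with a slightly different small set of blocks in the $q=2$ case.

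For distinguishing: by Lemmas~\ref{lem: no reflections} and~\ref{lem: rotations}, I only need to rule out rotations $v_i\mapsto v_{i+d}$ for proper divisors $d\mid n$; equivalently, I need to show the color sequence $c(v_0),\dots,c(v_{n-1})$ is not periodic with any period $d<n$ dividing $n$. I would arrange the coloring so that some short local color-pattern (for instance a specific run of consecutive colors sitting at the junction of $B_3$ and $B'$, or the unique occurrence of a particular triple or pair of colors) appears exactly once around the cycle; then no nontrivial rotation can preserve the coloring, because it would have to map this unique feature to a disjoint copy of itself. Since $n$ is odd I do not even have to worry about the antipodal-twin issues that complicated the $k=n/2-1$ case, and since reflections are already excluded, a single uniqueness observation finishes the argument: $\chi_D(C_n(1,k))=\chi(C_n(1,k))=3$, the lower bound $3$ being forced because $C_n(1,k)$ contains a triangle (e.g.\ when $3\mid n$ or more simply because it is non-bipartite, $n$ being odd).

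The main obstacle I anticipate is the bookkeeping for the $B'$-to-$B_3$ and $B'$-to-$B_1$ alignments when $r$ is small (notably $r=2$, and possibly degenerate situations like $k$ close to its bounds), together with making sure the chosen ``unique marker'' really is unique: one must check it does not accidentally reappear inside a $B_1$ or $B_2$ block or at one of their seams. Getting the block patterns right so that both properness across $k$-edges and the uniqueness of the marker hold simultaneously — for all odd $k$ and all valid odd $n$ in each of the two ranges — is the delicate part; everything else is a routine imitation of the even case.
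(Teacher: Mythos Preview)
Your overall strategy---construct an explicit proper $3$-coloring and rule out color-preserving rotations, invoking Lemma~\ref{lem: no reflections} to dispose of reflections---is correct, and the lower bound $\chi_D\ge 3$ from non-bipartiteness ($n$ odd) is fine. But your parity computation contains a genuine error: from $n=qk+r$ with $n,k$ odd you \emph{cannot} conclude that $qk$ is odd or that $q$ is odd. In fact $q$ even $\Leftrightarrow r$ odd, and $q$ odd $\Leftrightarrow r$ even. In particular, in your case $q=2$ (i.e.\ $2k<n<3k$) the remainder $r=n-2k$ is \emph{odd}, not even. Since your alignment bookkeeping is premised on ``the even case with parities swapped,'' this error would derail the block analysis you outline.

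The paper's proof reflects exactly this split. For $n\ge 3k$ it does \emph{not} use alternating length-$k$ blocks $B_1,B_2,B_3$ at all; instead it gives a parity-based coloring (odd-indexed $v_i$ in one range get color~$1$, even-indexed $v_i$ in another range get color~$3$, and a short transition region gets color~$2$), and the distinguishing step is a one-line parity argument: any color-preserving rotation would have even period, impossible since $n$ is odd. For $q=2$, where $r$ is odd, the paper further subdivides by whether $2r\le k-1$ or $2r\ge k+1$, and within each of these introduces a second parameter (writing $k=pr+\ell$ in the first, comparing $2r-k$ against $k-r$ in the second), producing seven or eight separate block diagrams with individually verified unique markers. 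So your expectation of ``a slightly different small set of blocks in the $q=2$ case'' substantially underestimates what is needed; the odd-$k$ blocks do not compose as cleanly as the even-$k$ ones from Theorem~\ref{thm: n even, k even}, and both properness across $k$-edges and uniqueness of the marker have to be re-established in each subcase.
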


\begin{proof} As indicated, we split it into three cases:

\textsc{Case 1: $ n \geq 3k$}
\medskip

 Let $q, r$ be integers such that $0 \leq r < k$ and $n=qk+r.$ Consider the following 3-proper coloring $C$ of the vertices: assign color 1 to all odd-indexed vertices $v_i \in \{v_1, v_2, \cdots, v_{-k}\}.$ Next, assign color 3 to even-indexed vertices $v_i \in \{v_{k+1}, \cdots, v_n\},$ not including $v_n$ since $n$ is considered odd. Lastly, assign color 2 to all other vertices in $\{v_1, v_2, \cdots, v_k\} \cup \{v_{-k+1},v_{-k+2}, \cdots, v_n\}.$ In other words, for even $i$ such that $1< i< k,$ the vertex $v_i$ is colored 2 and for odd $i$ such that $(q-1)k< i \leq n$ (including $n$), we have $v_i$ colored 2. We claim that the coloring is proper. Moreover, it is distinguishing.

 We proceed to prove that $C$ is proper by showing that the sets made up of vertices with the same color are all independent sets. Consider the set of all 1-colored vertices and denote it $V_1.$ Thus, $V_1= \{v_i | i \text{ is odd and } 1 \leq i \leq (q-1)k+r \}$ by construction. We show that $V_1$ is an independent set. Take $v_i \in V_1.$ Thus, $v_{i-1}$ and $v_{i+1}, v_{i+k}$ (since $k$ is odd) are even-labeled vertices and do not belong to $V_1.$ Moreover, $v_{i-k}$ is  an even-labeled vertex unless $1 \leq i \leq k,$ in which case $v_{i-k} \in \{v_{-k+1},v_{-k+2}, \dots, v_n\}.$ Therefore, $V_1$ is indeed an independent set.

A similar argument can be made for the set $V_3$ of the vertices colored 3. This time, we have $V_3 = \{v_i| i \text{ is even and } k+1 \leq i \leq n\}$ and $v_{i-1}, v_{i+1}, v_{i-k}$ are odd-labeled vertices. Moreover, $v_{i+k}$ is an odd-labeled vertex unless $(q-1)k+r+1 \leq i \leq n,$ in which case $v_1 \in \{ v_1, v_2, \cdots, v_k\}.$ 

Lastly, we show that the set $V_2$ of vertices colored 2 is also an independent set. By construction, we have
 $$V_2= \{v_i|i \text{ is even and } 1\leq i \leq k\} \cup \{v_i|i \text{ is odd and } (q-1)k+r+1 \leq i \leq n\}$$

Here is what $C$ looks like when restricted to $\{v_{-k+1}, v_{-k+2}, \cdots, v_{-1}, v_n, v_1, v_2, \cdots, v_k\}:$

\begin{center} \includegraphics[width=8cm, scale=4]{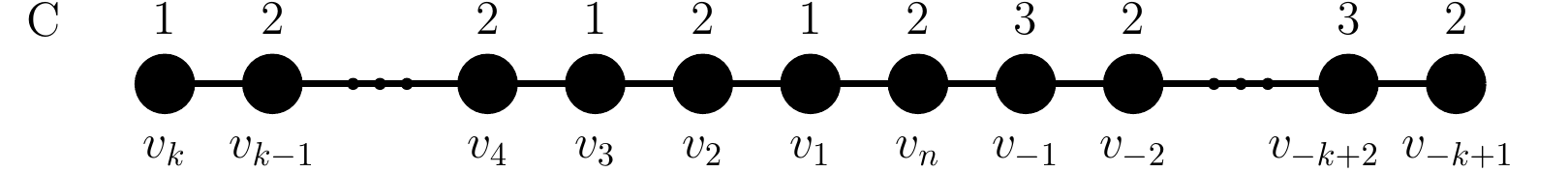}

\end{center}

As we can see from the above illustration, the coloring $C$ restricted to $\{v_{-k+1}, v_{-k+2}, \cdots, v_{-1}, v_n, v_1, v_2, \cdots, v_k\}$ is a sequential-vertex coloring of the vertex list $v_{-k+1}, v_{-k+2}, \cdots, v_{-1}, v_n, v_1, v_2, \cdots, v_k,$ which starts with the alternation of colors 2 and 3 on the first k vertices to end up with color 3 being replaced with color 1 on the last k vertices. Thus, given any pair of vertices $v_i, v_j$ coloured 2, we have $\pm(i-j)= 2\ell,$ where $\ell \in \mathbb{Z}.$ Therefore, the set of vertices colored 2 form an independent set. In summary, $C$ is a proper coloring.

 It remains to show that it is distinguishing. By lemma 7.1, it suffices  to show that the only color-preserving rotation symmetry is the identity. Suppose there is a non-identity rotation symmetry. Thus, by Lemma 7.2, there exists a proper divisor $d$ of $n$ such that $\rho(v_n)= v_d.$ This means that $C(v_n) = C(v_d).$  Since $d$ is a proper divisor of $n,$ thus $d$ must be a number between 1 and $k$ by construction. In particular, $d$ must also be a multiple of 2 since for any pair of vertices $v_i, v_j$ coloured with 2, we have $\pm(i-j)= 2\ell,$ where $\ell \in \mathbb{Z}.$ The fact that $d$ is a proper divisor of odd $n$ and also a multiple of $2$ yields a contradiction. Hence, $C$ is indeed distinguishing.

\textsc{Case 2: If $n= 2k+r,$ where $1 < r < k$ and $2r\leq k-1$}
\medskip

Note that in this case, $r$ is odd and we consider integers $\ell$ and $p$ such that $k=pr+\ell$ and $0<\ell<r$. 

Consider the following arrangement of colors where the first two rows are made up of $p$ blocks of length $r$ with a possible shorter block of length $\ell$ (if $\ell >0$) added to the end. The third row consists of just one block of length $r$. Furthermore, the first row is used to color the sequence of vertices $v_1, v_2, \cdots, v_k,$ second row to color vertices $v_{k+1}, v_{k+2}, \cdots, v_{2k},$ and third row to color the remaining vertices. 

\[
\begin{array}{c}
\overbrace{\hspace{2.2 cm}}^{r}\hspace{.3 cm} \overbrace{\hspace{2.2 cm}}^{r} \hspace{.3cm}\overbrace{\hspace{2.2 cm}}^{r}
\hspace{1 cm}
\overbrace{\hspace{2.2 cm}}^{r} \hspace{.3 cm} \overbrace{\hspace{.5 cm}}^{\ell}\\

\left[\begin{array}{cccccc}
\framebox{\hspace{.5cm} $R_1$ \hspace{.5cm} } & \framebox{3 2 $\cdots$ 3 2 1} & \framebox{3 2 $\cdots$ 3 2 1}&\cdots & \framebox{3 2 $\cdots$ 3 2 1} & \framebox{$L_1$}\\

\framebox{\hspace{.5cm} $R_2$ \hspace{.5cm} } & \framebox{2 1 $\cdots$ 2 1 3} & \framebox{2 1 $\cdots$ 2 1 3}&\cdots & \framebox{2 1 $\cdots$ 2 1 3} & \framebox{$L_2$} \\

\framebox{\hspace{.5cm} $R_3$ \hspace{.5cm} } &  & & & & 
\end{array}
\right]
\end{array}
\]

where the blocks $R_1, R_2, R_3$ of length $r$ and blocks $L_1, L_2$ of length $\ell$ vary with respect to $\ell.$ In particular, we have

\bigskip

\[
\begin{tabular}{|c|c|c|c|c|}
\hline
& $\ell = 0$ & $\ell = 1$ & $\ell >1$ and odd & $\ell>1$ and even \\
\hline
$R_1$ & \framebox{1 3 $\cdots$ 1 3 1} & \framebox{3 2 $\cdots$ 3 2 1} & \framebox{3 2 $\cdots$ 3 2 1}& \framebox{1 3 $\cdots$ 1 3 1}\\
\hline 
$R_2$ & \framebox{2 1 3 $\cdots$ 1 3 } & \framebox{2 1 $\cdots$ 2 1 3} & \framebox{2 1$\cdots$ 2 1 3}& \framebox{3 2$\cdots$ 3 2 3}\\

\hline 
$R_3$ & \framebox{1 3 2  $\cdots$ 3 2} & \framebox{1 2 $\cdots$ 1 2 1} & \framebox{3 2 $\cdots$ 3 2 1}& \framebox{2 1  $\cdots$ 2 1 2}\\
\hline
$L_1$ &N/A&  \framebox{3} & \framebox{2 1 $\cdots$ 2 1 3} & \framebox{3 1 $\cdots$ 3 1}\\
\hline
$L_2$ & N/A & \framebox{2} & \framebox{1 3 $\cdots$ 1 3 1} & \framebox{2 3 $\cdots$ 2 3}\\
\hline
\end{tabular}
\]

\bigskip

We proceed to show that the coloring is proper and distinguishing on a case-by-case basis. We start with the case where $\ell =0.$

\textsc{Subcase 1: Let $\ell = 0$ }.

Thus, $k= pr.$ Let $L_1$ and $L_2$ be empty and the other blocks $R_1, R_2,$ and $R_3$ as given below. Note that the added light-shaded blocks in rows 3 and 4 represent row 1 re-positioned so that the first $k-r$ vertices (positions) in row 1 are vertically aligned with the last $k-r$ vertices in row 2 and the last $r$ vertices (positions) in row 1 are vertically aligned with the $r$ vertices in row 3.

\[
\begin{array}{c}
\overbrace{\hspace{2.4 cm}}^{r}\hspace{.4 cm} \overbrace{\hspace{2.4 cm}}^{r} \hspace{.35cm}\overbrace{\hspace{2.5 cm}}^{r}
\hspace{1cm}
\overbrace{\hspace{2.5 cm}}^{r}\\

\left[\begin{array}{ccccc}
\framebox{1 3 1 $\cdots$ 1 3 1} & \framebox{3 2 3 $\cdots$ 3 2 1} & \framebox{3 2 3 $\cdots$ 3 2 1}& \cdots & \framebox{3 2 3 $\cdots$ 3 2 1}\\

\framebox{{\color{red}{2}} 1 3 $\cdots$ 3 1 3} & \framebox{2 1 2 $\cdots$ 2 1 3} & \framebox{2 1  2 $\cdots$ 2 1 3}& \cdots & \framebox{2 1 2 $\cdots$ 2 1 3} \\

\framebox{1 3 2 $\cdots$ 2 3 2} & \color{gray}{\framebox{1 3 1 $\cdots$ 1 3 1}} & \color{gray}{\framebox{3 2 3 $\cdots$ 3 2 1}} & \color{gray}{\cdots} & 
 \color{gray}{\framebox{3 2 3 $\cdots$ 3 2 1}} \\ \color{gray}{\framebox{3 2 3 $\cdots$ 3 2 1}} & & &
\end{array}
\right]
\end{array}
\]

We can easily see from the above detailed arrangement of colors that the coloring is proper. By Lemma~\ref{lem: no reflections}, it suffices to show that there is not a nontrivial rotation symmetry that preserves the coloring. To this end, note that the vertex $v_{k+1}$ is the only vertex labeled 2 whose neighbors are all labeled 1. Hence, the coloring is distinguishing with respect to the dihedral group.

\bigskip

\textsc{Subcase 2: Let $\ell = 1$ (thus, $p$ is even). }

Let the blocks $R_1, R_2, R_3$ of length $r$ and blocks $L_1, L_2$ of length 1 be as given below.

\[
\begin{array}{c}
\overbrace{\hspace{2.6 cm}}^{r}\hspace{.4 cm} \overbrace{\hspace{2.4 cm}}^{r} \hspace{.35cm}\overbrace{\hspace{2.5 cm}}^{r}
\hspace{1cm}
\overbrace{\hspace{2.5 cm}}^{r} \hspace{.2cm} \overbrace{\hspace{.5 cm}}^{\ell}\\

\left[\begin{array}{cccccc}
\framebox{3 2 3 $\cdots$ 3 2 \, 1} & \framebox{3 2 3 $\cdots$ 3 2 1} & \framebox{3 2 3 $\cdots$ 3 2 1}& \cdots & \framebox{3 2 3 $\cdots$ 3 2 1}& \framebox{3}\\

\framebox{2 1 2 $\cdots$ 2 1 \, 3} & \framebox{2 1 2 $\cdots$ 2 1 3} & \framebox{2 1  2 $\cdots$ 2 1 3}& \cdots & \framebox{2 1 2 $\cdots$ 2 1 3}& \framebox{2}\\

\framebox{1 2 1 $\cdots$ 1 {\color{red}{2}} \, 1} & \color{gray}{\framebox{3 2 3 $\cdots$ 3 2 1}} & \color{gray}{\framebox{3 2 3 $\cdots$ 3 2 1}} & \color{gray}{\cdots} & \color{gray}{\framebox{3 2 3 $\cdots$ 3 2 1}} & \color{gray}{\framebox{3}} \\ 

\color{gray}{\framebox{2 3  2 $\cdots$ 2 1 $\mid$ 3}} & & &
\end{array}
\right]
\end{array}
\]

Here, $v_{-1}$ is the only vertex labeled 2 whose neighbors are all labeled 1. Similarly, the coloring is proper and distinguishing. 

\bigskip

\textsc{Subcase 3: Let $\ell > 1$ and odd (thus, $p$ is even).}

Let the blocks $R_1, R_2, R_3$ and blocks $L_1, L_2$ be as given below.

\[
\begin{array}{c}
\overbrace{\hspace{3.9cm}}^{r}\hspace{.4 cm} \overbrace{\hspace{2.3 cm}}^{r} \hspace{.4cm}\overbrace{\hspace{2.5 cm}}^{r}
\hspace{.9cm}
\overbrace{\hspace{2.7 cm}}^{r} \hspace{.3cm} \overbrace{\hspace{2.6 cm}}^{\ell}\\

\left[\begin{array}{cccccc}
\framebox{3 2 $\cdots$ 3 2 \, 3 2  $\cdots$ 3 2 1} & \framebox{3 2 3 $\cdots$ 3 2 1} & \framebox{3 2 3 $\cdots$ 3 2 1}& \cdots & \framebox{3 2 3 $\cdots$ 3 2 1}& \framebox{2 1 2 $\cdots$ 2 1 3}\\

\framebox{2 1 $\cdots$ 2 1 \, 2 1 $\cdots$ 2 1 3} & \framebox{2 1 2 $\cdots$ 2 1 3} & \framebox{2 1 2 $\cdots$ 2 1 3}& \cdots & \framebox{2 1 2 $\cdots$ 2 {\color{red}{1 3}}}& \framebox{\color{red}{1 3 1 $\cdots$ 1 3 1}}\\

\framebox{3 2 $\cdots$ 3 2 \, 3 2 $\cdots$ 3 2 1} & \color{gray}{\framebox{3 2 3 $\cdots$ 3 2 1}} & \color{gray}{\framebox{3 2 3 $\cdots$ 3 2 1}} & \color{gray}{\cdots} & \color{gray}{\framebox{3 2 3 $\cdots$ 3 2 1}} & \color{gray}{\framebox{3 2 3 $\cdots$ 3 2 3}} \\ 

\color{gray}{\framebox{2 3 $\cdots$ 2 1 $\mid$ 2 1 $\cdots$ 2 1 3}} & & & \\
\underbrace{\hspace{1.9cm}}_{\ell - r\text{ (even)}}\hspace{.1 cm} \underbrace{\hspace{2 cm}}_{\ell\text{ (odd)}} &&&&

\end{array}
\right]
\end{array}
\]

When restricted to the sequence of vertices $v_{2k-\ell-2}, v_{2k-\ell-1}, \cdots, v_{2k},$ the above proper coloring alternates labels 1 and 3. Since $\ell > 1,$ the length of this sequence is at least five, which makes it the longest of its kind. As a result, no nontrivial rotation preserves the coloring. Therefore, by Lemma~\ref{lem: no reflections}, the coloring is distinguishing with respect to the dihedral group.

\bigskip

\textsc{Subcase 4: Let $\ell > 1$ and even (thus $p$ is odd).}

Let the blocks $R_1, R_2, R_3$ and blocks $L_1, L_2$ be as given below.

\[
\begin{array}{c}
\overbrace{\hspace{4 cm}}^{r}\hspace{.4 cm} \overbrace{\hspace{2.4 cm}}^{r} \hspace{.35cm}\overbrace{\hspace{2.5 cm}}^{r}
\hspace{1cm}
\overbrace{\hspace{2.7 cm}}^{r} \hspace{.2cm} \overbrace{\hspace{2.4 cm}}^{\ell}\\

\left[\begin{array}{cccccc}
\framebox{1 3 $\cdots$ 1 3 1 \, 3 $\cdots$ 1 3 1} & \framebox{3 2 3 $\cdots$ 3 2 1} & \framebox{3 2 3 $\cdots$ 3 2 1}& \cdots & \framebox{3 2 3 $\cdots$ 3 2 1}& \framebox{3 1 3 $\cdots$ 1 3 {\color{red}{1}}}\\

\framebox{{\color{red}{3}} 2  $\cdots$ 3 2 3 \, 2 $\cdots$ 3 2 3} & \framebox{2 1 2 $\cdots$ 2 1 3} & \framebox{2 1  2 $\cdots$ 2 1 3}& \cdots & \framebox{2 1 2 $\cdots$ 2 1 3}& \framebox{2 3 2 $\cdots$ 3 2 3}\\

\framebox{2 1 $\cdots$ 2 1 2 \, 1 $\cdots$ 2 1 2} & \color{gray}{\framebox{1 3 1 $\cdots$ 1 3 1}} & \color{gray}{\framebox{3 2 3 $\cdots$ 3 2 1}} & \color{gray}{\cdots} & \color{gray}{\framebox{3 2 3 $\cdots$ 3 2 1}} & \color{gray}{\framebox{3 2 3 $\cdots$ 2 3 2}} \\ 

\color{gray}{\framebox{3 2 $\cdots$ 3 2 1 $\mid$ 3 $\cdots$ 1 3 1}} & & & \\
\underbrace{\hspace{1.9cm}}_{\ell - r\text{ (odd)}}\hspace{.1 cm} \underbrace{\hspace{2 cm}}_{\ell\text{ (even)}} &&&&

\end{array}
\right]
\end{array}
\]

By construction, the coloring is proper. Moreover, $v_r$ is the only vertex labeled 1 that has all its neighbors labeled 3. Also, $v_{k+1}$ is the only vertex labeled 3 with exactly two neighbors labeled 2. 
As a result, no nontrivial rotation preserves the coloring. Therefore, by Lemma~\ref{lem: no reflections}, the coloring is distinguishing with respect to the dihedral group.

\bigskip

\textsc{Case 3: If $n=2k+r$, where $1 < r< k$ and $2r\geq k+1$}
\medskip

\textsc{Subcase 1: Let $2r-k < k-r$ with $2r-k>1$}.

Consider the following arrangements of sequences of colors below, where the first two rows represents sequences of length $k$ and the third row is a sequence of length $r.$ The remaining light-shaded blocks in rows 3 and 4 represent row 1 re-positioned so that the first $k-r$ vertices (positions) in row 1 are vertically aligned with the last $k-r$ vertices (positions) in row 2 and the last $r$ vertices (positions) in row 1 are vertically aligned with the $r$ vertices in row 3. 

\[
\begin{array}{c}
\overbrace{\overbrace{\hspace{3 cm}}^{2r-k \text{ (odd)}}\hspace{.3 cm} 
\overbrace{\hspace{2.5 cm}}^{2k-3r \text{ (odd)}}}^{k-r\text{ (even)}}\hspace{.3 cm}
\overbrace{\hspace{3 cm}}^{2r-k \text{ (odd)}}
\hspace{.3 cm}
\overbrace{\overbrace{\hspace{2.5 cm}}^{2k-3r \text{ (odd)}}
\hspace{.3 cm}
\overbrace{\hspace{3 cm}}^{2r-k \text{ (odd)}}}^{k-r\text{ (even)}}\\

\left[\begin{array}{ccccc}
\framebox{2 1 2 1 2 $\cdots$ 2 1 2} & \framebox{1 2 1 2 $\cdots$ 2 1} & \framebox{3 1 3 1 3 $\cdots$ 3 1 3} & \framebox{2 1 2 1 $\cdots$ 1 2} &\framebox{1 2 1 2 1 $\cdots$ 1 2 1}\\

\framebox{3 2 1 2 1 $\cdots$ 1 2  1} & \framebox{2 1 2 1 $\cdots$ 1 2} & \framebox{1 3 1 3 1 $\cdots$ 1 3 1} & \framebox{3 2 1 2 $\cdots$ 2 1} & \framebox{2 1 2 1 2 $\cdots$ 2 1 2} \\

\framebox{1 {\color{red}{3 2 3 2 $\cdots$ 2 3 2}}} & \framebox{{\color{red}{3 2 3 2 $\cdots$ 2 3}}} & \framebox{{\color{red}{2}} 1 3 1 3 $\cdots$ 3 1 3} &  \color{gray}{\framebox{2 1 2 1 $\cdots$ 1 2}} & \color{gray}{\framebox{1 2 1 2 1 $\cdots$ 1 2 1} }\\

\color{gray}{\framebox{3 1 3 1 3 $\cdots$ 3 1 3} } & \color{gray}{\framebox{2 1 2 1 $\cdots$ 1 2} } & \color{gray}{\framebox{1 2 1 2 1 $\cdots$ 1 2 1} } &
\end{array}
\right]
\\

\hspace{10 cm}\underbrace{\hspace{5.8 cm}}_{k-r\text{ (even)}}
\end{array}
\]

Use row 1 to color the sequence of vertices $v_1, v_2,\cdots, v_k,$ row 2 to color the sequence of vertices $v_{k+1}, v_{k+2}, \cdots, v_{2k},$ and the first three blocks in row 3 to color the remaining consecutively-indexed vertices. It is not hard to see from the above detailed arrangement of colors that the coloring is proper. By Lemma~\ref{lem: no reflections}, it suffices to show that there is not a nontrivial rotation symmetry that preserves the coloring. Note that the highlighted vertices above (from $v_{2k+2}$ to $v_{-2r+k +1}$) is a unique string of vertices labeled alternately 2 and 3. The longest such string outside of these vertices is only 2 vertices long, and therefore this string is unique as $2r-k > 1$. Therefore, no rotational symmetry will fix the coloring and thus this coloring is distinguishing. 

\textsc{Subcase 2: Let $2r-k < k-r$ with $2r-k=1$ and $2k-3r >1$}.

In this case, the coloring must be altered slightly because of the construction of the blocks in the previous subcase. Consider the coloring below in which the colors of the last two vertices are changed from the last subcase.  

\[
\begin{array}{c}
\overbrace{\overbrace{\hspace{.2 cm}}^{ 2r-k}\hspace{.3 cm} 
\overbrace{\hspace{2.2 cm}}^{2k-3r \text{ (odd)}}}^{k-r\text{ (even)}}
\hspace{.3 cm}
\overbrace{\hspace{.2 cm}}^{2r-k}
\hspace{.3 cm}
\overbrace{\overbrace{\hspace{2.2 cm}}^{2k-3r \text{ (odd)}}
\hspace{.3 cm}
\overbrace{\hspace{.2 cm}}^{2r-k}}^{k-r\text{ (even)}}\\

\left[\begin{array}{ccccc}
\framebox{2} & \framebox{1 2 1 2 $\cdots$ 2 1} & \framebox{3} & \framebox{2 1 2 1 $\cdots$ 1 2} &\framebox{1}\\

\framebox{{\color{red}{3}}} & \framebox{2 1 2 1 $\cdots$ 1 2} & \framebox{1} & \framebox{3 2 1 2 $\cdots$ 2 1} & \framebox{2} \\

\framebox{1} & \framebox{3 2 3 2 $\cdots$ 2 1} & \framebox{3} &  \color{gray}{\framebox{2 1 2 1 $\cdots$ 1 2}} & \color{gray}{\framebox{1} }\\

\color{gray}{\framebox{3} } & \color{gray}{\framebox{2 1 2 1 $\cdots$ 1 2} } & \color{gray}{\framebox{1} } &
\end{array}
\right]
\\

\hspace{4.5 cm}\underbrace{\hspace{3.3 cm}}_{k-r\text{ (even)}}
\end{array}
\]

The coloring given above is proper and note that the highlighted vertex, $v_{k+1}$, is the only vertex labeled 3 with two of its neighbors labeled 2 and two labeled 1. Because this vertex is unique, no rotational symmetry can fix the coloring and thus the coloring is distinguished.

\textsc{Subcase 3: Let $2r-k = 1$ and $2k-3r = 1$ }

Note that if both $2r-k = 1$ and $2k-3r = 1$ we have the specific case of $C_{13}(1,5)$ which has a chromatic distinguishing number of 4 and is investigated in Section 8.

\textsc{Subcase 4: Let $2r-k > k-r$ }

Once again, consider the following arrangement of sequences of colors.

\[
\begin{array}{c}
\overbrace{\hspace{3 cm}}^{k-r \text{ (even)}}\hspace{.4 cm} \overbrace{\hspace{3.2 cm}}^{2r-k \text{ (odd)}}
\hspace{.4 cm}
\overbrace{\hspace{3 cm}}^{k-r \text{ (even)}}\\

\left[\begin{array}{ccc}
\framebox{2 1 2 1 2 $\cdots$ 1 2 1} & \framebox{3 2 $\cdots$ 3 \, 2 3 $\cdots$ 2 3} & \framebox{2 1 2 1 2 $\cdots$ 1 2 1}\\

\framebox{{\color{red}{3}} 2 1 2 1 $\cdots$ 2 1 2} & \framebox{1 3 $\cdots$ 1 \, 3 1 $\cdots$ 3 1} & \framebox{3 2 1 2 1 $\cdots$ 2 1 2} \\

\framebox{1 3 2 3 2 $\cdots$ 3 2 3} & \framebox{2 1 $\cdots$ 2 \, 1 2 $\cdots$ 1 3} & \color{gray}{\framebox{2 1 2 1 2 $\cdots$ 1 2 1} } \\ 
\color{gray}{\framebox{3 2 3 2 3 $\cdots$ 2 3 2}}& \color{gray}{\framebox{3 2 $\cdots$ 3 $\mid$ 2 1 $\cdots$ 2 1} } & \\

 & \underbrace{\hspace{1.6cm}}_{3r-2k\text{ (odd)}}\hspace{.09 cm} \underbrace{\hspace{1.7 cm}}_{k-r\text{ (even)}} &
\end{array}
\right]
\end{array}
\]

Proceed similarly to color the vertices and observe that the coloring is proper. Moreover, the vertex $v_{k+1}$ is the only vertex labeled 3 with exactly two of its neighbors labeled 2 and the others labeled 1. Hence, the coloring is distinguishing with respect to the dihedral group.
\bigskip

\end{proof}

\subsection{Case: $n$ is odd and $k$ is even} 

We will proceed by considering the cases where $k\geq n/3$ and $n/3 < k < n/2-1$. Furthermore, we will need to consider within the case that $n/3 < k < n/2-1$, when $r\leq k/2$ and $r>k/2$. 

\begin{thm}\label{thm:n odd and k even}
Given integers $k, n$ such that  $2 <  k < (n-1)/2,$ if $n$ is odd and $k$ is even and $\Aut(C_n(1,k)) \cong D_n,$ then $\chi_D(C_{n}(1,k))=3.$
\end{thm}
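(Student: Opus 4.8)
The plan is to handle the lower and upper bounds separately, with essentially all the work in the upper bound. For the lower bound, note that since $n$ is odd and $k$ is even with $2<k<(n-1)/2$, none of the exceptional clauses of Theorem~\ref{thm: chromatic num} applies (in particular $k\ge 4$, so $k\ne 2$, and $k\ne (n-1)/2$), whence $\chi(C_n(1,k))=3$ and therefore $\chi_D(C_n(1,k))\ge 3$. So it remains to produce a single proper $3$-coloring of $C_n(1,k)$ admitting no nontrivial color-preserving automorphism. Since $n$ is odd, Lemma~\ref{lem: no reflections} rules out color-preserving reflections in $\Aut(C_n(1,k))\cong D_n$, and by Lemma~\ref{lem: rotations} a nontrivial color-preserving rotation would make the sequence $c(v_0),\dots,c(v_{n-1})$ periodic with period a proper divisor of $n$. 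Hence it is enough to build a proper $3$-coloring whose color sequence has no period properly dividing $n$; in practice I would guarantee this by arranging that some local marker occurs exactly once --- either a vertex whose multiset of neighbor colors is shared by no other vertex, or a maximal alternating run (say of colors $1,3$) strictly longer than every other such run --- since a unique marker must be fixed by every color-preserving automorphism, leaving only the identity.

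For the construction I would follow the block method already used in Theorems~\ref{thm: n even, k even} and~\ref{thm:both odd}. Write $n=qk+r$ with $0\le r<k$; since $n$ is odd and $k$ is even, $r$ is odd. I would split into $q\ge 3$ (i.e.\ $n\ge 3k$), $q=2$ with $r\le k/2$, and $q=2$ with $r>k/2$, as signposted before the statement. When $q\ge 3$, reuse length-$k$ blocks $B_1,B_2,B_3$ over $\{1,2,3\}$ shaped as in Theorem~\ref{thm: n even, k even}: color $v_1,\dots,v_{(q-1)k}$ by alternating $B_1$ and $B_2$, color $v_{(q-1)k+1},\dots,v_{qk}$ by $B_3$, and color the last $r$ vertices by a truncated block $B'$ derived from $B_2$; then (possibly after recoloring $v_0$, as in that theorem) a vertex that is uniquely colored $3$ with all neighbors colored $1$ serves as the marker. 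When $q=2$, color $\{v_1,\dots,v_k\}$ and $\{v_{k+1},\dots,v_{2k}\}$ with two alternating length-$k$ blocks and the remaining $r$ vertices with a middle block, as in Case~2 of Theorem~\ref{thm: n even, k even} and Case~3 of Theorem~\ref{thm:both odd}, choosing the blocks so that a short anomalous string (for instance a uniquely long $\{1,3\}$-alternation, or the unique vertex labeled $3$ with exactly two neighbors labeled $2$) appears once; the split $r\le k/2$ versus $r>k/2$ governs whether the anomalous alternating segment fits inside a single block or must straddle two, which forces the two layouts to differ.

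The routine but delicate part is checking properness. Along $1$-edges this is immediate from the block patterns; along $k$-edges one must check that $v_i$ and $v_{i+k}$ never share a color, which reduces to a parity comparison of vertically aligned entries in the block diagram, row by row. The new wrinkle compared with the even--even case is that $r$ is \emph{odd}, so the truncated block $B'$ (respectively the middle block) no longer aligns cleanly with the blocks above and below it; I expect to need one or two boundary color corrections at the seam --- analogous to the ``switch $v_0$ to color $3$'' move and the $r=2$ adjustment in Theorem~\ref{thm: n even, k even}, and to the several parity subcases in Theorem~\ref{thm:both odd} --- to restore properness while keeping the marker unique. I also anticipate a short finite list of small $(n,k)$ near the bottom of the range where the generic layout degenerates; these I would dispatch by direct inspection. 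The real obstacle is thus not a single idea but the parity bookkeeping at the boundary blocks, aggravated in the $q=2$, $r>k/2$ regime where the alternating segments overlap and the diagram must be drawn carefully.
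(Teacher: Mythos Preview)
Your proposal is correct and follows essentially the paper's approach: the same lower bound via Theorem~\ref{thm: chromatic num}, the same reduction via Lemmas~\ref{lem: no reflections} and~\ref{lem: rotations} to building a non-periodic proper $3$-coloring, the same three-way case split on $q$ and on $r\le k/2$ versus $r>k/2$, and the same block-coloring method with a unique local marker. The one substantive difference is in the $q\ge 3$ case: rather than starting from the even--even layout $B_1,B_2,\dots,B_3,B'$ and patching the odd-$r$ seam, the paper replaces the initial block by a new purely alternating block $B_4=(3,1,3,1,\dots,3,1)$ and takes $B'=(2,1,2,\dots,1,2)$; this makes the vertical alignment checks with odd $r$ come out cleanly without ad hoc corrections, and the marker is then simply the uniquely long $\{1,3\}$-alternating run sitting in $B_4$. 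Your plan would work too, but you should expect the promised ``one or two boundary corrections'' to multiply into exactly the kind of parity subcases you cite from Theorem~\ref{thm:both odd}; the paper's $B_4$ trick is what lets it avoid that in this case.
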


\begin{proof}

\textsc{Case 1: Let $n/3<k<n/2 -1$ such that $n=2k+r$ hence $r$ odd and $ r\leq k/2$}.

\bigskip 

Let $r\leq k/2$ so that $k=mr+\ell$ for some integers $\ell<r$ and $m \geq 2$.

Consider the following arrangement of colors where the first two rows are made up of blocks of length $r$ with a possible shorter block of length $\ell$ (if $\ell>0$) added to the end. The third row consists of just one block of length $r$. Note that in the fourth row, the block is comprised of a portion of block $B$ of length $r-\ell$ (we will call $B'$) and $E_1$ of length $\ell$.

\[
\begin{array}{c}
\overbrace{\hspace{1.4 cm}}^{r}\hspace{.2 cm} \overbrace{\hspace{.5 cm}}^{r}
\hspace{1 cm} \overbrace{\hspace{.4 cm}}^{r}
\hspace{.2 cm}
\overbrace{\hspace{.4 cm}}^{\ell}\\

\left[
\begin{array}{ccccc}
\framebox{\,\,\,\,\, A \,\,\,\,\,} & \framebox{B} & \cdots & \framebox{B} & \framebox{$E_1$}\\

\framebox{\,\,\,\,\, C \,\,\,\,\,} & \framebox{D} & \cdots & \framebox{D} & \framebox{$E_2$} \\

\framebox{\,\,\,\,\, E \,\,\,\,\,} & \color{gray}{\framebox{A}}  & \color{gray}{\cdots} & 
 \color{gray}{\framebox{B}} & \color{gray}{\framebox{B}}\\

 \color{gray}{\framebox{B' $\mid$ $E_1$ }} & & & &
\end{array}
\right]
\end{array}
\]

\bigskip

\textsc{Subcase 1: Let $\ell = 0$ }.

\bigskip
Let $E_1$ and $E_2$ be empty and the other blocks as given below. 

\[
\begin{array}{c}
\overbrace{\hspace{2.4 cm}}^{r}\hspace{.3 cm} \overbrace{\hspace{2.4 cm}}^{r}
\hspace{1cm}
\overbrace{\hspace{2.4 cm}}^{r}\\
\left[\begin{array}{cccc}
\framebox{1 3 1 $\cdots$ 1 3 1} & \framebox{3 2 3 $\cdots$ 3 2 1} & \cdots & \framebox{3 2 3 $\cdots$ 3 2 1}\\
\framebox{{\color{red}{2}} 1 3 $\cdots$ 3 1 3} & \framebox{2 1 2 $\cdots$ 2 1 3} & \cdots & \framebox{2 1 2 $\cdots$ 2 1 3} \\
\framebox{1 3 2 $\cdots$ 2 3 2} & \color{gray}{\framebox{1 3 1 $\cdots$ 1 3 1}}  & \color{gray}{\cdots} & 
 \color{gray}{\framebox{3 2 3 $\cdots$ 3 2 1}}\\
\color{gray}{\framebox{3 2 3 $\cdots$ 3 2 1}} & & &
\end{array}
\right]
\end{array}
\]

\medskip

The vertex $v_{k+1}$ highlighted above is the only vertex labeled 2 whose neighbors are all labeled 1. Because this vertex is unique, the coloring is distinguished by dihedral symmetries. 

\bigskip

\textsc{Subcase 2: Let $\ell = 1$ }.

\bigskip

Let the blocks be given as below with $E_1$ and $E_2$ of length 1. 

\[
\begin{array}{c}
\overbrace{\hspace{2.4 cm}}^{r}\hspace{.3 cm} \overbrace{\hspace{2.4 cm}}^{r}
\hspace{1.2 cm}
\overbrace{\hspace{2.4 cm}}^{r} \hspace{.3 cm} \overbrace{\hspace{.2 cm}}^{\ell} \\

\left[\begin{array}{ccccc}
\framebox{3 2 3 $\cdots$ 3 2 \, 1} & \framebox{3 2 3 $\cdots$ 3 2 1} & \cdots & \framebox{3 2 3 $\cdots$ 3 2 1}  & \framebox{3}\\

\framebox{2 1 2 $\cdots$ 2 1 \, 3} & \framebox{2 1 2 $\cdots$ 2 1 3} & \cdots & \framebox{2 1 2 $\cdots$ 2 1 3} & \framebox{2} \\

\framebox{1 2 1 $\cdots$ 1 {\color{red}{2}} \, 1} & \color{gray}{\framebox{3 2 3 $\cdots$ 3 2 1}}  & \color{gray}{\cdots} & 
 \color{gray}{\framebox{3 2 3 $\cdots$ 3 2 1}} & \color{gray}{\framebox{3}} \\

 \color{gray}{\framebox{2 3 2 $\cdots$ 2 1 $\mid$ 3}} & & & &
\end{array}
\right]
\end{array}
\]
\medskip

The vertex $v_{-1}$ highlighted above is the only vertex labeled 2 whose neighbors are all labeled 1. Because this vertex is unique, the coloring is distinguished by dihedral symmetries. 

\bigskip

\textsc{Subcase 3: Let $\ell > 1$ and $m$ odd }.

\bigskip

Use the same arrangement as in Subcase 2 (with $\ell = 1$) with extended $E_1$ and $E_2$ blocks. 

\[
\begin{array}{c}
\overbrace{\hspace{4.4 cm}}^{r}\hspace{.3 cm} \overbrace{\hspace{2.4 cm}}^{r}
\hspace{1 cm}
\overbrace{\hspace{2.4 cm}}^{r} \hspace{.3 cm} \overbrace{\hspace{2.4 cm}}^{\ell < r \text{ (odd)}} \\

\left[\begin{array}{ccccc}
\framebox{3 2 3 $\cdots$ 2 3 2 \, 3 $\cdots$ 3 2 1} & \framebox{3 2 3 $\cdots$ 3 2 1} & \cdots & \framebox{3 2 3 $\cdots$ 3 2 1}  & \framebox{3 1 3 $\cdots$ 3 1 3}\\

\framebox{2 1 2 $\cdots$ 1 2 1 \, 2 $\cdots$ 2 1 3} & \framebox{2 1 2 $\cdots$ 2 1 3} & \cdots & \framebox{2 1 2 $\cdots$ 2 1 3} & \framebox{2 3 2 $\cdots$ 2 3 2} \\

\framebox{{\color{red}{1 2 1 $\cdots$ 2 1 2 \, 1 $\cdots$ 1 2 1}}} & \color{gray}{\framebox{3 2 3 $\cdots$ 3 2 1}}  & \color{gray}{\cdots} & 
 \color{gray}{\framebox{3 2 3 $\cdots$ 3 2 1}} & \color{gray}{\framebox{3 2 3 $\cdots$ 3 2 3}}\\

 \color{gray}{\framebox{2 3 2 $\cdots$ 3 2 1 $\mid$ 3 $\cdots$ 3 1 3}} & & & & \\

 \underbrace{\hspace{2.3 cm}}_{\ell - r\text{ (even)}}\hspace{.1 cm} \underbrace{\hspace{1.7 cm}}_{\ell\text{ (odd)}} &&&&
\end{array}
\right]
\end{array}
\]

\medskip

The vertices $v_{2k+1}$ to $v_n$ highlighted above represent a unique labeled string of vertices (alternating labels 1 and 2) of length $r$ as every other block contains a 3 and is immediately proceeded or followed by a vertex labeled 3. Because this string of vertices is unique, the coloring is distinguished by dihedral symmetries.

\textsc{Subcase 4: Let $\ell > 1$ and $m$ even and hence $\ell$ even }.

\bigskip

Use the blocks as given below.  

\[
\begin{array}{c}
\overbrace{\hspace{4.4 cm}}^{r}\hspace{.3 cm} \overbrace{\hspace{2.4 cm}}^{r}
\hspace{1 cm}
\overbrace{\hspace{2.4 cm}}^{r} \hspace{.3 cm} \overbrace{\hspace{2.4 cm}}^{\ell < r \text{ (even)}} \\

\left[\begin{array}{ccccc}
\framebox{1 3 1 $\cdots$ 3 1 \, 3 1 $\cdots$ 1 3 1} & \framebox{3 2 3 $\cdots$ 3 2 1} & \cdots & \framebox{3 2 3 $\cdots$ 3 2 1}  & \framebox{3 1 3 $\cdots$ 1 3 1}\\

\framebox{3 2 3 $\cdots$ 2 3 \, 2 3 $\cdots$ 3 2 3} & \framebox{2 1 2 $\cdots$ 2 1 3} & \cdots & \framebox{2 1 2 $\cdots$ 2 1 3} & \framebox{2 3 2 $\cdots$ 3 2 3} \\

\framebox{{\color{red}{2 1 2 $\cdots$ 1 2 \, 1 2 $\cdots$ 2 1 2}}} & \color{gray}{\framebox{1 3 1 $\cdots$ 1 3 1}}  & \color{gray}{\cdots} & 
 \color{gray}{\framebox{3 2 3 $\cdots$ 3 2 1}} & \color{gray}{\framebox{3 2 3 $\cdots$ 2 3 2}}\\

\color{gray}{\framebox{3 2 3 $\cdots$ 2 1 $\mid$ 3 1 $\cdots$ 1 3 1}} & & & & \\

\underbrace{\hspace{2 cm}}_{\ell - r\text{ (odd)}}\hspace{.1 cm} \underbrace{\hspace{2 cm}}_{\ell\text{ (even)}} &&&&
\end{array}
\right]
\end{array}
\]

\medskip

The vertices $v_{2k+1}$ to $v_n$ highlighted above represent a unique labeled string of vertices (alternating labels 2 and 1) of length $r$ as every other block contains a 3 and is immediately proceeded or followed by a vertex labeled 3. Because this string of vertices is unique, the coloring is distinguished by dihedral symmetries.

\bigskip

\textsc{Case 2: Let $n/3<k<n/2 -1$ such that $n=2k+r$ hence $r$ odd and $ r > k/2$}. 
\medskip

It can be verified that the following arrangement of these sequences yield a proper coloring $C$ of $C_n(1, k)$  that is also distinguishing.

\medskip 

\textsc{Subcase 1: Let $2r-k < k-r$ }.

\bigskip

\[
\begin{array}{c}
\overbrace{\hspace{4cm}}^{k-r \text{ (odd)}}\hspace{.4 cm} \overbrace{\hspace{3.3 cm}}^{2r-k \text{ (even)}}
\hspace{.4 cm}
\overbrace{\hspace{3 cm}}^{k-r \text{ (odd)}}\\

\left[\begin{array}{ccc}
\framebox{1 2 $\cdots$ 1 2 1 2 $\cdots$ 1 2 3} & \framebox{1 3 $\cdots$ 1 3 1 $\cdots$ 1 3} & \framebox{1 2 1 2 1 $\cdots$ 1 2 1}\\

\framebox{2 3 $\cdots$ 2 3 2 3 $\cdots$ 2 3 2} & \framebox{3 1 $\cdots$ 3 1 3 $\cdots$ 3 1} & \framebox{3 1 2 1 2 $\cdots$ 2 1 2} \\

\framebox{{\color{red}{3}} 1 $\cdots$ 3 1 3 1 $\cdots$ 3 1 3} & \framebox{1 3 $\cdots$ 1 3 1 $\cdots$ 1 3} & \color{gray}{\framebox{1 2 1 2 1 $\cdots$ 1 2 3} } \\ 
\underbrace{\color{gray}{\framebox{1 3 $\cdots$ 1 3  1 2 $\cdots$ 1 2 1}}}_{\text{ if } \, 2r-k \, < \, k-r}& \color{gray}{\framebox{2 1 $\cdots$ 2 1 2 $\cdots$ 2 1} } &
\end{array}
\right]
\end{array}
\]

\medskip

The vertex $v_{2k+1}$ highlighted above is the only vertex labeled 3 with two neighbors labeled 2 and two neighbors labeled 1. Because this vertex is unique, the coloring is distinguished by dihedral symmetries. 

\bigskip

\textsc{Subcase 2: Let $2r-k > k-r$ }.

\bigskip

\[
\begin{array}{c}
\overbrace{\hspace{4cm}}^{k-r \text{ (odd)}}\hspace{.4 cm} \overbrace{\hspace{3.3 cm}}^{2r-k \text{ (even)}}
\hspace{.4 cm}
\overbrace{\hspace{3 cm}}^{k-r \text{ (odd)}}\\

\left[\begin{array}{ccc}
\framebox{1 2 $\cdots$ 1 2 1 2 $\cdots$ 1 2 3} & \framebox{1 3 $\cdots$ 1 3 1 $\cdots$ 1 3} & \framebox{1 2 1 2 1 $\cdots$ 1 2 1}\\

\framebox{2 3 $\cdots$ 2 3 2 3 $\cdots$ 2 3 2} & \framebox{3 1 $\cdots$ 3 1 3 $\cdots$ 3 1} & \framebox{3 1 2 1 2 $\cdots$ 2 1 2} \\

\framebox{{\color{red}{3}} 1 $\cdots$ 3 1 3 1 $\cdots$ 3 1 3} & \framebox{1 3 $\cdots$ 1 3 1 $\cdots$ 1 3} & \color{gray}{\framebox{1 2 1 2 1 $\cdots$ 1 2 3} } \\ 
\color{gray}{\framebox{1 3 $\cdots$ 1 3  1 3 $\cdots$ 1 3 1}} & \underbrace{\color{gray}{\framebox{3 1 $\cdots$ 3 1 2 $ \cdots$ 2 1} }}_{\text{ if } 2r-k \, > \,  k -r} &
\end{array}
\right]
\end{array}
\]

\medskip

The vertex $v_{2k+1}$ highlighted above is the only vertex labeled 3 with two neighbors labeled 2 and two neighbors labeled 1. Because this vertex is unique, the coloring is distinguished by dihedral symmetries. 

\bigskip

\textsc{Case 3: $n=pk+r$ for some $p\geq 3$}.

\medskip

We will similar blocks as those used in Section 7.3, given below:

\hfil \begin{tabular}{ccccccccccc}
    $B_1$ & $=$ & ($1$, & $2$, & $3$, & $2$, & $3$, & $\cdots$, & $2$, & $3$, & $1$); \\
    $B_2$ & $=$ & ($2$, & $3$, & $1$, & $3$, & $1$, & $\cdots$, & $3$, & $1$, & $2$); \\
    $B_3$ & $=$ & ($3$, & $1$, & $2$, & $1$, & $2$, & $\cdots$, & $1$, & $2$, & $3$); \\
    $B_4$ & $=$ & ($3$, & $1$, & $3$, & $1$, & $3$, & $\cdots$, & $1$, & $3$, & $1$); \\
    $B'$ & $=$ & ($2$, & $1$, & $2$, & $1$, & $2$, & $\cdots$, & $2$, & $1$, & $2$).
\end{tabular} \hfil
\bigskip

The blocks of colors $B_1$, $B_2$, $B_3$ are constructed in such a way that as long as blocks are not repeated, adjacent vertices will not have the same color. It is also the case between $B_4$ and $B_2$. We give a coloring $C$ of the vertices of $C_n(1,k)$ using colors 1, 2, and 3 as follows: assign colors to $v_1,v_2, \ldots, v_k$ using $B_4$. Next, starting with $B_2$, assign colors to $v_{k+1}, v_{k+2}, \ldots, v_{(q-1)k}$ by alternating the use of sequences $B_2$, $B_3$ on successive collections of $k$ consecutively-indexed vertices. Assign colors to $v_{(q-1)k+1}, v_{(q-2)k+2},\ldots ,v_{qk}$ using $B_1$. Lastly, use the shortened block $B'$ of length $r$ on the remaining vertices, noting that if $r=1$ we are left with the vertex $v_n$ labeled 2. 

 Use the blocks as given below, the first is where $p$ is odd so the repetition of $B_2$ and $B_3$ ends with a $B_2$, and the latter is where $p$ is even and the repetition ends with a $B_3$.

\[
\begin{array}{cc}

\begin{array}{cc}

\begin{array}{c}
{\color{red}{B_4}} \\
B_2 \\
B_3 \\
\vdots \\
B_2 \\
B_1 \\
B'
\end{array}

\begin{array}{cc}
\overbrace{\hspace{2.9 cm}}^{r \text{ (odd)}}\hspace{.3 cm} \overbrace{\hspace{2.2 cm}}^{k-r \text{ (odd)}}\\

\left[\begin{array}{cc}
\framebox{{\color{red}{3 1 3 1 3 $\cdots$ 3 1 3}}} & \framebox{{\color{red}{1 3 $\cdots$ 1 3 1}}}\\

\framebox{2 3 1 3 1 $\cdots$ 1 3 1} & \framebox{3 1 $\cdots$ 3 1 2}\\

\framebox{3 1 2 1 2 $\cdots$ 2 1 2} & \framebox{1 2 $\cdots$ 1 2 3}\\

$\vdots$ & $\vdots$\\


\framebox{2 3 1 3 1 $\cdots$ 1 3 1} & \framebox{3 1 $\cdots$ 3 1 2}\\

\framebox{1 2 3 2 3 $\cdots$ 3 2 3} & \framebox{2 3 $\cdots$ 2 3 1}\\

\framebox{2 1 2 1 2 $\cdots$ 2 1 2} & \color{gray}{\framebox{3 1 $\cdots$ 3 1 3}}\\

\color{gray}{\framebox{1 3 1 3 1 $\cdots$ 1 3 1}} &

\end{array}
\right]
\end{array}

\end{array}

&

\begin{array}{cc}

\begin{array}{c}
{\color{red}{B_4}} \\
B_2 \\
B_3 \\
\vdots \\
B_3 \\
B_1 \\
B'
\end{array}

\begin{array}{cc}
\overbrace{\hspace{2.9 cm}}^{r \text{ (odd)}}\hspace{.3 cm} \overbrace{\hspace{2.2 cm}}^{k-r \text{ (odd)}}\\

\left[\begin{array}{cc}
\framebox{{\color{red}{3 1 3 1 3 $\cdots$ 3 1 3}}} & \framebox{{\color{red}{1 3 $\cdots$ 1 3 1}}}\\

\framebox{2 3 1 3 1 $\cdots$ 1 3 1} & \framebox{3 1 $\cdots$ 3 1 2}\\

\framebox{3 1 2 1 2 $\cdots$ 2 1 2} & \framebox{1 2 $\cdots$ 1 2 3}\\

$\vdots$ & $\vdots$\\


\framebox{3 1 2 1 2 $\cdots$ 2 1 2} & \framebox{1 2 $\cdots$ 1 2 3}\\

\framebox{1 2 3 2 3 $\cdots$ 3 2 3} & \framebox{2 3 $\cdots$ 2 3 1}\\

\framebox{2 1 2 1 2 $\cdots$ 2 1 2} & \color{gray}{\framebox{3 1 $\cdots$ 3 1 3}}\\

\color{gray}{\framebox{1 3 1 3 1 $\cdots$ 1 3 1}} &

\end{array}
\right]
\end{array}

\end{array}
\end{array}
\]
\medskip

The vertices in the block $B_4$ in both cases above represent a unique string of $r$ vertices with alternating labels 3 and 1. Since every other block contains multiple vertices labeled 2, the string is unique and the coloring is distinguished by dihedral symmetries. 

\end{proof}

\section{Graphs $C_n(1,k)$ where $k^2 \equiv \pm 1 \pmod{n}$}
\label{sec: k^2 = pm 1}

\begin{figure}
    \centering

    \begin{tikzpicture}[scale=1.5]
     \node[gray]  at (90:2.3) {$v_1$};
     \node[gray]  at (62.3:2.3) {$v_2$};
     \node[gray]   at (34.6:2.3) {$v_3$};
     \node[gray]   at (6.9:2.3) {$v_4$};
     \node[gray]   at (339.2:2.3) {$v_5$};
     \node[gray]   at (311.5:2.3) {$v_6$};
     \node[gray]   at (283.8:2.3) {$v_7$};
     \node[gray]   at (256.2:2.3) {$v_8$};
     \node[gray]   at (228.5:2.3) {$v_9$};
     \node[gray]   at (200.8:2.3) {$v_{10}$};
     \node[gray]  at (173.1:2.3) {$v_{11}$};
     \node[gray]   at (145.4:2.3) {$v_{12}$};
     \node[gray] at (117.7:2.3) {$v_{13}$};
     \node (0) [draw,shape=circle, fill=black, scale=.4] at (90:2) {};
     \node (1) [draw,shape=circle, fill=black, scale=.4] at (62.3:2) {};
     \node (2) [draw,shape=circle, fill=black, scale=.4] at (34.6:2) {};
     \node (3) [draw,shape=circle, fill=black, scale=.4] at (6.9:2) {};
     \node (11) [draw,shape=circle, fill=black, scale=.4] at (145.4:2) {};
     \node (12)  [draw,shape=circle, fill=black, scale=.4] at (117.7:2) {};
     \node  (10) [draw,shape=circle, fill=black, scale=.4] at (173.1:2) {};
     \node (9) [draw,shape=circle, fill=black, scale=.4] at (200.8:2) {};
     \node (8) [draw,shape=circle, fill=black, scale=.4] at (228.5:2) {};
     \node (7) [draw,shape=circle, fill=black, scale=.4] at (256.2:2) {};
     \node (6) [draw,shape=circle, fill=black, scale=.4] at (283.8:2) {};
     \node (5) [draw,shape=circle, fill=black, scale=.4] at (311.5:2) {};
     \node (4)  [draw,shape=circle, fill=black, scale=.4] at (339.2:2) {};
     \draw (0)--(1)--(2)--(3)--(4)--(5)--(6)--(7)--(8)--(9)--(10)--(11)--(12)--(0);
     \draw (0)--(5)--(10)--(2)--(7)--(12)--(4)--(9)--(1)--(6)--(11)--(3)--(8)--(0);
     \node[blue]  at (90:2.6) {1};
     \node[red]  at (62.3:2.6) {2};
     \node[olive]   at (34.6:2.6) {3};
     \node[blue]   at (6.9:2.6) {1};
     \node[red]   at (339.2:2.6) {2};
     \node[olive]   at (311.5:2.6) {3};
     \node[blue]   at (283.8:2.6) {1};
     \node[red]   at (256.2:2.6) {2};
     \node[olive]   at (228.5:2.6) {3};
     \node[blue]   at (200.8:2.6) {1};
     \node[red]  at (173.1:2.6) {2};
     \node[olive]   at (145.4:2.6) {3};
     \node[violet] at (117.7:2.6) {4};
\end{tikzpicture}

    \caption{A proper distinguishing coloring of $C_{13}(1,5)$ with 4 colors}
    \label{C_13(1,5)}
\end{figure}
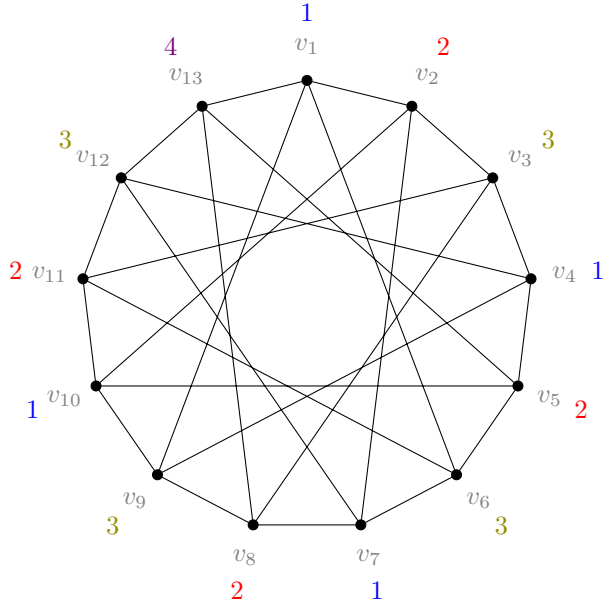

Theorem~\ref{thm: Aut when k^2 equiv pm 1} and other results in Section~\ref{sec: automorphism groups} showed that that when $k^2 \equiv \pm 1 \pmod{n}$, the graphs $C_n(1,k)$ are dart-transitive, but any automorphism not corresponding to a dihedral symmetry on the Hamiltonian cycle comprised of edges in $E_1$ corresponds to a symmetry that maps the edges of $E_k$ to this Hamiltonian cycle while mapping the edges of $E_1$ to those in $E_k$. It follows that, given a coloring $C$ that destroys all dihedral symmetries of $C_n(1,k),$ if $C$ also fixes an edge, then $C$ destroys all non-dihedral symmetries as well. We begin with the case of $C_{13}(1,5)$ to illustrate this idea. 

\begin{prop} \label{prop: C13 1 5}
$\chi_D(C_{13}(1,5))= \chi(C_{13}(1, 5))= 4$.
\end{prop}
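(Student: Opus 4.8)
The plan is to verify that the $4$-coloring shown in Figure~\ref{C_13(1,5)} is a distinguishing proper coloring; the matching lower bound is immediate, since $\chi_D(G)\ge\chi(G)$ for every graph $G$ and $\chi(C_{13}(1,5))=4$ by Theorem~\ref{thm: chromatic num}.

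First I would record the coloring explicitly: assign $v_i$ the color $1$, $2$, or $3$ according as $i\equiv 1,2,0\pmod 3$ for $1\le i\le 12$, and assign $v_{13}$ the color $4$. Properness is a routine check. Along the Hamiltonian cycle of $1$-edges, consecutive vertices get consecutive colors in the pattern $1,2,3,1,2,3,\dots$, and at the seam the colors $c(v_{12})=3$, $c(v_{13})=4$, $c(v_1)=1$ are pairwise distinct. For a $5$-edge $v_iv_{i+5}$ with both endpoints among $v_1,\dots,v_{12}$, the residues of $i$ and $i+5$ differ modulo $3$, so the endpoints get different colors; the remaining $5$-edges are incident with $v_{13}$, whose four neighbors $v_{12},v_1,v_8,v_5$ carry colors $3,1,2,2$, none equal to $4$.

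For the distinguishing property I would invoke Theorem~\ref{thm: Aut when k^2 equiv pm 1}: since $5^2\equiv -1\pmod{13}$, every automorphism of $C_{13}(1,5)$ is uniquely determined by the images of the two endpoints of a single edge. Let $\phi$ be a color-preserving automorphism. As $v_{13}$ is the only vertex colored $4$, we have $\phi(v_{13})=v_{13}$, so $\phi$ maps the neighborhood $N(v_{13})=\{v_{12},v_1,v_8,v_5\}$ to itself while preserving colors; since $v_{12}$ is the unique color-$3$ vertex in $N(v_{13})$, this forces $\phi(v_{12})=v_{12}$. Hence $\phi$ fixes both endpoints of the edge $v_{12}v_{13}$, and the uniqueness clause of Theorem~\ref{thm: Aut when k^2 equiv pm 1} gives that $\phi$ is the identity. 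Therefore the coloring is distinguishing and $\chi_D(C_{13}(1,5))=4$.

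The only delicate point is identifying the four neighbors of $v_{13}$ and their colors correctly, so that a color-preserving automorphism is pinned down on an entire edge; once that is done, the uniqueness statement in Theorem~\ref{thm: Aut when k^2 equiv pm 1} handles all the symmetries at once, and no separate analysis of dihedral versus non-dihedral automorphisms is needed.
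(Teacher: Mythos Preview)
Your proof is correct and follows essentially the same approach as the paper: you use the identical coloring from Figure~\ref{C_13(1,5)}, fix $v_{13}$ as the unique vertex of color~$4$, and then fix $v_{12}$ as its unique color-$3$ neighbor (equivalently, the paper observes that $v_{12}v_{13}$ is the only edge with endpoints colored $3$ and $4$), invoking Theorem~\ref{thm: Aut when k^2 equiv pm 1} to conclude that the identity is the only color-preserving automorphism. Your write-up simply supplies more detail in the properness check and makes the appeal to Theorem~\ref{thm: Aut when k^2 equiv pm 1} explicit.
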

\begin{proof}
    Consider the following coloring: starting at $v_1$, assign colors to $v_1, v_2, \cdots, v_{12}$ by repeatedly using the block of colors $B= (1, 2,3)$, and assign color 4 to $v_{13}.$  We can see in Figure~\ref{C_13(1,5)} that the coloring is proper. Moreover, it is distinguishing since the uniquely colored vertex $v_{13}$ is fixed and the edge $v_{12}v_{13}$ is also fixed, as it is the only edge with endpoints colored 4 and 3. 
\end{proof}

The graph $C_{15}(1,4)$ will form an exception for our next main result, so we establish its distinguishing chromatic number here. Unlike with $C_{13}(1,5)$ the distinguishing chromatic number will be greater than the chromatic number, which is 3, so more effort is required for the lower bound. We begin with a lemma that incidentally shows that the distinguishing coloring of $C_8(1,4)$ in Section~\ref{sec: trivalent} was unique up to the names of the colors.
\begin{lem} \label{lem: Cn(1,4) pre-labeling}
    Let $n$ be an integer with $n \geq 8$. In any distinguishing 3-coloring of $C_n(1,4)$, there exist 8 consecutively-indexed vertices receiving colors in the pattern $C,A,B,C,B,C,A,B$, where $\{A,B,C\}$ is the set of colors used.
\end{lem}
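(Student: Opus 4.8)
The plan is to argue that any proper $3$-coloring of $C_n(1,4)$ that is ``locally forced'' must, on some window of $8$ consecutive vertices, exhibit exactly the stated pattern, and to show that if this pattern never appears then the coloring admits a nontrivial color-preserving automorphism, contradicting the distinguishing hypothesis. First I would set up the local structure: in $C_n(1,4)$ the vertex $v_i$ is adjacent to $v_{i\pm 1}$ and $v_{i\pm 4}$, so the induced subgraph on any $5$ consecutive vertices $v_i,\dots,v_{i+4}$ contains the path $v_iv_{i+1}v_{i+2}v_{i+3}v_{i+4}$ together with the edge $v_iv_{i+4}$, i.e.\ a $5$-cycle. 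Since a $5$-cycle is not $2$-colorable and we are using only $3$ colors, each such window of $5$ vertices must use all three colors, and in fact the coloring restricted to five consecutive vertices is determined up to a small number of patterns. I would enumerate these: writing the colors of $v_i,\dots,v_{i+4}$ as a string over $\{A,B,C\}$, the proper-coloring constraints on the path and the chord $v_iv_{i+4}$ force the string to have the form $xyx'y'x$-type alternation broken in exactly one spot, and a short case check shows the possible ``color-type'' patterns on $5$ consecutive vertices are (up to renaming) $ABCBA$-type windows do not occur because of the chord; the admissible ones are those of the form $XYXYZ$ with $Z\neq X$, etc. This enumeration is the routine combinatorial heart and I would present it as a small table rather than grind through every subcase in prose.

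Next I would propagate: given the color-type pattern on $v_i,\dots,v_{i+4}$, the constraints from $v_{i+5}$ (adjacent to $v_{i+4}$, $v_{i+6}$, $v_{i+1}$, $v_{i+9}$) and from reading the $5$-window $v_{i+1},\dots,v_{i+5}$ pin down the color of $v_{i+5}$ as a function of the preceding colors, with at most one genuine binary choice at each step. Tracking these choices, I would show that the only way to extend the coloring while avoiding the target window $C,A,B,C,B,C,A,B$ everywhere is to fall into a globally periodic pattern (period $3$ or period $4$, say the pattern $A,B,C,A,B,C,\dots$ or $A,B,A,C,A,B,A,C,\dots$ wherever these are proper for the given residue of $n$). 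Each such periodic coloring is invariant under the rotation $v_i\mapsto v_{i+p}$ for the appropriate period $p$ dividing the length, which is a nontrivial automorphism of $C_n(1,4)$ preserving the coloring — contradicting the assumption that the coloring is distinguishing. Hence the target window must occur somewhere, and by cyclic relabeling we may take it to start at $v_0$, giving the stated pattern on $8$ consecutive vertices.

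The main obstacle I anticipate is the bookkeeping in the propagation step: I must make sure the ``binary choice at each step'' analysis is genuinely exhaustive, and in particular that I correctly account for the extra chord-constraint $v_{i+5}v_{i+1}$ (not just the path edge $v_{i+4}v_{i+5}$) when deciding which extensions are legal, and likewise the constraint $v_{i+5}v_{i+9}$ as the window slides. I would organize this by fixing the color string on $v_0,\dots,v_4$, then showing by induction that either the $8$-vertex target pattern appears within the first, say, $12$ vertices, or the coloring is forced to be one of the finitely many periodic colorings; the periodic colorings are then dispatched uniformly by exhibiting the color-preserving rotation. A secondary point to handle carefully is small $n$: for $n=8,9,\dots$ up to some small bound the ``periodic'' colorings may wrap around in degenerate ways, so I would verify the base cases $8\le n\le 12$ (or whatever threshold the induction needs) directly, and then run the general argument for larger $n$.
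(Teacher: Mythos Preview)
Your overall strategy --- show that a $3$-coloring which avoids the target window must be globally periodic, hence invariant under a nontrivial rotation --- is in the right spirit, but the execution is far heavier than necessary, and at least one concrete detail is wrong: your proposed period-$4$ pattern $A,B,A,C,A,B,A,C,\dots$ is not a proper coloring of $C_n(1,4)$, since $v_0$ and $v_4$ are adjacent and both receive $A$. More seriously, the plan leaves the central combinatorial claim (``the only way to extend while avoiding the target window is to fall into a periodic pattern'') entirely unverified; the promised $5$-window enumeration and propagation table are never carried out, so as written there is no proof, only a program of work.

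The paper's argument bypasses all of this with a single clean observation. Rather than analyzing $5$-windows, look at \emph{triples}: some triple $v_i,v_{i+1},v_{i+2}$ must use all three colors (else the graph would be $2$-colored, contradicting the $5$-cycle on $v_0,\dots,v_4$), and some triple must fail to use all three (else the coloring is the period-$3$ pattern $A,B,C,\dots$, preserved by $v_i\mapsto v_{i+3}$). Hence there is a transition index $i$ where $v_i,v_{i+1},v_{i+2}$ are all distinct but $v_{i+1},v_{i+2},v_{i+3}$ are not; normalizing, $v_0,\dots,v_3$ are colored $A,B,C,B$. Now the adjacencies $v_4\sim v_0,v_3$; $v_5\sim v_1,v_4$; $v_6\sim v_2,v_5$; and $v_{-1}\sim v_0,v_3$ force the colors $C,A,B$ on $v_4,v_5,v_6$ and $C$ on $v_{-1}$, yielding the pattern $C,A,B,C,B,C,A,B$ on $v_{-1},\dots,v_6$ immediately. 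No enumeration, no induction, no small-$n$ base cases are needed.
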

\begin{proof}
    Observe first that in any proper coloring there must be at least one index $i$ such that $v_i,v_{i+1},v_{i+2}$ receive three distinct colors; if this is not the case, then $C_n(1,4)$ would be properly colored with just two colors, and this is a contradiction, since $v_0,v_1,v_2,v_3,v_4$ are the vertices of a 5-cycle in $C_n(1,4)$.
			
    Furthermore, we claim that in any \emph{distinguishing} proper coloring of $C_n(1,4)$ using three colors, there must be an index $j$ such that $v_j,v_{j+1},v_{j+2}$ do \emph{not} receive three distinct colors. Indeed, if no such $j$ existed, then the proper coloring would consist of the pattern $A,B,C$ repeated around the vertices, requiring that $n$ be a multiple of 3 and that the map $v_k \mapsto v_{k+3}$ be a color-preserving automorphism of $C_n(1,4)$, a contradiction.
			
    It follows that there must be an index $i$ such that $v_i,v_{i+1},v_{i+2}$ are colored with three distinct colors, but $v_{i+1},v_{i+2},v_{i+3}$ are not. By symmetry, we may suppose that $i=0$ and that the labels on $v_{0},v_1,v_2,v_3$ are $A,B,C,B$. Since $v_4$ is adjacent to both $v_3$ and $v_0$, the color on $v_4$ must be $C$. Likewise, the colors on $v_5$ and $v_6$ must be $A$ and $B$, respectively, and the color on $v_{n-1}$ (which is adjacent to both $v_0$ and $v_3$) must be $C$. This completes the proof.
\end{proof}

\begin{thm}
    $\chi_D(C_{15}(1,4)) = 4$
\end{thm}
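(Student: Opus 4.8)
The plan is to prove the bounds $\chi_D(C_{15}(1,4))\le 4$ and $\chi_D(C_{15}(1,4))\ge 4$ separately. Two preliminary observations set things up. By Theorem~\ref{thm: chromatic num} (the value $k=4$ falls under ``otherwise''), $\chi(C_{15}(1,4))=3$, hence $\chi_D(C_{15}(1,4))\ge 3$. And since $k^2=16\equiv 1\pmod{15}$, the graph is one of those described in Theorem~\ref{thm: Aut when k^2 equiv pm 1}: it has $60$ automorphisms; by the formula $\phi_{0t}(v_i)=v_{ti}$ appearing in that proof, $\mathrm{Stab}(v_0)$ consists of the four maps $v_i\mapsto v_{ti}$ with $t\equiv\pm1,\pm4\pmod{15}$; and the only automorphism fixing both endpoints of an edge is the identity.

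For the upper bound I would exhibit the proper $4$-coloring $c$ defined by $c(v_0)=c(v_7)=4$ and $c(v_i)=1+(i\bmod 3)$ for $i\notin\{0,7\}$; a routine check of the $1$-edges and $4$-edges shows $c$ is proper (no such edge has both endpoints colored $4$, since $v_0$ and $v_7$ differ by $7$). To see that $c$ is distinguishing, let $\phi$ be a color-preserving automorphism. Since $\{v_0,v_7\}$ is exactly the set of vertices colored $4$, $\phi$ permutes $\{v_0,v_7\}$. If $\phi(v_0)=v_0$, then $\phi\in\mathrm{Stab}(v_0)$, and since $7t\not\equiv 7\pmod{15}$ for $t\equiv-1,4,-4$, the condition $\phi(v_7)=v_7$ forces $t\equiv 1$, i.e.\ $\phi=\mathrm{id}$. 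If $\phi$ interchanges $v_0$ and $v_7$, then $\phi$ fixes their unique common neighbor $v_{11}$ (one checks that the neighborhoods of $v_0$ and $v_7$ meet only in $v_{11}$), so $\phi$ permutes the neighborhood $\{v_0,v_7,v_{10},v_{12}\}$ of $v_{11}$ while preserving colors; as $c(v_{10})=2\ne 1=c(v_{12})$, $\phi$ fixes $v_{10}$ and $v_{11}$, whence $\phi=\mathrm{id}$ by Theorem~\ref{thm: Aut when k^2 equiv pm 1}, contradicting $\phi(v_0)=v_7$. Therefore $\chi_D(C_{15}(1,4))\le 4$.

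For the lower bound, suppose for contradiction that $c$ is a distinguishing proper $3$-coloring, with color set $\{A,B,C\}$. By Lemma~\ref{lem: Cn(1,4) pre-labeling}, after applying a suitable rotation we may assume that $c(v_{14}),c(v_0),c(v_1),\dots,c(v_6)$ equals $C,A,B,C,B,C,A,B$. The two $4$-neighbors of $v_{10}$ are $v_6$ and $v_{14}$, colored $B$ and $C$, so properness forces $c(v_{10})=A$. At this point properness against the already-colored vertices restricts each of $c(v_7),c(v_8),c(v_9),c(v_{11}),c(v_{12}),c(v_{13})$ to a two-element set, and the adjacencies $v_7v_8,\,v_8v_9,\,v_{11}v_{12},\,v_{12}v_{13},\,v_7v_{11},\,v_8v_{12},\,v_9v_{13}$ among these vertices cut the number of completions down to a short list. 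A case analysis (conveniently organized by first splitting on $c(v_8)$) then shows that in \emph{every} completion the coloring $c$ is preserved by one of the three involutions $v_i\mapsto v_{-4i}$, $v_i\mapsto v_{5-4i}$, $v_i\mapsto v_{10-4i}$, each of which is an automorphism of $C_{15}(1,4)$ because $k^2\equiv 1\pmod{15}$. This contradicts $c$ being distinguishing, so $\chi_D(C_{15}(1,4))\ge 4$, and combining the two bounds yields $\chi_D(C_{15}(1,4))=4$.

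I expect the lower-bound case analysis to be the main obstacle: Lemma~\ref{lem: Cn(1,4) pre-labeling} together with the forced value $c(v_{10})=A$ reduces the problem to a small constraint-satisfaction problem on six two-valued variables, but no single symmetry breaks all of its solutions, so one must enumerate the cases and match each resulting coloring to the appropriate one of the three involutions. The upper-bound argument is shorter; its one delicate point, ruling out an automorphism interchanging $v_0$ and $v_7$, is handled cleanly by the common-neighbor observation.
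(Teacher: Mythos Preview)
Your proposal is correct and follows essentially the same route as the paper: the lower bound uses Lemma~\ref{lem: Cn(1,4) pre-labeling} to pin down nine colors and then exhausts the remaining completions by exhibiting a color-preserving automorphism in each (the paper writes these in cycle notation, but they are exactly your three involutions $v_i\mapsto v_{c-4i}$ for $c\in\{0,5,10\}$, plus the rotation $v_i\mapsto v_{i+5}$ in the final case, which is indeed also preserved by $c=0$). For the upper bound the paper invokes Theorem~\ref{thm: non-palindrome} rather than verifying distinguishability directly, but your $4$-coloring is precisely the one that theorem produces with $b=7$, so the two arguments coincide.
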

\begin{proof}
    We first show that no distinguishing proper coloring can use 3 colors. Suppose to the contrary that some coloring $c$ does. By Lemma~\ref{lem: Cn(1,4) pre-labeling}, we may suppose that $v_{14},v_0,\dots,v_{6}$ are respectively labeled with $C,A,B,C,B,C,A,B$. Since $v_{10}$ is adjacent to $v_6$ and to $v_{14}$, we have $c(v_{10})=A$. We now proceed by cases on the colors of $v_7,v_8,v_9,v_{11},v_{12},v_{13}$, showing that any proper 3-coloring admits a color-preserving automorphism.
	
    \medskip
    \textsc{Case: $c(v_9)=B$ or $c(v_{11})=C$}. Suppose first that $c(v_9)=B$. Then $c(v_8)=A$ and $c(v_{13})=A$, since each vertex has a neighbor already colored with $B$ and with $C$. Then $c(v_7)=C$. and $c(v_{12})=C$ and consequently $c(v_{11})=B$. At this point all the vertices have been colored, and one can verify that the permutation $(v_1 \ v_{11})(v_2 \ v_7)(v_4 \ v_{14})(v_5 \ v_{10})(v_8 \ v_{13})$ is an automorphism of $C_{15}(1,4)$ preserving the coloring $c$.
			
    Note that the starting pattern of colors appears on vertices $v_6,v_5,v_4,v_3,v_2,v_1,v_0,v_{14}$, in this order, if the roles of colors $B$ and $C$ are switched, so the case where $c(v_{11})=C$ follows exactly the same argument as above to conclude that there is an automorphism of $C_{15}(1,4)$ preserving the coloring.
			
    \medskip
    Assume henceforth that $c(v_9) \neq B$ and $c(v_{11})\neq C$. This forces $c(v_9)=C$ and $c(v_{11})=B$.
			
    \medskip
    \textsc{Case: $c(v_8)=A$ or $c(v_{12})=A$}. By the symmetry argument above, it suffices to suppose that $c(v_8)=A$. It follows that $c(v_7)=C$ and that $c(v_{11})=B$ and $c(v_{12})=C$. Now, if $c(v_{13})=A$, we get the coloring-preserving automorphism $(v_1 \ v_{11})(v_2 \ v_7)(v_4 \ v_{14)}(v_5 \ v_{10})(v_8 \ v_{13})$. If $c(v_{13})=B$, then we get the color-preserving automorphism $(v_0 \ v_{10})(v_1 \ v_6)(v_3 \ v_{13})(v_4 \ v_9)(v_7 \ v_{12})$.
						
    \medskip
    Having handled the cases above, assume henceforth that $c(v_8) \neq A$ and $c(v_{12})\neq A$. This forces $c(v_8)=B$ and $c(v_{12})=C$.
			
    \medskip
    \textsc{Case: $c(v_7)=A$ or $c(v_{13})=A$}. By the prior symmetry argument, it suffices to assume that $c(v_7)=A$. No matter what color $v_{13}$ receives, we have the color-preserving symmetry $(v_0 \ v_5)(v_{2} \ v_{12})(v_3 \ v_8)(v_6 \ v_{11})(v_9 \ v_{14})$.
			
    \medskip
    In light of these cases, we see that $c(v_7) \neq A$ and $c(v_{13})\neq A$. This forces $c(v_7)=C$ and $c(v_{13})=B$. However, then the map $v_{i} \mapsto v_{i+5}$ yields a color-preserving automorphism.
			
    \medskip
    This concludes the necessary cases; we have shown that every proper 3-coloring of $C_{15}(1,4)$ is not distinguishing. On the other hand, $\chi(C_{15}(1,4)) = 3$, and the coloring that alternates colors $A,B,C$ around the vertices is a palindrome-free coloring of $C_{n}(1,4)$ that satisfies Theorem 6.3 of the paper. The Theorem then implies that $\chi_D(C_{15}(1,4))=4$.
\end{proof}

With the exceptions dealt with, we now prove the general result. We will prove the colorings introduced in Section 7 will also be distinguished by any symmetry that swaps the edges comprising of a Hamilton cycle of $E_1$ and those comprising a Hamilton cycle of $E_K$. We do this by finding an edge that is fixed by the coloring. Note that this proof is more than is necessary as most graphs covered in Section 7 will not have an edge swapping symmetry like those discussed in this section, but will be sufficient to prove our result. 

\begin{thm} \label{thm: nondihedral}
For $k^2 \equiv \pm 1 \pmod{n}$ with $k>3$, $\chi_D(C_n(1,k)) = 3,$ except for $C_{13}(1, 5)$ and $C_{15}(1, 4)$ where the distinguishing chromatic number is 4.
\end{thm}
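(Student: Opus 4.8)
The plan is to reduce everything to the colorings already built in Section~\ref{sec: dihedral symmetries}, using the observation recorded at the start of this section: a proper coloring that destroys every dihedral symmetry of $C_n(1,k)$ and in addition fixes some edge (i.e.\ maps it to itself under every color-preserving automorphism) automatically destroys every non-dihedral symmetry, since such a symmetry carries $E_1$ onto $E_k$. So after a few reductions I would reuse the known distinguishing colorings and simply exhibit, in each one, a fixed edge.

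First dispose of the two listed exceptions: $\chi_D(C_{13}(1,5)) = 4$ by Proposition~\ref{prop: C13 1 5} (indeed $\chi(C_{13}(1,5)) = 4$), and $\chi_D(C_{15}(1,4)) = 4$ by the preceding theorem. For every other graph take $1 < k < n/2$; one may also assume $k \neq n/2-1$, since $C_n(1,n/2-1)$ is the wreath graph $W(n/2,2)$ handled in Theorem~\ref{thm: even n and k= n/2-1}. Theorem~\ref{thm: chromatic num} then forces $\chi(C_n(1,k)) \le 3$: a value $4$ would require $k=2$ (excluded, as $k>3$), or $(n,k)=(13,5)$ (excluded), or $k=(n-1)/2$; and in the last case $n=2k+1$ gives $4k^2 \equiv 1 \pmod n$, so $k^2 \equiv \pm 1$ would force $2k+1 \mid 3$ or $2k+1 \mid 5$, impossible for $k > 3$. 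If $k$ is even, both $k^2-1$ and $k^2+1$ are odd, so the hypothesis $k^2 \equiv \pm 1 \pmod n$ makes $n$ odd; hence ``$n$ even, $k$ even'' cannot arise. If $n$ is even and $k$ is odd, then $1 < k < n/2-1$ and $(n,k)\neq(10,3)$, so Theorem~\ref{thm: n even k odd} already gives $\chi_D = 3$, and its proof rules out automorphisms carrying $E_1$ to $E_k$ via Lemma~\ref{lem: one more color}, so nothing more is needed there. What remains is $n$ odd, where $C_n(1,k)$ contains the odd cycle $v_0v_1\cdots v_{n-1}$, so $\chi = 3$ and hence $\chi_D \ge 3$.

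For $n$ odd, Lemma~\ref{lem: no reflections} already rules out color-preserving reflections. The proper $3$-colorings constructed in the proofs of Theorems~\ref{thm:both odd} and~\ref{thm:n odd and k even} are defined purely by their block patterns, and the verification there that no nontrivial rotation preserves them uses only those patterns; hence each of them destroys every dihedral symmetry of $C_n(1,k)$, whether or not $\Aut(C_n(1,k)) \cong D_n$. By the opening paragraph it suffices to locate, in each such coloring, an edge fixed by every color-preserving automorphism. Each of those proofs already singles out a distinguished configuration — usually a vertex $v^\ast$ that is the unique vertex of its color with a prescribed multiset of neighbor-colors, sometimes a uniquely occurring run of a prescribed color-pattern. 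From this a fixed edge can be extracted: every color-preserving automorphism fixes $v^\ast$, and then either some neighbor of $v^\ast$ carries a color used by no other neighbor of $v^\ast$ (and that incident edge is fixed), or a short further inspection of the colors near $v^\ast$ separates the two $E_1$-neighbors of $v^\ast$ from its two $E_k$-neighbors, again pinning down an incident edge; in the run cases one instead distinguishes the two ends of the run, which forces every vertex of the run, hence an $E_1$-edge inside it, to be fixed. Since a fixed edge lies entirely in $E_1$ or entirely in $E_k$ while a non-dihedral automorphism interchanges these two sets, no non-dihedral automorphism can preserve the coloring; together with the dihedral case this makes it distinguishing, so $\chi_D(C_n(1,k)) = 3$.

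The laborious part — the main obstacle — is this last verification, which must be carried out over all the subcases in the proofs of Theorems~\ref{thm:both odd} and~\ref{thm:n odd and k even}: one reads off the block structure around the distinguished configuration and, when the neighbors of $v^\ast$ all share a color, tracks the neighbor-color-multisets of those neighbors (or of vertices one step further out) in order to separate the $E_1$-direction from the $E_k$-direction. Two smaller bookkeeping points also need attention: that the block constructions of those theorems are well-defined for the $(n,k)$ at hand — they are, since $k > 3$ together with $k \neq (n-1)/2,\, n/2-1$ gives $2 < k < (n-1)/2$ — and that the single degenerate subcase discarded inside the proof of Theorem~\ref{thm:both odd} is precisely $(n,k) = (13,5)$, which has already been set aside.
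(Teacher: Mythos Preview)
Your overall strategy is the same as the paper's: exploit the observation that a coloring which kills every dihedral symmetry and in addition fixes some edge automatically kills every $E_1\leftrightarrow E_k$ swap, then recycle the Section~\ref{sec: dihedral symmetries} colorings and locate such an edge subcase by subcase. Your parity remark that $k$ even forces $k^2\pm 1$ odd and hence $n$ odd is correct and is a genuine simplification the paper does not make; the paper's proof actually runs through the ``$n$ even, $k$ even'' case referencing Theorem~\ref{thm: n even, k even}, which by your observation is vacuous here.

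The gap is your assumption that the Section~\ref{sec: dihedral symmetries} colorings can always be used \emph{unmodified}. You write ``simply exhibit, in each one, a fixed edge'' and, when all four neighbors of $v^\ast$ share a color, propose to ``track the neighbor-color-multisets \dots\ one step further out.'' But if the coloring happened to be preserved by some involutory swap fixing $v^\ast$, no depth of local inspection would separate the $E_1$-direction from the $E_k$-direction; you would be trying to distinguish things that are genuinely exchanged by a color-preserving automorphism. The paper does not attempt to rule this out by inspection alone: in several subcases it \emph{alters} the Section~\ref{sec: dihedral symmetries} coloring---switching one or two nearby vertex colors---then re-verifies propriety and dihedral distinguishing, and only then exhibits a fixed edge. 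This occurs, for instance, in the $n$ odd, $k$ odd, $n=2k+r$, $2r\le k-1$ family (the $\ell=0$ and $\ell$ odd subcases), and in multiple $n$ odd, $k$ even subcases with $q$ even or $q=3$. Your outline does not anticipate that such modifications might be needed, and gives no argument that they are not. So the plan is sound and matches the paper's, but the execution is missing precisely at the point the paper itself found delicate enough to require changing the colorings rather than merely reading them.
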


\begin{proof}

Like in the dihedral case, we proceed on a case-by-case basis. Note that the corresponding theorem and case in Section 7 will be listed for reference at the beginning of each case. 

\bigskip

\textsc{Case: $n$ is odd and $k$ is odd (referencing Theorem 7.6 in Section 7.4)} 

\medskip

\textsc{Subcase 1: $n \geq 3k$ (referencing Theorem 7.6 Case 1)} 

\medskip

Consider the distinguishing coloring $C$ given in the case of its dihedral analogue. It can be easily verified that $v_k$ is the only vertex labeled 1 with exactly two neighbors colored 2 and the others colored 3. Similarly, $v_n$ is the only vertex colored 2 with exactly two neighbors colored 1 and the others colored 3. By construction, both vertices $v_0$ and $v_k$ are fixed and, hence, the edge $v_0v_k$ is fixed. 

\medskip

\textsc{Subcase 2: $n= 2k + r$ (referencing Theorem 7.6 Case 2)}

\medskip

Suppose that $\displaystyle 2r \geq k+1.$ We consider the same subcases together with the corresponding distinguishing colorings discussed in the dihedral section. If $2r-k > k-r$ (previously referred as SUBCASE 4), the vertex $v_{k+1}$ is the only vertex labeled 3 with exactly two of its neighbors labeled 2 and the others labeled 1. Moreover, the neighbors labeled 1, namely $v_k$ and $v_{2k+1}$ have distinct neighborhoods. Hence, the edge $v_kv_{k+1}$ is fixed. If $2r-k < k-r$ with $2r-k > 1$ (previously referred as SUBCASE 1), the vertex $v_{k-r+2}$ is labeled 1 with all neighbors labeled 3. Switch the color of $v_{k-r+2}$ to 2. Thus, $v_{k-r+2}$ becomes the only vertex labeled 2 that has all its neighbors labeled 3. Most importantly, the coloring is still distinguishing with respect to the dihedral group. Moreover, since $v_{-r+2}$ has a unique neighborhood among the neighbors of $v_{k-r+2},$ the edge $v_{k-r+2}v_{-r+2}$ is fixed. If $2r-k < k-r$ with $2r-k = 1$ and $2k-3r > 1$ (previously referred as SUBCASE 2), recall that the vertex $v_{k+1}$ is the only vertex labeled 3 with two of its neighbors labeled 2 and two labeled 1. Moreover, the neighbors labeled, namely $v_{2k+1}$ and $v_k,$ have distinct neighborhoods. Thus, the edge $v_{k+1}v_k$ is fixed. Lastly, if $2r-k < k-r$ with $2r-k = 2k-3r= 1$ (previously referred as SUBCASE 3), then we have the special graph $C_{13}(1, 5)$ (see Theorem~\ref{thm: chromatic num}). We saw at the beginning of this section a construction of a coloring that is distinguishing. 

\medskip

\textsc{Subcase 3: $n= 2k + r$ (referencing Theorem 7.6 Case 3)}

\medskip

Suppose $\displaystyle 2r \leq k-1.$ which implies $k = pr+\ell.$ Again, we consider the same subcases discussed in the section dedicated to the dihedral group. If $\ell = 0$ (previously referred as SUBCASE 1), the vertex $v_{k+1}$ is the only 2-colored vertex whose neighbors are all colored 1. Thus, $v_{k+1}$ is fixed. Moreover, $v_2$ is colored 3 and all its neighbors are colored 1. Switch the color of $v_2$ to 2.  This moves makes the neighborhood of $v_1$ distinct from the other neighbors of $v_{k+1}$ while the coloring remains distinguishing with respect to the dihedral group. Moreover, we have $v_1v_{k+1}$ as a fixed edge. If $\ell$ is even and $\ell \neq 0$ (previously referred as SUBCASE 4), the vertex $v_{k+1}$ is fixed by virtue of being the only 3-colored vertex with exactly two neighbors colored 2 and the others colored 1. Moreover, since $v_1$ is uniquely colored among the neighbors of $v_{k+1},$ we thus have $v_{k+1}v_1$ as a fixed edge. If $\ell$ is odd (previously referred as SUBCASES 2 and 3), the vertex $v_{k+r+2}$ is colored 1 with all its neighbors colored 2 and vertex $v_{k+r+1}$ is colored 2 with exactly 3 neighbors colored 3 and the other one, $v_{k+r+2},$ colored 1. Fix vertex $v_{k+r+1}$ by switching the color $v_{k+r+2}$ from 1 to 3.  The switch makes $v_{k+r+1}$ the only 2-colored vertex whose neighbors are all colored 3. Most importantly, the coloring remains distinguishing with respect to the dihedral group. Moreover, $v_{k+r}$ has a distinct neighborhood among the neighbors of $v_{k+r+1}.$ Therefore, the edge $v_{k+r}v_{k+r+1}$ is fixed.

\bigskip

\textsc{Case: $n$ is odd and $k$ is even (referencing Theorem 7.7 in Section 7.5)}

\medskip

\textsc{Subcase 1: $n \geq 3k$ (referencing Theorem 7.7 Case 3)}

\medskip

Let $n= qk+r,$ where $q \geq 3.$ Since $n$ is odd and $k$ is even, $r$ is also odd and $r\neq 0.$ 

First, suppose that $q$ is even. Consider the coloring $C$ given in the case of its dihedral analogue; which is the case where the block $B_1$ is preceded by a block $B_3.$ If $r < k-1,$ then the vertex $v_1$ and $v_2$ are labeled 3 and 1, respectively. Switch the color of $v_1$ to 1 and the color of $v_2$ to 2. Note that the modified $C$ is still proper. Furthermore, the moves makes $v_1$ a vertex labeled 1 with all its neighbors colored 2. Thus, $v_1$ is fixed since there exists no other vertex labeled 1 with all its neighbors labeled 3. Thus, the modified $C$ is distinguishing with respect to the dihedral group. Moreover, $v_2$ has a distinct neighborhood among the neighbors of $v_1.$ In particular, $v_2$ has three neighbors labeled 3. Therefore, the edge $v_1v_2$ is also fixed. Hence, the modified $c$ is distinguishing with respect to nondihedral group. If $r = k-1 \geq 3,$ the vertex $v_{qk+1}$ (first vertex in block $B'$) is colored 2 with all its neighbors colored 1. Now, fix vertex $v_{qk+1}$ by switching the colors of all possible like-vertices (a vertex that is labeled 2 with all neighbors labeled 1) to 3; for example, such vertex can be found in a Block $B_3$ squeezed between two blocks $B_2.$ Therefore, the modified $C$ is distinguishing with respect to the dihedral group. Moreover, the vertex $v_{qk+2}$ has a unique neighborhood among the neighbors of $v_{q+1}.$ In particular, at least three of its neighbors are labeled 2. Therefore, the edge $v_{q+1}v_{q+2}$ is fixed. Hence, the modified $C$ is distinguishing with respect to nondihedral group.

Now suppose that $q$ is odd. Consider the coloring $C$ given in the case of its dihedral analogue; which is the case where the block $B_1$ is preceded by a block $B_2.$ If $q >3,$ there exists at least a block $B_3$ squeezed between two blocks $B_2$ and the first vertex of the block $B_1$, namely $v_{(q-1)k+1},$ is labeled 1 with all its neighbors labeled 2. Since no other vertex labeled 1 has the same neighborhood, $v_{(q-1)k+1}$ is thus fixed. Moreover, the vertex  $v_{qk+1}$ has a distinct neighborhood among the neighbors of $v_{(q-1)k+1}$. In particular, all the neighbors of $v_{qk+1}$ are labeled 1. Therefore, the edge $v_{(q-1)k+1}v_{qk+1}$ is fixed. Hence, $C$ is distinguishing with respect to the nondihedral group. If $q=3$ and $k \neq 4,$ the vertex $v_{k+3}$ is labeled 1 with all its neighbors labeled 3. Switch its color to 2. This moves makes $v_{k+3}$ a vertex labeled 2 with all its neighbors labeled 3. Thus, $v_{k+3}$ is fixed since there is no such other vertex and the modified $C$ is distinguishing with respect to the dihedral group. Moreover, $v_{k+4}$ has a distinct neighborhood among the neighbors of $v_{k+3}.$ More specifically, $v_{k+4}$ has exactly two neighbors labeled 2 and two neighbors labeled 1. Therefore, the edge $v_{k+3}v_{k+4}$ is fixed. Hence, the modified $C$ is distinguishing with respect to the nondihedral group. If $q=3$ and $k=4,$ then we have the exceptional graph $C_{15}(1, 4)$.

\medskip

\textsc{Subcase 2: $n=2k+r$ (referencing Section 7.7 Cases 1 and 2)}

\medskip

Suppose $\displaystyle r > k/2.$ Consider the distinguishing coloring $C$ given in the case of its dihedral analogue. Then $v_1$ is fixed since it is the only vertex colored 1 that has excatly two neighbors colored 2 (namely $v_2$ and $v_{k+1}$) and the other colored 3. Since $v_2$ is distinctly colored than $v_{k+1},$ thus $v_1v_2$ is a fixed edge. Hence, the desired result.

Suppose $\displaystyle r \leq k/2,$ which implies $k = pr+\ell.$ Consider the distinguishing coloring $C$ given in the case of its dihedral analogue. The same argument used in the case where both $n$ and $k$ are odd also works here.

\bigskip

\textsc{Case: $n$ is even and $k$ is even (referencing Theorem 7.5 in Section 7.3)}

\medskip

\textsc{Subcase 1: $n \geq 3k$ (referencing Theorem 7.5 Case 1)}

\medskip

Again, consider the coloring $C$ given in the case of its dihedral analogue. Recall that when $r>2$ and if we switch  the color of vertex $v_0$ to 3, we obtain a proper distinguishing coloring with respect to the dihedral group. Moreover, $v_{-1}$ has a distinct neighborhood among the neighbors of $v_0.$ In particular, $v_{-1}$ has three of its neighbors colored 3 while every other neighbor of $v_0$ is adjacent to at most two vertices colored 3.Therefore, the edge $v_{-1}v_0$ is fixed. Hence, the desired result. When $r=2,$ the same argument in the dihedral case holds.

\medskip

\textsc{Subcase 2: $n=2k+r$ (referencing Theorem 7.5 Case 2)}

\medskip

Suppose $r \leq k/2.$ Consider the distinguishing coloring $C$ given in the case of its dihedral analogue. With respect to $C,$ the vertex $v_{k+1}$ is fixed since it is the only vertex colored 3 that has excatly two neighbors colored 1 (namely $v_1$ and $v_{k+2}$) and the other colored 2. Moreover, $v_2$ is colored 2 and all its neighbors are colored 1. Switch the color of $v_2$ to 3.  This moves makes  $v_1$ uniquely colored among the neighbors of $v_{k+1},$ so we have $v_1v_{k+1}$ as a fixed edge.

Similarly when $r > k/2.$

Note that the case where $n$ is even and $k$ is odd is already addressed in Theorem~\ref{thm: n even k odd}.

\end{proof}

\bigskip
END OF PAPER
\bigskip

\end{document}